\newtheorem{theo}{Theorem}[section]
\newtheorem{prop}[theo]{Proposition}
\newtheorem{lemm}[theo]{Lemma}
\newtheorem{coro}[theo]{Corollary}
\newtheorem{question}[theo]{Question}
\newtheorem{rema}[theo]{Remark}
\newtheorem{Defi}[theo]{Definition}
\newtheorem{say}[theo]{}
\renewcommand{\a}[0]{{\mathbb A}}
\newcommand{\qtq}[1]{\quad\mbox{#1}\quad}
\newcommand{\cbs}[0]{\operatorname{CBS}}
\date{}
\title{Flat pushforwards of Chern classes and   the  smoothability of  cycles below the middle dimension}
\author{J\'anos Koll\'ar\footnote{Partial  financial support   was provided  by  the NSF under grant number  DMS-1901855}, Claire Voisin\footnote{The author is supported by the ERC Synergy Grant HyperK (Grant agreement No. 854361).}}
\begin{document}
\maketitle

\begin{abstract} We prove in this paper the smoothability of cycles modulo rational equivalence below the middle dimension, that is, when the dimension is strictly smaller than the codimension. We  introduce and study the class  of cycles obtained as  ``flat pushforwards of Chern classes" (or equivalently, flat pushforwards of products of divisors) and prove that they are smoothable below the middle dimension.  Our main result is  that all cycles (of any dimension) on a smooth projective variety are flat pushforwards of Chern classes. In the case of abelian varieties, one can even restrict to smooth pushforwards of Chern classes.
 \end{abstract}
 \section{Introduction}
  Let $X$ be a smooth projective variety of dimension $n$. Following \cite{hartshorne}, we will say that a cycle class $z\in{\rm CH}_d(X)$ is smoothable if it belongs to the subgroup of  $\rm{CH}_d(X)$  generated by the classes of  $d$-dimensional smooth subvarieties of $X$. A number of non-smoothability results have been proved above the middle dimension, that is, when $2d\geq n$, since the question of smoothability was first  raised by Borel and Haefliger \cite{borelhaefliger} for cohomology classes. Hartshorne, Rees  and Thomas \cite{hartshorne} proved the  non-smoothability of the second Chern class $c_2(\mathcal{E})$ of the tautological subbundle $\mathcal{E}$ on a Grassmannian $G(3,n),\,n\geq6$. Debarre \cite{debarreabvar} proved the non-smoothability of the minimal class $\frac{\theta^2}{2}$ on a very general Jacobian of curve of genus $\geq 7$, where $\theta $ is the class of a Theta-divisor. This class of  examples has been greatly expanded in \cite{BenoistDebarre}. Benoist \cite{benoist}  exhibits examples of nonsmoothable $d$-cycles on varieties of dimension $n$ for many possible pairs $(d,n)$ above the middle dimension, including the case  where $2d=n$, under some arithmetic condition on the codimension $c=n-d$. When $2d\geq n+2$, a big question which remains completely open despite these counterexamples concerns the smoothability of cycles with $\mathbb{Q}$-coefficients.

   When $2d-1\leq n$,  Kleiman proves in  \cite{kleiman}  that
  for any cycle $z\in {\rm CH}_d(X)$ of codimension $c$, the cycle $(c-1)!z$ is smoothable. (A similar result in the range $2d<n$ already appeared  in \cite{hironaka}, but  Hironaka  acknowledges there  the help of Kleiman.) We also mention a related result by Sumihiro \cite[Theorem 3.2]{sumihiro} saying that for any cycle $z\in {\rm CH}^c(X)$, the cycle $(c-1)!z$ becomes  smoothable after pull-back under a flat base-change $X'\rightarrow X$, with $X'$ smooth. (Note that there is no condition on $c$ in this statement.)
For the cycles themselves (as opposed to a multiple), Hironaka  proved in 1968 the following result.
 \begin{theo}\label{theohironaka} (Hironaka \cite{hironaka}) Cycles of dimension $d\leq 3$ are smoothable on smooth varieties of dimension $n>2d$.
 \end{theo}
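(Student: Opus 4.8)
The plan is to reduce the statement, via linearity of $\mathrm{CH}_d(X)$ and resolution of singularities, to a question about pushforwards of fundamental classes of \emph{smooth} sources, and then to smooth such a pushforward by a general-position argument whose feasibility is governed precisely by the inequality $n>2d$. First I would note that it suffices to treat a single irreducible subvariety $V\subset X$ of dimension $d$, since the smoothable classes form a subgroup. Applying resolution of singularities (in characteristic zero) to $V$, I obtain a smooth projective $d$-fold $Y$ together with a birational morphism $\pi\colon Y\to V$; writing $f=i_V\circ\pi\colon Y\to X$ for the composite with the inclusion, the projection formula gives $f_*[Y]=i_{V*}\pi_*[Y]=[V]$ in $\mathrm{CH}_d(X)$, because $\pi$ has degree one. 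The problem thus becomes: given a generically injective morphism $f\colon Y\to X$ from a smooth projective $d$-fold, with $n>2d$, show that $f_*[Y]$ is smoothable.

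The geometric reason to expect success is a dimension count: the locus in $Y\times Y$ where $f$ fails to be injective, together with the ramification locus where $f$ fails to be an immersion, has expected dimension $2d-n$, which is strictly negative exactly when $n>2d$. So a ``generic'' map in the appropriate sense would already be an embedding, and $f_*[Y]$ would be the class of a smooth subvariety. To turn this heuristic into actual rational equivalences I would proceed as follows. Choose a very ample linear system embedding $Y\hookrightarrow\mathbb{P}^{2d+1}$ as a smooth subvariety (possible for any smooth projective $d$-fold), and form the graph-type embedding $Y\hookrightarrow X\times\mathbb{P}^{2d+1}$, $y\mapsto(f(y),\iota(y))$, with smooth image $\Gamma$ of dimension $d$ satisfying $\mathrm{pr}_{X*}[\Gamma]=[V]$. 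The idea is then to move $\Gamma$ within its rational equivalence class in $X\times\mathbb{P}^{2d+1}$ so that the projection to $X$ becomes generically injective with smooth image; here the relevant auxiliary loci are the secant and tangent varieties of $Y$, of dimensions at most $2d+1$ and $2d$ respectively, which control when projections preserve embeddings, and the inequality $n\ge 2d+1$ provides exactly the room needed on the $X$ side.

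I expect the main obstacle to lie precisely here, because morphisms into a \emph{fixed} target $X$ do not deform as freely as maps to projective space, so one cannot simply perturb $f$ into general position. One is forced to analyze the singularities of the image and of the various multiple-point schemes produced by the construction, and to dispose of the resulting \emph{correction cycles}, which have dimension $<d$, by an induction on $d$: since $n>2d$ implies $n>2d'$ for every $d'<d$, the inductive hypothesis applies to them, with the trivial cases $d=0,1$ serving as the base. It is in the control of these singularities and correction cycles that resolution of singularities enters in an essential, quantitative way, and this is what confines the unconditional statement to small dimension $d\le 3$; carrying the same scheme through in higher dimension is exactly the difficulty that the present paper's ``flat pushforward of Chern classes'' technique is designed to circumvent.
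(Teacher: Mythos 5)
Your reduction---resolve $V$, obtain a generically injective $f\colon Y\to X$ from a smooth $d$-fold, and embed $Y$ as a graph $\Gamma\subset X\times\mathbb{P}^{2d+1}$ over $X$---is exactly the setup the paper itself describes in its introduction, and up to that point the argument is sound. The gap is the sentence ``move $\Gamma$ within its rational equivalence class \ldots\ so that the projection to $X$ becomes generically injective with smooth image'': that \emph{is} the theorem, and you give no mechanism for producing such a move. The Chow moving lemma yields a rationally equivalent cycle in general position with respect to the flat projection $X\times\mathbb{P}^{2d+1}\to X$ (which, by Proposition \ref{letranswithflat}, would indeed force smooth images in $X$ when $n>2d$), but it does not keep the components smooth; as the paper notes, smoothness of the moved components fails from dimension $4$ on, and even for $d\le 3$ it requires a genuine argument. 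Your secant/tangent-variety projection only re-embeds $Y$ in a smaller projective space; it does nothing about the fixed map to $X$. Finally, ``disposing of correction cycles of dimension $<d$ by induction on $d$'' is not a proof: you never identify the correction cycles, never show the defect is supported in lower dimension, and---crucially---never use the hypothesis $d\le 3$. An induction with trivial base cases and no quantitative input cannot detect that bound.

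For comparison, the paper obtains this statement from Theorem \ref{theo012} combined with Proposition \ref{proflatpushf}. The actual mechanism is a codimension reduction: embed the resolution $\widetilde Z$ in $X'=X\times\mathbb{P}^m$, take a general smooth complete intersection $Y\supset \widetilde Z$ of dimension $2d$, then a general ample hypersurface $Y'\subset Y$ through $\widetilde Z$, which by Lemma \ref{letroptropfacile} has only finitely many ordinary quadratic singularities; after blowing up a $0$-dimensional complete intersection containing them, the proper transform of $\widetilde Z$ has codimension $d-1$ in the smooth $\widetilde{Y'}$, so formula (\ref{eqformulap1fact}) exhibits $(d-2)!$ times its class as a Chern class, and $(d-2)!=1$ precisely for $d\le 3$. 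The class is then pushed back to $X$ while staying in ${\rm CH}(\cdot)_{\rm fl_*Ch}$ via Propositions \ref{prohypersurafce}, \ref{coronewdu1709} and \ref{problowup}, and smoothed by the general-position argument of Proposition \ref{letranswithflat} applied to a flat morphism. It is this passage to an auxiliary $2d$-dimensional ambient variety together with the Riemann--Roch coefficient $(c-1)!$ that produces the constraint $d\le 3$---the ingredient entirely missing from your sketch.
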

  We study in this paper  the smoothability problem for  cycles modulo rational equivalence below the middle dimension, which is mentioned in the introduction of \cite{benoist}, and our first main result is
 \begin{theo}\label{conjsmoothing}  Let $X$ be a smooth  projective variety of dimension $n$, defined over a field of characteristic $0$. Then for any integer  $d$ such that $2d<n$, any cycle $z\in \rm{CH}_d(X)$ is smoothable. \end{theo}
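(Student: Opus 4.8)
The plan is to deduce the statement from the two constructions announced in the abstract. First, that every class $z\in\mathrm{CH}_d(X)$ is a flat pushforward of a product of divisors: there is a smooth projective $Y$ of dimension $m$, a flat proper surjection $\pi\colon Y\to X$, and divisor classes $D_1,\dots,D_r$ on $Y$ with $r=m-d$ and $z=\pi_*(D_1\cdots D_r)$. Second, that any such flat pushforward is smoothable once $2d<n$. Since the smoothable classes form, by definition, a subgroup of $\mathrm{CH}_d(X)$, combining the two yields the theorem immediately. I expect essentially all the difficulty to lie in the first (the existence of a \emph{flat} realization), so I organize the sketch around isolating that point.

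For the smoothing step I would first reduce to products of very ample divisors: writing $D_i=A_i-B_i$ with $A_i,B_i$ very ample and expanding $D_1\cdots D_r$ multilinearly, $z$ becomes a $\mathbb Z$-linear combination of terms $\pi_*(L_1\cdots L_r)$ with all $L_j$ very ample, and it suffices to smooth each such term. By Bertini in characteristic $0$ a general complete intersection $Z=H_1\cap\cdots\cap H_r$ with $H_j\in|L_j|$ is smooth of dimension $d$ and represents $L_1\cdots L_r$, so $\pi_*(L_1\cdots L_r)=\pi_*[Z]$. It then remains to show that, for general $Z$, the map $\pi|_Z\colon Z\to X$ is a closed embedding, for then $\pi(Z)$ is a smooth $d$-dimensional subvariety and $\pi_*[Z]=[\pi(Z)]$. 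This is where the hypothesis enters. Because $\pi$ is flat, $Y\times_X Y$ is pure of dimension $2m-n$, so the complement of the diagonal meets $Z\times_X Z$ in expected dimension $2m-n-2(m-d)=2d-n<0$; a standard incidence count, using that a very ample $L_j$ separates points, shows this intersection is empty for general $H_j$, giving injectivity. An analogous count on $\mathbb P(T_{Y/X})$ over the smooth locus of $\pi$ forces $T_zZ\cap\ker d\pi_z=0$, so $\pi|_Z$ is an immersion there; a stratified version of the count over the non-smooth locus of $\pi$, again driven by $2d-n<0$, handles the remaining points. Injective, immersive and proper then gives the closed embedding.

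The heart of the matter, and the step I expect to be hardest, is the flat realization $z=\pi_*(D_1\cdots D_r)$. By linearity it suffices to treat $z=[W]$ for an irreducible $W\subset X$ of codimension $c=n-d$. A first, non-flat model is already instructive: if $\tau\colon B\to X$ is the blow-up of $X$ along $W$ (smooth, say, the general case following after resolving $W$), with exceptional divisor $E$, then $[W]=\pm\,\tau_*(E^{c})$, which exhibits $[W]$ as a pushforward of a product of divisors. But $\tau$ is very far from flat, its fibres jumping from points to $\mathbb P^{c-1}$ over $W$; correspondingly $B\times_X B$ is not of pure dimension $2\dim B-n$ and its off-diagonal part is enormous over $W$, which is exactly the behaviour the smoothing step above cannot tolerate. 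The content of the main result is therefore to produce a genuinely flat $\pi$ with the same pushforward.

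I would attempt this by allowing $Y$ to have large relative dimension and by realizing $W$ (after pullback to $Y$) as the zero locus of a section of a vector bundle, so that its class becomes a top Chern class while the map to $X$ stays equidimensional: concretely, one seeks a flat $\pi\colon Y\to X$ and a bundle $\mathcal E$ on $Y$ with a section whose zero scheme maps birationally onto $W$ and whose other components push forward to $0$. Equivalently, one must flatten the blow-up model while keeping the base equal to $X$, trading the inhomogeneity of $\tau$ for extra fibre dimensions. Guaranteeing simultaneously the flatness of $\pi$, the smoothness of $Y$, and that the pushforward is exactly $[W]$ with no spurious contributions is the genuine obstacle, and I expect it to require the full strength of the paper's main theorem rather than a short argument.
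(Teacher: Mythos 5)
Your proposal follows exactly the paper's route: Theorem \ref{conjsmoothing} is deduced by combining the flat-pushforward realization of every cycle (Theorem \ref{theo0cyclesflpsh}) with the Whitney-type smoothing of flat pushforwards of products of divisors below the middle dimension (Propositions \ref{proflatpushf} and \ref{letranswithflat}), and your dimension counts on $Y\times_X Y$ and on the projectivized kernel of $d\pi$, including the stratification over the non-smooth locus of $\pi$, are precisely the ones the paper uses. The flat-realization step that you defer is indeed the paper's main theorem (proved there via the cbs resolution Theorem \ref{theocbsreso} and the stability results of Theorem \ref{theopourfilt}, rather than via a single vector-bundle section on a flat $Y$ as you speculate), so your deduction is correct and essentially identical to the paper's.
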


  Note that this result is related to   the easy case of  Whitney's embedding  theorem in differential topology, thanks to Hironaka's resolution theorem  \cite{hironakareso}. Indeed, for any $d$-dimensional subvariety $Z\subset X$, we can resolve the singularities of $Z$ and get a proper  morphism $\tilde{j}:\widetilde{Z}\rightarrow X$ such that $\tilde{j}_*[\widetilde{Z}]=[Z]$ in ${\rm CH}_d(X)$.  Our statement  is that, if $2d<n$, we can replace modulo rational equivalence $\tilde{j}:\widetilde{Z}\rightarrow X$ by an integral combination of embeddings $j_i:Z_i\hookrightarrow X$ of smooth subvarieties. The difficulty is the following : We can of course  embed $\widetilde{Z}$ in $X\times \mathbb{P}^n$ over $X$, and then, by an easy projection argument, we see that it suffices to construct a cycle $Z'=\sum_in_iZ'_i$  in $X\times \mathbb{P}^n$, which is rationally equivalent to $\widetilde{Z}$, and such that the $Z'_i$ are both smooth and in general position. The Chow moving lemma provides such a cycle in general position but unfortunately the $Z'_i$ are not smooth starting from dimension $4$.

\begin{rema}{\rm In \cite[Theorem 0.3]{benoist}, Benoist produces,  for infinitely many values of $d$, examples of cycles of dimension $d$ and codimension $d$ on smooth projective varieties over $\mathbb{C}$,  that are not smoothable. Theorem \ref{conjsmoothing} shows that, at least for these values of $d$,  his examples are optimal and   the  condition $2d<n$ is necessary for smoothability.}
\end{rema}

Theorem \ref{conjsmoothing} is obtained as a consequence of a more general structure result for algebraic cycles of any dimension  that we now describe. For any   smooth variety $X$, we denote by
 ${\rm CH}^*(X)_{\rm  Ch}\subset {\rm CH}^*(X)$ the subring generated by Chern classes of vector bundles on $X$.  The standard formula (\ref{eqformulap1fact}) combined with locally free resolutions shows that $$(c-1)!{\rm CH}^c(X)\subset  {\rm CH}^c(X)_{\rm Ch}.$$ In  particular we have  ${\rm CH}^*(X)\otimes \mathbb{Q}={\rm CH}^*(X)_{\rm Ch}\otimes \mathbb{Q}$.
 However, it is well-known that ${\rm CH}^*(X)_{\rm Ch}$ can be a proper subring of ${\rm CH}^*(X)$. We refer to  \cite{debarre} for an explicit example and to Lemma \ref{leexample} for another example. As discussed in Sections \ref{secexamples}, further examples are provided by homogeneous varieties $G/H$, where $G$ is  a semi-simple, simply connected group, $H$ is a Borel subgroup and the torsion order of $G$ is not $1$ (we are grateful to Burt Totaro for explaining this to us).
 Cycles of Chern type, that is, elements of  ${\rm CH}_d(X)_{\rm Ch}$, are relevant for our subject, since we know that they  are smoothable under the Whitney condition $2d<{\rm dim}\,X$.  (We will give in Section \ref{secwhitney} an argument which involves Segre classes and seems slightly different from Kleiman's and Hironaka's arguments.)

  For the purpose of this paper, let us now introduce further definitions that  will be crucial for the proof  (the notation is a bit heavy, we welcome a better suggestion).
 \begin{Defi} \label{defiflpshch} Let $X$ be smooth. We will denote by ${\rm CH}(X)_{\rm fl_*Ch}$ (for ``flat pushforward of  Chern classes"), resp.  ${\rm CH}(X)_{\rm sm_*Ch}$ (for ``smooth pushforward of   Chern classes") the subgroup generated by cycles of the form $\pi_* z'$ for a flat, resp. smooth, proper morphism $\pi: P\rightarrow X$, with $P$ smooth, and for some cycle $z'\in {\rm CH}(P)_{\rm Ch}$.
\end{Defi}
 Lemma  \ref{propoursegrechern} proved in Section \ref{secdefi} says that  ${\rm CH}^*(X)_{\rm  Ch}\subset {\rm CH}^*(X)$ is generated by smooth pushforwards of classes of complete intersections of divisors. it follows that, in Definition  \ref{defiflpshch}, we could replace  ``cycle in ${\rm CH}(P)_{\rm Ch}$'' by ``cycle in the subring of ${\rm CH}^*(P)$ generated by  divisors'' (see Remark \ref{remaintdiv}). Further easy properties are discussed in Section \ref{secdefi}.

We prove  in Section \ref{secwhitney} the following basic
\begin{prop} \label{proflatpshintro} (Cf. Proposition \ref{proflatpushf})   Cycles  in ${\rm CH}_d(X)_{\rm fl_*Ch} $ are smoothable when  $2d<{\rm dim}\,X$.
\end{prop}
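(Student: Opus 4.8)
The plan is to reduce a general generator $\pi_*z'$, with $\pi\colon P\to X$ flat and proper, $P$ smooth (and, as we may assume, $\pi$ projective so that $P$ carries very ample divisors), and $z'\in {\rm CH}_d(P)_{\rm Ch}$, to the case where $z'$ is represented by a single smooth subvariety of $P$, and then to show that the \emph{fixed} map $\pi$ restricts to a closed embedding on that subvariety. Write $r=\dim P-n$ for the relative dimension of $\pi$, so that all fibres of $\pi$ have pure dimension $r$ and $2d<n\le n+r=\dim P$. First I would apply Lemma \ref{propoursegrechern} together with Remark \ref{remaintdiv} to write $z'=\sum_j n_j\,\rho_{j*}(w_j)$, where $\rho_j\colon Q_j\to P$ is smooth and proper with $Q_j$ smooth and $w_j$ is a product of divisor classes on $Q_j$. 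Since $\rho_j$ is smooth, hence flat, the composite $\pi\circ\rho_j$ is again flat and proper with smooth source, and $\pi_*z'=\sum_j n_j\,(\pi\circ\rho_j)_*(w_j)$. This reduces us to the case $z'=D_1\cdots D_k$, a product of $k=\dim P-d$ divisor classes on $P$.

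Next I would make $z'$ effective and smooth. Writing each $D_i=A_i-B_i$ as a difference of very ample classes and expanding the product by multilinearity, $z'$ becomes a $\mathbb{Z}$-combination of complete intersections of $k$ very ample divisors, each of dimension $d$. By Bertini, a general such complete intersection $Y\subset P$ is smooth of dimension $d$ and represents its class $A_1\cdots A_k$. We are thus reduced to proving that $\pi_*[Y]$ is smoothable for $Y$ a general complete intersection of very ample divisors $H_1,\dots,H_k$ on $P$, with $\dim Y=d$ and $2d<n=\dim X$.

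The heart of the argument is to show that, for a general choice of the $H_i$, the restriction $\pi|_Y\colon Y\to X$ is a closed embedding; granting this, $\pi(Y)$ is a smooth $d$-dimensional subvariety of $X$ and $\pi_*[Y]=[\pi(Y)]$, so that $\pi_*z'$ is smoothable. Injectivity of $\pi|_Y$ is a clean dimension count that is insensitive to the singularities of the fibres of $\pi$: on $W:=(P\times_X P)\setminus\Delta_P$, which has dimension at most $n+2r$ by flatness, the condition that a very ample $H_i$ pass through both points of a pair $(y_1,y_2)$ with $y_1\ne y_2$ is of codimension $2$ in $|A_i|$, since $A_i$ separates points. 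Forming the incidence variety inside $W\times\prod_i|A_i|$ and projecting to $\prod_i|A_i|$, the fibre over a pair has codimension $2k$, and $2k=2(n+r-d)>n+2r$ is exactly $n>2d$; hence the projection is not dominant and a general $(H_1,\dots,H_k)$ forces $Y\times_X Y$ to meet $W$ in the empty set, i.e.\ $\pi|_Y$ is injective. An entirely analogous first-order incidence, using that $A_i$ separates tangent vectors, is meant to handle the unramifiedness of $\pi|_Y$, the numerics again being governed by $2d<n$.

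The main obstacle I anticipate is precisely this last, unramifiedness step, and it is here that the flatness of $\pi$, rather than smoothness, bites. The fibres of $\pi$ may be singular, so the ``vertical'' spaces $\ker d\pi_y$ jump in dimension along the critical locus of $\pi$, and the relevant incidence is no longer a clean projective bundle over $P$ (as it would be for a smooth $\pi$, which is the situation underlying the Segre-class argument for ${\rm CH}_*(X)_{\rm Ch}$). To make the count go through one must stratify $P$ by the rank of $d\pi\colon T_P\to\pi^*T_X$ and verify, stratum by stratum, that the expected-dimension estimate for the tangency locus survives the jumps of $\ker d\pi$; the inequality needed on each stratum reduces to $2d<n$ together with bounds on the dimension of the degeneracy loci of $d\pi$. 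Controlling these loci for an arbitrary flat $\pi$ is the one delicate point, and I expect it to absorb most of the work; once it is in place, $\pi|_Y$ is a closed embedding and the smoothability of $\pi_*z'$ follows.
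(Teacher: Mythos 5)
Your overall strategy is the same as the paper's: use Lemma \ref{propoursegrechern} (with Remark \ref{remaintdiv}) to reduce a generator $\pi_*z'$ to the pushforward, under a single flat proper map $\pi$ from a smooth projective source $P$, of a product of divisor classes; write each divisor as a difference of very ample classes; and then show that the \emph{fixed} flat map $\pi$ restricts to a closed embedding on a general $d$-dimensional very ample complete intersection. Your injectivity count is exactly the paper's: flatness gives $\operatorname{codim}\bigl((P\times_XP)\setminus\Delta_P\subset P\times P\bigr)=n$, and $2d<n$ makes the relevant incidence non-dominant. This is the content of Proposition \ref{letranswithflat} combined with Remark \ref{rema1}, which is how the paper proves Proposition \ref{proflatpushf}.

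The step you leave open --- controlling the degeneracy loci of $d\pi$ so that the first-order (unramifiedness) incidence count survives --- is the one genuine gap, but it closes in a few lines rather than absorbing most of the work. Let $m=\dim P-n$ and let $P_k\subset P$ be the locally closed stratum where $d\pi$ has rank $k$. In characteristic $0$ the algebraic Sard theorem gives $\dim\pi(P_k)\le k$, and flatness forces every fiber of $\pi$ to have dimension $m$, hence $\dim P_k\le m+k$. Since $\ker d\pi$ has rank $m+n-k$ along $P_k$, the locus $W=\bigsqcup_k\mathbb{P}\bigl(\ker d\pi|_{P_k}\bigr)\subset\mathbb{P}(T_P)$ has dimension at most $(m+k)+(m+n-k)-1=2m+n-1$, i.e.\ codimension at least $n$ in $\mathbb{P}(T_P)$; the same general-position count as for injectivity, now applied to $\mathbb{P}(T_Y)$ of dimension $2d-1<n$ (Lemma \ref{lemtranswhitney}(iii)), shows a general complete intersection $Y$ avoids $W$, so $\pi|_Y$ is an immersion, hence (being proper and injective) a closed embedding. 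Note that characteristic $0$ is used essentially here: without it the bound $\dim\pi(P_k)\le k$ can fail. With this inserted, your argument is complete and coincides with the paper's.
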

The analogous result for cycles   in ${\rm CH}_d(X)_{\rm sm_*Ch} $ is a standard  statement.
 Proposition \ref{proflatpshintro}  is our motivation to introduce Definition \ref{defiflpshch}. Our second main theorem  is
 the following
\begin{theo}\label{theo0cyclesflpsh}  Let $X$ be a smooth projective variety over a field of characteristic $0$. Then
\begin{eqnarray}\label{eqzeroflpsh}   {\rm CH}(X)=   {\rm CH}(X)_{\rm fl_*Ch}.
\end{eqnarray}
More precisely, any cycle on a smooth projective variety $X$ over a field of characteristic $0$ is obtained as a  pushforward of intersections of divisors on a smooth projective (nonnecessarily connected) variety $Y$ under a flat morphism $f:Y\rightarrow X$.
\end{theo}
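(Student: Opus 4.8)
The statement is additive in the cycle, so it suffices to treat the class $[Z]$ of a single integral subvariety $Z\subset X$ of codimension $c$. The plan is to first reduce, via resolution and a projective bundle, to the case of a \emph{smooth} subvariety, and then to realize the class of a smooth subvariety with coefficient \emph{exactly one} as a flat pushforward of a product of divisors. By Hironaka's resolution theorem (available since we are in characteristic $0$), choose a smooth projective $W$ and a proper morphism $g\colon W\to X$ birational onto $Z$, so that $g_*[W]=[Z]$ in ${\rm CH}(X)$. The map $g$ is very far from flat, and the entire difficulty is to replace it by a flat map while keeping the pushforward class equal to $[Z]$ and exhibiting the relevant cycle as a product of divisors.

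First I would embed $W$ into a projective bundle over $X$. Choosing a sufficiently positive vector bundle $E$ on $X$ of large rank, one realizes $W$ as a closed smooth subvariety of $P:=\mathbb{P}(E)$ lying over $X$, with $\pi|_W=g$, where $\pi\colon P\to X$ is the bundle projection. Since $\pi$ is smooth (hence flat) and $\pi_*(\iota_*[W])=g_*[W]=[Z]$ for $\iota\colon W\hookrightarrow P$, and since smooth morphisms and proper pushforwards compose, it suffices to exhibit $\iota_*[W]\in{\rm CH}(P)$ as a flat pushforward to $P$ of a product of divisors. The gain is decisive: although the ambient problem looks unchanged, $W$ is now \emph{smooth} inside the smooth variety $P$, hence a local complete intersection with locally free normal bundle. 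Renaming $P$ as $X$, I have reduced Theorem~\ref{theo0cyclesflpsh} to the case where $Z=W$ is a smooth subvariety of $X$, of codimension which I again call $c$.

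The heart of the argument is then to realize $[W]$ with coefficient one. The cleanest scenario is when $W$ is the transverse zero locus of a regular section of a vector bundle $\mathcal{F}$ of rank $c$ on $X$: then $[W]=c_c(\mathcal{F})\in{\rm CH}(X)_{\rm Ch}$, and by Lemma~\ref{propoursegrechern} together with Remark~\ref{remaintdiv} this top Chern class is already a smooth pushforward of a complete intersection of divisors, which finishes the proof. Building such a pair $(\mathcal{F},s)$ from the normal bundle $N_{W/X}$ is a generalized Serre construction, governed by an obstruction that can be nonzero in codimension $\geq 3$. This is exactly where smooth pushforwards fail and where the flatness allowed in Definition~\ref{defiflpshch} becomes essential: the obstruction is to be killed by passing to a finite flat cover $f\colon Y\to X$ with $Y$ smooth, on which the lifted subvariety, still mapping birationally to $Z$, \emph{does} become the zero locus of a section; its class is then a genuine Chern class on $Y$, and $f_*$ of it equals $[Z]$. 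A more hands-on variant of the same idea is liaison: one includes $W$ as a component of a complete intersection $D_1\cap\cdots\cap D_c$ of very ample divisors, so that $D_1\cdots D_c=[W]+[W']$ in the Chow ring, whence $[W]=D_1\cdots D_c-[W']$, reducing to the residual class $[W']$, to be handled by induction.

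The main obstacle, in either route, is precisely the removal of the universal factor $(c-1)!$ appearing in the inclusion $(c-1)!\,{\rm CH}^c(X)\subset{\rm CH}^c(X)_{\rm Ch}$, that is, attaining the \emph{exact} coefficient one rather than a multiple. In the zero-locus approach this is the vanishing, after a suitable flat cover, of the higher Serre-type obstruction; in the liaison approach it is the termination of the induction, i.e. controlling the residual $W'$ so that it eventually becomes Chern-type, or again birationally a zero locus, on a smooth variety flat over $X$. Flat but non-smooth morphisms are indispensable here: the fact that for a general $X$ one can only guarantee a flat (rather than smooth) pushforward, while for abelian varieties the extra homogeneity upgrades this to a smooth pushforward, is a direct reflection of the failure of the naive zero-locus construction over $X$ itself and of the need to spread the class out over a finite flat cover in order to reach coefficient one.
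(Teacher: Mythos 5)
Your opening reduction (resolve $Z$, embed the resolution into a projective bundle over $X$, and use that the bundle projection is smooth and proper pushforwards compose) matches the paper's first step, and you correctly isolate the real difficulty: realizing the class of a smooth $W\subset X$ with coefficient exactly $1$ rather than $(c-1)!$. But at precisely that point the proposal stops being a proof. Both mechanisms you offer are left as hopes, and neither is likely to work as stated. For the Serre-construction route, you assert that the obstruction to writing $W$ as the transverse zero locus of a section of a rank-$c$ bundle ``is to be killed by passing to a finite flat cover $f\colon Y\to X$ with $Y$ smooth''; no such cover is constructed, and there is a basic tension: for finite flat $f$ of degree $\delta$ the full preimage of $W$ pushes forward to $\delta[W]$, so you would need a distinguished component of $f^{-1}(W)$ mapping birationally onto $W$ that moreover becomes a complete bundle-section upstairs --- a claim at least as hard as the theorem itself, and not the mechanism the paper uses. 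For the liaison route, the residual $W'$ in $D_1\cap\cdots\cap D_c=W\cup W'$ has the same dimension and codimension as $W$ and is in general more singular, so there is no decreasing parameter; you acknowledge this (``the termination of the induction'') but do not resolve it.

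The missing content is the paper's actual mechanism, which is of a different nature. It does not look for one flat cover of $X$; it proves (Theorem \ref{theocbsreso}) that when ${\rm dim}\,X>4\,{\rm dim}\,Z$ a smooth $Z$ becomes a connected component of a smooth complete bundle-section after a sequence of blow-ups whose centers are themselves smooth complete bundle-sections, and separately proves (Theorem \ref{theopourfilt}) that ${\rm CH}(\cdot)_{\rm fl_*Ch}$ is stable under pushforward along exactly these \emph{non-flat} maps: finite morphisms onto a smooth variety of one dimension more, inclusions of smooth complete bundle-sections, and blow-downs with such centers. The engine behind that stability is the graph construction of Proposition \ref{prohypersurafce}: blowing up the graph $\Gamma_j\subset Y\times X$ of a finite morphism $j$ with ${\rm dim}\,Y={\rm dim}\,X-1$ produces a variety $T$ that is smooth over $Y$ and flat over $X$, which converts $j_*$ of a product of divisors into a genuinely flat pushforward from a fibre product. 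Nothing in your proposal plays the role of either the cbs resolution theorem or this flatness trick, so the coefficient-one step --- the whole point of the theorem --- remains unproved.
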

\begin{rema}{\rm Equality (\ref{eqzeroflpsh}) combined with Remark \ref{remaintdiv} a priori just says that any cycle on a smooth projective variety $X$ over a field of characteristic $0$ belongs to the subgroup generated by  pushforwards of intersections of divisors on  smooth projective varieties $Y_i$ under flat morphisms $f_i:Y_i\rightarrow X$. However, by allowing disjoint unions (and possibly taking products with $\mathbb{P}^{r_i}$ if we want that the  disjoint union $Y=\sqcup Y_i$ is equidimensional), we find that  pushforwards of intersections of divisors on a smooth projective (nonnecessarily connected) variety $Y$ under a flat morphism $f:Y\rightarrow X$ form a group. Thus the second statement is in fact implied by (\ref{eqzeroflpsh}).}
\end{rema}

Theorem \ref{conjsmoothing} follows from  Theorem \ref{theo0cyclesflpsh} and Proposition \ref{proflatpshintro}.

The  first step in the proof of  Theorem \ref{theo0cyclesflpsh} is  the following Theorem \ref{theopourfilt} proved in Section \ref{sectheoles3prop}, which establishes stability properties of ${\rm CH}(\cdot)_{\rm fl_*Ch}$  under certain non-flat  pushforwards. As there does not seem to exist a standard terminology, we will use here and in the rest of the paper the term {\it complete bundle-section in $X$}, for  ``closed  algebraic subset of codimension $c$ which is the  zero-set  of  a section  of  a vector bundle of rank $c$ on $X$''. We welcome again a better suggestion.

\begin{theo}\label{theopourfilt} (i) (Cf. Proposition \ref{prohypersurafce}.) Given a finite morphism  $j:Y\rightarrow X$, where $Y$ and $X$ are  smooth projective and ${\rm dim}\,Y={\rm dim}\,X-1$,
one has
$$j_*({\rm CH}(Y)_{\rm fl_*Ch})\subset {\rm CH}(X)_{\rm fl_*Ch}.$$

(ii) (Cf. Proposition  \ref{coronewdu1709}.) Let $X$ be a smooth projective variety and let $j:Y\hookrightarrow X$ be the inclusion of a smooth subvariety which is a complete bundle-section in $X$. Then $j_*({\rm CH}(Y)_{\rm fl_*Ch})\subset {\rm CH}(X)_{\rm fl_*Ch}$.

(iii) (Cf. Proposition \ref{problowup}.)  Let $X$ be smooth projective and let $ Y\subset X$ be  a smooth  complete bundle-section in $X$. Let $\tau: \widetilde{X}=B_YX\rightarrow X$ be the blow-up of $X$ along $Y$. Then
$$ \tau_*({\rm CH}(\widetilde{X})_{\rm fl_*Ch})\subset {\rm CH}(X)_{\rm fl_*Ch}.$$
\end{theo}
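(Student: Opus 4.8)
The plan is to deduce all three parts from a single base case by an induction on the codimension $c$, the base case being part (i) (which contains part (ii) for $c=1$). Throughout I use Remark~\ref{remaintdiv} to reduce any generator of ${\rm CH}(Y)_{\rm fl_*Ch}$ (resp. of ${\rm CH}(\widetilde X)_{\rm fl_*Ch}$) to the form $\pi_*w$, where $\pi\colon P\to Y$ is flat and proper, $P$ is smooth, and $w$ is an intersection of divisors on $P$. I also use freely the elementary stability properties of Section~\ref{secdefi}: the group ${\rm CH}(\cdot)_{\rm fl_*Ch}$ is stable under (a) proper pushforward along a flat morphism (composites of flat morphisms are flat), (b) flat pullback (fibre products with smooth varieties stay smooth, and pullbacks of Chern classes are Chern classes), and (c) multiplication by a class in ${\rm CH}^*(\cdot)_{\rm Ch}$ (projection formula). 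The unifying principle is that whenever the relevant non-flat proper morphism factors as $q\circ h$ with $q$ flat, property (a) reduces the problem to the behaviour of $h_*$.

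I would treat (iii) first, assuming (ii) in codimension $c-1$. Writing $Y=Z(s)$ for a regular section $s$ of the rank-$c$ bundle $E$, the blow-up $\tau\colon\widetilde X=B_YX\to X$ is realised over $X$ as a closed subvariety $\iota\colon\widetilde X\hookrightarrow\mathbb P_X(E)\xrightarrow{\,q\,}X$: on $\mathbb P_X(E)$ the pulled-back section $q^*s$, composed with the tautological quotient $q^*E\to Q$ (of rank $c-1$) coming from $0\to\mathcal O(-1)\to q^*E\to Q\to0$, gives a section $\bar s\in H^0(\mathbb P_X(E),Q)$ whose zero locus is exactly the smooth variety $\widetilde X$. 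Thus $\widetilde X$ is a smooth complete bundle-section of codimension $c-1$ in $\mathbb P_X(E)$, and $q$ is a projective bundle, hence flat. Since $\tau=q\circ\iota$, part (ii) in codimension $c-1$ gives $\iota_*({\rm CH}(\widetilde X)_{\rm fl_*Ch})\subset{\rm CH}(\mathbb P_X(E))_{\rm fl_*Ch}$, and then (a) yields $\tau_*=q_*\circ\iota_*$ valued in ${\rm CH}(X)_{\rm fl_*Ch}$.

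Next, (ii) in codimension $c$ follows from (iii) in codimension $c$ together with the base case, via the blow-up formula. With $\tau\colon\widetilde X\to X$ as above, exceptional divisor $p\colon\mathcal E=\mathbb P(E|_Y)\to Y$, inclusion $\iota_{\mathcal E}\colon\mathcal E\hookrightarrow\widetilde X$, and $\xi=c_1(\mathcal O_{\mathcal E}(1))$, the projection formula applied to $\tau\circ\iota_{\mathcal E}=j\circ p$, together with $p_*\xi^{c-1}=1$, gives for every $z'\in{\rm CH}(Y)$
\[ j_*z'=\tau_*\,\iota_{\mathcal E*}\bigl(\xi^{c-1}\cdot p^*z'\bigr). \]
If $z'\in{\rm CH}(Y)_{\rm fl_*Ch}$ then $p^*z'\in{\rm CH}(\mathcal E)_{\rm fl_*Ch}$ by (b) and $\xi^{c-1}\cdot p^*z'\in{\rm CH}(\mathcal E)_{\rm fl_*Ch}$ by (c). Since $\mathcal E$ is a codimension-one smooth complete bundle-section of $\widetilde X$, the base case (part (i) in the divisorial case) applies to $\iota_{\mathcal E*}$, and finally (iii) applies to $\tau_*$. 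As (iii) in codimension $c$ used only (ii) in codimension $c-1$, this closes an induction on $c$ with base the case $c=1$.

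The crux is therefore part (i), in particular the inclusion $j\colon Y\hookrightarrow X$ of a smooth divisor, which must be proved directly. Assuming $L=\mathcal O_X(Y)$ is very ample (the general case following by writing $L$ as a difference of very ample bundles), let $\mathcal D\subset X\times|L|$ be the universal divisor, so that both projections $f\colon\mathcal D\to X$ (a projective bundle) and $\rho\colon\mathcal D\to|L|=\mathbb P^N$ are flat, $\mathcal D$ is smooth, and $Y=\rho^{-1}([Y])$ is a smooth fibre with $f|_Y=j$. Then $j_*z'=f_*\iota_{Y*}z'$ with $f$ flat, so it suffices to treat $\iota_{Y*}$. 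Since $[Y]=\rho^*[\mathrm{pt}]=\eta^N$ with $\eta=c_1(\rho^*\mathcal O_{\mathbb P^N}(1))\in{\rm CH}^*(\mathcal D)_{\rm Ch}$, the projection formula gives, for any $\gamma$ on $\mathcal D$,
\[ \iota_{Y*}\iota_Y^*\gamma=\eta^N\cdot\gamma, \]
whose right-hand side lies in ${\rm CH}(\mathcal D)_{\rm fl_*Ch}$ whenever $\gamma$ does, by (c). The whole statement is thereby reduced to the surjectivity of $\iota_Y^*\colon{\rm CH}(\mathcal D)_{\rm fl_*Ch}\to{\rm CH}(Y)_{\rm fl_*Ch}$, i.e. to extending a given generator $z'=\pi_*w$ of ${\rm CH}(Y)_{\rm fl_*Ch}$ to a flat pushforward of a Chern class on $\mathcal D$ restricting to it along the fibre $Y$. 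This extension --- spreading the flat family $\pi\colon P\to Y$ out to a flat family over $\mathcal D$ with smooth total space and a compatible divisor class --- is the step I expect to be the main obstacle; I would approach it by extending the family over a Zariski-open neighbourhood of the fibre $Y$ and controlling the complement by Noetherian induction on $\dim X$. For the non-embedded case of (i) (a finite cover $Y\to X$ that is not an embedding), I would first use a generic primitive-element presentation to realise $Y$ as a relative Cartier divisor in a $\mathbb P^1$-bundle over $X$, reducing to the smooth-divisor case above up to a lower-dimensional locus handled by the same induction.
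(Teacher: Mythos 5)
Your reductions of (ii) and (iii) to part (i) are correct and essentially sound. For (iii) you use exactly the paper's construction: $\widetilde X$ sits inside $\mathbb P_X(E)$ as the zero locus of a transverse section of the rank-$(c-1)$ tautological quotient, so $\tau$ factors as a complete bundle-section inclusion of one lower codimension followed by the flat projective bundle map. For (ii) you take a genuinely different route from the paper: you use the blow-up identity $j_*z'=\tau_*\iota_{\mathcal E*}(\xi^{c-1}\cdot p^*z')$ and feed the exceptional divisor into the divisorial base case, whereas the paper works in $\mathbb P(E^*)$, cuts out the smooth hypersurface $X'=Z(\sigma')$ containing $\mathbb P(E^*|_Y)$ as a bundle-section of one lower rank, and descends with $\pi_*(h^{r-1}\pi^*(\cdot))$. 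Your intertwined induction ((iii) at codimension $c$ from (ii) at $c-1$; (ii) at $c$ from (iii) at $c$ and the divisor case) is consistent and arguably cleaner, and both routes cost the same inputs.

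The genuine gap is in part (i), which you correctly identify as the crux but do not prove. Your reduction of the smooth-divisor case to the surjectivity of $\iota_Y^*\colon{\rm CH}(\mathcal D)_{\rm fl_*Ch}\to{\rm CH}(Y)_{\rm fl_*Ch}$, where $Y$ is a fibre of the universal family $\rho\colon\mathcal D\to|L|$, is a dead end rather than merely "the main obstacle." Restriction to a fibre of a family is essentially never surjective on Chow groups (already ${\rm CH}_0$ of the fibre is typically far larger than the image of restriction), and there is no reason the situation improves on the ${\rm fl_*Ch}$ subgroups: a generator $z'=\pi_*(D_1\cdots D_{N-d})$ requires spreading out both the flat family $P\to Y$ and the divisor classes $D_i$ over $\mathcal D$, and divisor classes on the fibre do not extend in general (the Picard group jumps). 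The localization sequence controls classes supported on a closed subset of $\mathcal D$, not classes on the fibre $Y$, so Noetherian induction on $\dim X$ does not reach the needed statement. The paper's proof avoids any extension problem by a different mechanism: it blows up the graph $\Gamma_j$ inside $Y\times X$, observes that the first projection $p\colon T\to Y$ is smooth and the second projection $q\colon T\to X$ is flat (equidimensionality of fibres uses precisely the finiteness of $j$ and $\dim Y=\dim X-1$), and uses $\Gamma_j=\pm\tau_*E^n$ to rewrite $j_*w={\rm pr}_{X*}({\rm pr}_Y^*w\cdot\Gamma_j)$ as $\pm\psi_*(p_W^*(D_1\cdots D_{N-d})\cdot p_T^*E^n)$ with $\psi$ flat. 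Everything on $Y$ is pulled back to $T$ along the smooth map $p$ (no extension needed), and the only new class introduced is the power of the exceptional divisor. This construction also handles an arbitrary finite morphism $j$ uniformly, so your separate sketch for the non-embedded case (primitive-element presentation in a $\mathbb P^1$-bundle, modulo a lower-dimensional locus) is not needed and is itself unsubstantiated.
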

We will also  show in Section \ref{sectheoles3prop} how Theorem \ref{theopourfilt} easily implies Theorem \ref{theo0cyclesflpsh} for cycles of dimension $\leq 3$ (cf.  Theorem \ref{theo012}).

 Theorem \ref{theo0cyclesflpsh} is then  obtained as a  consequence  of the following ``cbs resolution theorem",  which will be proved in Section \ref{secmainproofofmain}.
\begin{theo} \label{theocbsreso} Let $Z\subset X$ be a smooth subvariety of dimension $d$, with $X$ smooth projective and ${\rm dim}\,X> 4d$. Then, after successive blow-ups of smooth complete bundle-sections $W_i\subset X_i\stackrel{\tau_i}{\rightarrow} X_{i-1}$, $i=0,\ldots,r$, $X_0=X$, the proper transform $Z_r\subset X_r$ is a smooth connected component of a smooth complete bundle-section $Z_r^*$ on ${X}_r$.
\end{theo}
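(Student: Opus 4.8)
The target is to exhibit (the proper transform of) $Z$ as a single connected component of a \emph{transverse} zero-locus $Z(s)$ of a section $s$ of a vector bundle $E$ of rank $c=\dim X-d$ on the final $X_r$. Transversality forces $E|_{Z_r}\cong N_{Z_r/X_r}$, so morally the whole problem is to extend the normal bundle of $Z$ across the ambient variety and to produce a compatible section. The plan is therefore to first manufacture a highly redundant such presentation (where the bundle has rank far larger than $c$), then to \emph{compress} it down to rank exactly $c$, and finally to use the allowed blow-ups of bundle-sections to repair the places where the compression degenerates. The base case $d\le 3$ is already available as Theorem \ref{theo012}, so I would run an induction on $d$.

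For the redundant presentation I would use positivity: for $k\gg 0$ the sheaf $I_Z\otimes\mathcal{O}_X(k)$ is globally generated, so finitely many sections $f_1,\dots,f_M$ generate it and assemble into $\sigma\in H^0(X,\mathcal{O}_X(k)^{\oplus M})$ with $Z(\sigma)=Z$ scheme-theoretically. Since the $f_i$ generate $I_Z$, their classes mod $I_Z^2$ generate the conormal bundle, i.e. the derivative $d\sigma$ realizes $N_{Z/X}$ as a subbundle $N\hookrightarrow \mathcal{O}_X(k)^{\oplus M}|_Z$ of corank $M-c$. To cut the rank down to $c$ it then suffices to find a global corank-$c$ subbundle $S\subset \mathcal{O}_X(k)^{\oplus M}$ whose restriction $S|_Z$ is everywhere complementary to $N$: one sets $E:=\mathcal{O}_X(k)^{\oplus M}/S$ and lets $s$ be the image of $\sigma$, and the complementarity is exactly the transversality of $s$ along $Z$.

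The hard part, and the only genuine obstruction, is that complementarity of $S|_Z$ and $N$ fails precisely on the degeneracy locus $B\subset Z$ of the induced map $N\to E|_Z$ between two rank-$c$ bundles; generically this is a \emph{divisor} in $Z$ (codimension one), so it \emph{cannot} be removed by a general choice of $S$, no matter how positive $\mathcal{O}_X(k)^{\oplus M}$ is. This is where the blow-ups enter: I would enclose $B$ (of dimension $\le d-1$) in a smooth complete bundle-section $W\subset X$ and blow it up, using that on the blow-up the normal bundle of the proper transform of $Z$ is twisted by the exceptional divisor. This twist lets one modify the offending bundle map so that its new degeneracy locus has strictly smaller dimension, at which point one recurses. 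The crucial bookkeeping is that a blow-up changes neither $\dim X=n$ nor $\dim Z=d$, while the degeneracy locus to be treated has dimension $\le d-1$; hence the hypothesis $n>4d$ survives the induction (indeed $n>4d>4(d-1)$), and the constant $4$ is exactly what leaves enough codimension to fatten each degeneracy locus into an auxiliary smooth bundle-section (of dimension on the order of $2d$–$3d$) and still invoke Bertini/Kleiman to keep all centers smooth and all intersections transverse.

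Finally I would verify the maintenance conditions that make the output usable: each blown-up center is a smooth complete bundle-section (so that Theorem \ref{theopourfilt}(iii) applies later when pushing the class back down), the proper transform $Z_r$ stays smooth and birational to $Z$, and the compressed section $s$ can be perturbed away from $Z_r$ so that the total zero-locus $Z_r^*=Z(s)$ is smooth with $Z_r$ one of its connected components. I expect the extension and global-generation steps to be soft; the entire weight of the argument, and the reason the precise bound $\dim X>4d$ appears, lies in the inductive removal of the codimension-one degeneracy locus by blow-ups of bundle-sections.
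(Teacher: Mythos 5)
Your strategy---write $I_Z$ as the image of a large trivial-ish bundle, compress to a rank-$c$ bundle $E$ with a section $s$, and repair the degeneracy locus of the compression by blow-ups---is genuinely different from the paper's, which never attempts to build the rank-$c$ bundle in one step: it descends one hypersurface at a time (a general very ample $H\supset Z$ has only ordinary double points along a \emph{smooth} $D\subsetneq Z$ of dimension $d-c$, by the strong Bertini Lemma \ref{letroptropfacile}), applies the inductive hypothesis to $D$, blows up, and uses an explicit elementary modification of the bundle (Lemma \ref{subtle.bu.lem}) to see that the proper transform of $H\cap X$ is again a smooth complete bundle-section. The gap in your argument sits at its core step: you assert that after blowing up a bundle-section $W\supset B$ ``the twist lets one modify the offending bundle map so that its new degeneracy locus has strictly smaller dimension,'' but you give no mechanism. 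The degeneracy locus of a general map between two rank-$c$ bundles on $Z$ has expected codimension $1$ regardless of any line-bundle twist; after the blow-up you must again choose a compression $S$, and a general one again degenerates in codimension $1$, so the dimension does not drop for free. Making an elementary modification along the exceptional divisor actually kill the degeneration requires precise control of the corank and of the local normal form of the degeneration (this is exactly what Lemma \ref{subtle.bu.lem} supplies in the corank-$1$, ordinary-double-point situation), and your map $N\to E|_Z$ also has deeper corank-$k$ strata of expected codimension $k^2$, nonempty as soon as $d\geq 4$, which you do not address at all.

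Two further problems. You propose to ``enclose $B$ in a smooth complete bundle-section $W$''; this can be done by general complete intersections only when $B$ is smooth and the dimension count permits, but degeneracy loci (especially their deeper strata) need not be smooth, so in general you would need the full strength of the theorem applied to $B$ itself---that is, an induction on $d$ with base case $d=0$, which is how the paper organizes the argument via Properties ${\cbs}_d$ and ${\cbs}'_d$. Your proposed base case, Theorem \ref{theo012} for $d\leq 3$, is a statement about Chow classes, namely $(d-2)!\,{\rm CH}_d(X)\subset {\rm CH}_d(X)_{\rm fl_*Ch}$, not the geometric statement that $Z$ becomes a connected component of a smooth complete bundle-section after cbs blow-ups, so it cannot serve as the base of this induction. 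Finally, you do not track the transversality (``full intersection'') of the successive centers with the proper transforms of $Z$ and of the previously constructed bundle-sections; without this, Lemma \ref{bu.lems}(ii) does not apply and the complete bundle-section property is not preserved along the blow-up sequence. Controlling exactly this is why the paper must prove the strengthened statements \ref{cbs.1.prop} and \ref{cbs.2.prop} with the auxiliary varieties $W^j$ and the auxiliary complete intersection $\bar X$.
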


\begin{coro}\label{cibu.14.cor}
  Let $Z\subset X$ be smooth, projective  varieties such that $\dim Z<\frac14 \dim X$.
  Then  after successive blow-ups   $ X_{r+1}\stackrel{\tau_{r+1}}{\rightarrow} X_r\stackrel{\tau_{r}}{\rightarrow}\cdots\to X_0:=X$ along smooth complete bundle-sections $W_i\subset X_i$, there exists  a complete intersection subvariety  $Z_{r+1}\subset X_{r+1}$ such that $\Pi_*(Z_{r+1})=Z$ as effective cycles in $X$, where $\Pi: X_{r+1}\rightarrow X$ is the composition of the $\tau_i$'s.
  \end{coro}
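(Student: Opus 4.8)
The plan is to deduce the statement directly from the \emph{cbs resolution theorem} (Theorem~\ref{theocbsreso}), the extra blow-up $\tau_{r+1}$ serving only to turn a complete bundle-section into a genuine complete intersection. First I would apply Theorem~\ref{theocbsreso} to the smooth subvariety $Z\subset X$ of dimension $d$: since the hypothesis $\dim Z<\frac14\dim X$ means exactly $\dim X>4d$, it produces a tower of blow-ups $X_r\stackrel{\tau_r}{\to}\cdots\to X_0=X$ along smooth complete bundle-sections, such that the proper transform $Z_r$ of $Z$ is a smooth connected component of a smooth complete bundle-section $Z_r^*\subset X_r$, say of codimension $c$ and cut out by a transverse section of a rank-$c$ bundle $E$. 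Because each $\tau_i$ is a blow-up along a smooth centre not containing the proper transform $Z_{i-1}$, it restricts to a birational, hence degree-one, morphism $Z_i\to Z_{i-1}$; composing, $\Pi_{r*}[Z_r]=[Z]$, where $\Pi_r=\tau_1\circ\cdots\circ\tau_r$. It therefore remains to manufacture, after one further blow-up, a complete intersection whose pushforward recovers $[Z_r^*]$ up to contracted components.

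For the conversion I would blow up $X_r$ along the smooth complete bundle-section $Z_r^*$ itself --- an admissible centre --- to obtain $\tau_{r+1}\colon X_{r+1}=B_{Z_r^*}X_r\to X_r$ with exceptional divisor $\mathbb{E}=\mathbb{P}(N^\vee)\to Z_r^*$, a $\mathbb{P}^{c-1}$-bundle over $Z_r^*$. The idea is that, although $Z_r^*$ is only the zero locus of a rank-$c$ bundle, a \emph{section} of this projective bundle sits inside $X_{r+1}$ as an honest complete intersection. Concretely, fix an ample $A$ on $X$ and, for $m\gg0$, choose $c-1$ general members $H_1,\dots,H_{c-1}$ of the base-point-free linear system $|\mathcal{O}_{X_{r+1}}(-\mathbb{E})\otimes\Pi^*A^{m}|$, whose restriction to each fibre $\mathbb{P}^{c-1}$ of $\mathbb{E}$ is $\mathcal{O}(1)$. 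Then I would set
\[
Z_{r+1}:=\mathbb{E}\cap H_1\cap\cdots\cap H_{c-1},
\]
which is cut out by the $c$ divisors $\mathbb{E},H_1,\dots,H_{c-1}$ and is thus a complete intersection. A fibrewise count ($c-1$ general hyperplanes meet a $\mathbb{P}^{c-1}$ in one reduced point) together with Bertini shows that $Z_{r+1}$ is smooth of dimension $d$ and maps isomorphically onto $Z_r^*$, so that $\tau_{r+1*}[Z_{r+1}]=[Z_r^*]$.

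Combining the two steps gives $\Pi_*[Z_{r+1}]=\Pi_{r*}[Z_r^*]=[Z]+\Pi_{r*}[R]$, where $R:=Z_r^*\setminus Z_r$ is the union of the remaining connected components. When $Z_r^*$ is connected (so $R=\varnothing$) this already yields $\Pi_*[Z_{r+1}]=[Z]$ on the nose, and we are done. In general the hard part will be to guarantee the \emph{effective-cycle} equality, i.e.\ that $\Pi_{r*}[R]=0$: this forces every auxiliary component of the complete bundle-section to be contracted to dimension $<d$ by $\Pi_r$. I expect this to be ensured by the construction underlying Theorem~\ref{theocbsreso} --- the extra components of $Z_r^*$ are created over the exceptional loci of the tower $X_r\to X$ and are therefore swept into dimension $<d$ by $\Pi_r$, so that they contribute $0$ to the pushforward of $d$-cycles. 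Verifying this last point (rather than the complete-intersection conversion, which is handled cleanly by the section-in-the-exceptional-divisor trick) is the real content that must be extracted from the proof of Theorem~\ref{theocbsreso}.
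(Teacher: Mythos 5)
Your construction is essentially the paper's --- blow up $Z_r^*$ and cut the exceptional divisor by general members of $|\pi_r^*H-\mathbb{E}|$ --- but there is one genuine gap, and it is precisely the point you flag at the end without resolving. You take $\mathbb{E}$ to be the \emph{entire} exceptional divisor over $Z_r^*$, so your $Z_{r+1}$ pushes forward to $[Z_r^*]=[Z_r]+[R]$, and you then hope that $\Pi_{r*}[R]=0$ on the grounds that the auxiliary components of $Z_r^*$ are ``swept into dimension $<d$'' by $\Pi_r$. That is not what happens: as Remark~\ref{remanewlabel} explains, $Z_r^*$ is essentially the birational transform of a complete intersection $Z^*\subset X$ of the \emph{same} dimension $d$ as $Z$, containing $Z$ as one irreducible component. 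The remaining components of $Z_r^*$ are $d$-dimensional and map birationally onto $d$-dimensional subvarieties of $X$, so $\Pi_{r*}[R]$ is a nonzero effective $d$-cycle in general and your $\Pi_*[Z_{r+1}]$ would compute $[Z^*]$ rather than $[Z]$. There is no mechanism in Theorem~\ref{theocbsreso} that contracts these components, so the argument as written fails to give the stated effective-cycle equality.

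The paper's proof differs by one small but essential choice that removes the problem entirely: since $Z_r$ is a \emph{connected component} of $Z_r^*$, the exceptional divisor of $B_{Z_r^*}X_r$ splits as a disjoint union, and one takes $E_{r+1}$ to be only the connected component lying over $Z_r$. This is still a Cartier divisor on the smooth $X_{r+1}$, so $Z_{r+1}:=E_{r+1}\cap D^1\cap\cdots\cap D^c$, with $D^i\in|\pi_r^*H_r-E_{r+1}|$ general, $H_r$ sufficiently ample, and $c=\dim X-\dim Z-1$, is an honest intersection of divisors; fibrewise it meets each $\mathbb{P}^{c}$ over $Z_r$ in a single reduced point and the unwanted components of $Z_r^*$ never enter. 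With this one modification your argument coincides with the paper's.
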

\begin{proof} Let $X_r\to\cdots\to X_0:=X$ be as in Theorem~\ref{theocbsreso}, and
let $ \pi_r:X_{r+1}\to X_r$ be the blow-up of $Z^*_r$.
Let $E_{r+1}$ be the $ \pi_r$-exceptional divisor lying over $Z_r$.
Choose $H_r$  sufficiently ample on $X_r$ such that $|\pi_r^*H_r-E_{r+1}|$
restricts to a very ample divisor on   $E_{r+1}$.
 Then we can choose   general members  $D^i\in |\pi_r^*H_r-E_{r+1}|$ to obtain
 a complete intersection  subvariety
 $Z_{r+1}:=\bigl(E_{r+1}\cap D^1\cap\cdots\cap D^c\bigr)$ satisfying the desired property, where $c=\dim X-\dim Z-1$. \end{proof}
 Theorem \ref{theopourfilt} and Corollary \ref{cibu.14.cor} immediately imply Theorem \ref{theo0cyclesflpsh}.
Indeed, let $Z\subset X$ be a subvariety of dimension $d$. By desingularizing $Z$, we get a smooth subvariety $Z'\subset X\times \mathbb{P}^N$, with $N$ large,  projecting to $Z\subset X$. This way, we are reduced to proving that $[Z]\in{\rm CH}_d(X)_{\rm fl_*Ch}$ when $Z$ is smooth and ${\rm dim}\,X> 4{\rm dim}\,Z$.  We apply Corollary  \ref{cibu.14.cor} to $Z\subset X$, and get $Z_{r+1}\subset {X}_{r+1}\stackrel{\Pi}\rightarrow X$ such that
$[Z]=\Pi_*[Z_{r+1}]$ in ${\rm CH}_d(X)$, where $\Pi$ is a composition of blow-ups along smooth complete bundle-section centers and $Z_{r+1}$ is a complete intersection of divisors in $X_{r+1}$.   By Theorem \ref{theopourfilt}(iii), it follows  that $$\Pi_*[Z_{r+1}]=[Z]\in{\rm CH}_d(X)_{\rm fl_*Ch}.$$

  The methods used to prove Theorems \ref{theo0cyclesflpsh} and  \ref{theopourfilt} do not allow us to prove the stronger result  that $ {\rm CH}_d(X)=  {\rm CH}_d(X)_{\rm sm_*Ch}$. In particular, the proof of the main Proposition \ref{prohypersurafce} (Theorem \ref{theopourfilt}(i) above) does not work if we replace the groups ${\rm CH}_d(Y)_{\rm fl_*Ch}$ and  ${\rm CH}_d(X)_{\rm fl_*Ch}$ respectively  by  ${\rm CH}_d(Y)_{\rm sm_*Ch}$ and  ${\rm CH}_d(X)_{\rm sm_*Ch}$. This leaves open the following
\begin{question}  Are there smooth projective varieties $X$   such that ${\rm CH}(X)\not=  {\rm CH}(X)_{\rm sm_*Ch}$?
\end{question}
As  follows from Theorem \ref{theoab} below, the equality ${\rm CH}(X)=  {\rm CH}(X)_{\rm sm_*Ch}$ is satisfied by abelian varieties but it could be that for the example treated in Lemma \ref{leexample}, or for some homogeneous varieties,  we have ${\rm CH}_0(X)\not={\rm CH}_0(X)_{\rm sm_*Ch}$.

In section \ref{sechomogeneous}, we will give an alternative proof of Theorem \ref{theo0cyclesflpsh} for homogeneous varieties, which does not use the cbs resolution Theorem \ref{theocbsreso}, and  which, in the case of abelian varieties, even gives a stronger result.

\begin{theo}\label{theoab} Let $G$ be an algebraic group and $X$ be a  projective variety which is homogeneous under $G$.

(i) If  there exists a smooth projective $G$-equivariant completion $\overline{G}$ of $G$ which satisfies ${\rm CH}_0(\overline{G})= {\rm CH}_0(\overline{G})_{\rm sm_*Ch}$, then
 \begin{eqnarray}\label{eqnewdupourth2}  {\rm CH }_d(X)_{\rm sm_*Ch}={\rm CH}_d(X)
 \end{eqnarray}
 for all $d$.

(ii)  If  $A$ is  an abelian variety, then ${\rm CH}_d(A)={\rm CH}_d(A)_{\rm sm_*Ch}$ for any $d$.
\end{theo}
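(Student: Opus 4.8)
The plan is to deduce both parts from the corresponding statement for $0$-cycles by means of the family of group translates, and then to settle the $0$-cycle case on an abelian variety by hand using the Pontryagin product. Given a $d$-dimensional subvariety $Z\subset X$, I would first resolve its singularities to get a smooth projective $W$ of dimension $d$ with a proper morphism $j:W\to X$ such that $j_*[W]=[Z]$; crucially this keeps $X$ fixed, so homogeneity is preserved. I then form the two morphisms $\pi:W\times G\to G$, $(w,g)\mapsto g$, and $\Phi:W\times G\to X$, $(w,g)\mapsto g\cdot j(w)$. By homogeneity (Kleiman's transversality theorem) $\Phi$ is smooth, and over the identity $e\in G$ one has $\Phi_*\pi^*[e]=\Phi_*[W\times\{e\}]=j_*[W]=[Z]$. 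The point of the construction is that, writing $[e]=\sum_i n_i(\rho_i)_*\gamma_i$ with $\rho_i$ smooth proper and $\gamma_i\in{\rm CH}(\cdot)_{\rm Ch}$, compatibility of flat pullback with proper pushforward turns $\pi^*(\rho_i)_*\gamma_i$ into $({\rm id}_W\times\rho_i)_*$ of a Chern class, and composing with $\Phi$ (a composition of smooth proper morphisms) exhibits $[Z]$ as a sum of smooth proper pushforwards of Chern classes. Thus the equality $\rm{CH}_0(\cdot)=\rm{CH}_0(\cdot)_{\rm sm_*Ch}$ implies $\rm{CH}_d=\rm{CH}_{d,\,\rm sm_*Ch}$ for all $d$.

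For an abelian variety $A$ (where $G=A$ is already proper and $\overline G=A$) this is literally clean: $W\times A$ is proper, both $\Phi(w,a)=j(w)+a$ and $\pi$ are genuinely smooth and proper, and $[Z]=\Phi_*\pi^*[0]$, so part (ii) reduces to proving $\rm{CH}_0(A)=\rm{CH}_0(A)_{\rm sm_*Ch}$. For part (i) the group $G$ need not be proper, so the translate family is only proper over $G$; here I would compactify the base using the given completion $\overline G$, extend the correspondence over $\overline G$, and pull back the class $[e]$ of the identity, reducing $d$-cycles on $X$ to the hypothesis $\rm{CH}_0(\overline G)=\rm{CH}_0(\overline G)_{\rm sm_*Ch}$.

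For the $0$-cycle case on $A$, set $S={\rm CH}_0(A)_{\rm sm_*Ch}$. It is a subgroup, stable under translations (smooth proper isomorphisms), and, via the smooth proper sum map $\sigma:A\times A\to A$ together with external products of Chern classes, stable under the Pontryagin product. It therefore suffices to prove $[0]\in S$, since then $[a]=(t_a)_*[0]\in S$ for all $a$, whence $S={\rm CH}_0(A)$. A Riemann--Roch computation on $A$ (where ${\rm td}(A)=1$, as $T_A$ is trivial) shows one can realize $\int_A c_g(E)=1$ for a suitable bundle $E$, so $\alpha:=c_g(E)\in{\rm CH}_0(A)_{\rm Ch}\subset S$ is a $0$-cycle of degree $1$. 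Writing $\alpha=[0]+\eta$ with $\eta$ of degree $0$ and using that $[0]$ is the unit for the Pontryagin product, I get $\alpha^{*k}=\sum_i\binom{k}{i}\eta^{*i}$. By Bloch's theorem that degree-$0$ cycles are nilpotent for the Pontryagin product, $\eta^{*N}=0$ for some $N$, so $k\mapsto\alpha^{*k}$ is polynomial of degree $<N$ with value $[0]$ at $k=0$; Newton's forward-difference formula then gives $[0]=\sum_{k=1}^N(-1)^{k-1}\binom{N}{k}\alpha^{*k}$, an integral combination of elements $\alpha^{*k}\in S$. Hence $[0]\in S$ and part (ii) follows.

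The main obstacle differs between the two parts. For part (ii) the whole content is concentrated in the single inclusion $[0]\in S$, i.e. producing a degree-$1$ Chern-class $0$-cycle and then extracting \emph{exactly} the point class; the Pontryagin/nilpotency extraction and the translate reduction are otherwise formal. For part (i) the essential difficulty is geometric: $G$ is not proper, and one must control the family of translates over the boundary $\overline G\setminus G$ while keeping the total space smooth and the two structure maps smooth and proper, so that the base-change argument still applies. This is precisely where Kleiman's transversality and the freedom in choosing the completion $\overline G$ are needed, and it is the part I expect to require the most care.
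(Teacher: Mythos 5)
Your overall architecture (the family of group translates $\Phi:W\times G\to X$, smoothness by equivariance, and the reduction of all $d$-cycles to the class of the identity/origin via flat base change) is exactly the paper's, and your part~(i) sketch is essentially the paper's argument minus the one technical tool it needs, namely $G$-equivariant resolution of indeterminacies (Reichstein--Youssin) applied to $\overline{G}\times\widetilde{Z}\dashrightarrow X$, which is what produces a smooth \emph{proper} $\widetilde{F}:Y\to X$ over the compactified base. The genuine problem is in part~(ii), at the step you yourself identify as carrying ``the whole content'': producing a degree-one element of ${\rm CH}_0(A)_{\rm Ch}$. Riemann--Roch does not do this. Since ${\rm td}(A)=1$, it only gives $\chi(A,E)=\int_A{\rm ch}_g(E)$, and ${\rm ch}_g(E)=\frac{(-1)^{g-1}}{(g-1)!}c_g(E)+(\hbox{decomposable terms with denominators})$, so integrality of $\chi$ constrains $\deg c_g(E)$ modulo $(g-1)!$ and the degrees of decomposable classes; it does not furnish a bundle with $\int_A c_g(E)=1$. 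What Riemann--Roch-type arguments actually give is $(g-1)!\,[x]\in{\rm CH}_0(A)_{\rm Ch}$ and $\deg c_1(L)^g=g!$ for a principal polarization --- neither of which yields degree one. Worse, Debarre's theorem (cited in the paper) shows that for a very general abelian variety the point class is \emph{not} in ${\rm CH}_0(A)_{\rm Ch}$, so any soft construction of special $0$-cycles in ${\rm CH}_0(A)_{\rm Ch}$ should be treated with suspicion; the existence of a degree-one element of ${\rm CH}_0(A)_{\rm Ch}$ on an arbitrary abelian variety is an unproved (and to my knowledge open) integrality assertion, not a computation.

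The paper sidesteps this entirely: it chooses a smooth curve $C\subset A$ which is a complete intersection of ample divisors, so that by Lefschetz the induced homomorphism $\psi:JC\to A$ is surjective, hence a smooth proper morphism of abelian varieties with $\psi_*[0_{JC}]=[0_A]$; and on a Jacobian the point class \emph{is} the top Chern class of an explicit rank-$g$ bundle with a transverse section vanishing at one point (Mattuck, Debarre). This puts $[0_A]$ directly into ${\rm CH}_0(A)_{\rm sm_*Ch}$ with no further work. Your Pontryagin-product endgame (closure of ${\rm CH}_0(A)_{\rm sm_*Ch}$ under $*$ via the sum map, Bloch's nilpotence of degree-zero cycles, and the forward-difference extraction of $[0]$ from the powers $\alpha^{*k}$ of a degree-one class $\alpha$) is correct as formal algebra and would be a genuinely different, attractive route --- but only once a degree-one element of ${\rm CH}_0(A)_{\rm sm_*Ch}$ is in hand, and the only known way to produce one is the Jacobian argument, which in fact hands you $[0_A]$ itself and makes the Pontryagin step unnecessary. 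To repair your proof, replace the Riemann--Roch claim by the passage to $JC$ as above.
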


 We do not know if the assumption in Theorem \ref{theoab}(i)  is always satisfied. The  stronger version  that there always exists  a smooth projective $G$-equivariant completion $\overline{G}$ of $G$ which satisfies
 \begin{eqnarray}\label{eqnewdupourth2introzerosm} {\rm CH}_0(\overline{G})= {\rm CH}_0(\overline{G})_{\rm Ch}
   \end{eqnarray}
   is wrong for abelian varieties.  For simply connected groups, the   condition (\ref{eqnewdupourth2introzerosm}) seems to be  closely   related to the torsion order of $G$ being $1$ (see \cite{demazure} and Section \ref{secmainproofofmain}), but the precise relation is not obvious to us.

To finish, let us mention that Kleiman also studies in  \cite{kleiman} the strong smoothability problem, which asks whether, given a smooth projective variety $X$ and finitely many smooth subvarieties $W_i\subset X$, any cycle $z\in {\rm CH}(X)$ is a combination of classes of smooth subvarieties $Z_j\subset X$, that have a proper smooth intersection with all the  $W_i$.
Kleiman establishes strong smoothability of cycles with rational coefficients, in the range  $2d-1\leq n$.

Our results do not prove  strong smoothability of cycles with integral coefficients in the range $2d<n$, except in the case of abelian varieties. In general, they imply the weaker statement that, in the situation above, if $2d<n={\rm dim}\,X$,
any cycle $z\in {\rm CH}_d(X)$ is a combination of classes of smooth subvarieties $Z_j\subset X$, that have a proper  intersection with all the  $W_i$  (see Theorem \ref{theopourabstrogn}).

\vspace{0.5cm}

 {\bf Thanks.} {\it CV thanks Olivier Benoist and  Olivier Debarre for  introducing  her to  this subject and for interesting discussions, and  Michel Brion, Laurent Manivel, Nicolas Perrin and Burt Totaro for their help with  homogeneous varieties. Both authors thank the referees for their careful reading and constructive suggestions.}
 \section{Criteria for smoothability \label{secwhitney}}
 We first prove  the following basic  result
 \begin{prop}\label{letranswithflat}  Let $\phi: Y\rightarrow X$ be a  flat morphism between smooth varieties over a field of characteristic $0$. Let $n={\rm dim}\,X$. Then for a smooth subvariety $Z\subset Y$  of dimension $d<\frac{n}{2}$ which is  in general position and such that the restriction  $\phi_{\mid Z}$ is proper, $\phi_{\mid Z}:Z\rightarrow \phi(Z)$ is  an isomorphism, so the closed algebraic subset  $\phi(Z)\subset X$  is smooth. Furthermore, if $n=2d$, $\phi_{\mid Z}$ is an immersion and the image $\phi(Z)$ has finitely many singular points.
 \end{prop}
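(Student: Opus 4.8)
The plan is to treat this as the algebraic-geometric analogue of the easy half of Whitney's embedding theorem: a generic $d$-dimensional $Z$ should project isomorphically onto its image once $2d<n$, and with only finitely many double points when $2d=n$. Write $r=\dim Y-n\ge 0$ for the relative dimension of the flat map $\phi$. Since $\phi|_Z$ is assumed proper, it suffices to establish two transversality statements, that $\phi|_Z$ is (a) injective and (b) an immersion, because $\phi|_Z$ proper together with (a) and (b) forces it to be a closed immersion onto a smooth subvariety $\phi(Z)\subset X$ isomorphic to $Z$. I first record the consequence of flatness I will use repeatedly: since $\phi\times\phi\colon Y\times Y\to X\times X$ is flat and $\Delta_X\subset X\times X$ is smooth of codimension $n$, the fibre product $Y\times_X Y=(\phi\times\phi)^{-1}(\Delta_X)$ is pure of dimension $(2n+2r)-n=n+2r$; in particular its off-diagonal part $\Gamma:=(Y\times_X Y)\setminus\Delta_Y$ has dimension $\le n+2r$.

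For injectivity, observe that $\phi|_Z$ fails to be injective exactly on $(Z\times Z)\cap\Gamma$. For $Z$ in general position this intersection meets $\Gamma$ in the expected dimension inside $Y\times Y$, namely
\[
\dim\bigl((Z\times Z)\cap\Gamma\bigr)\ \le\ 2d+(n+2r)-(2n+2r)\ =\ 2d-n .
\]
When $2d<n$ this is negative, so the locus is empty and $\phi|_Z$ is injective; when $2d=n$ it is $0$, so there are only finitely many unordered pairs $\{z_1,z_2\}$ with $\phi(z_1)=\phi(z_2)$, and these account for the finitely many double points of $\phi(Z)$.

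For the immersion property I run the parallel count inside the projectivised tangent bundle $\mathbb{P}(T_Y)$, of dimension $2(n+r)-1$. In characteristic $0$, generic smoothness gives that $\phi$ is smooth over a dense open set, so $\ker(d\phi)=T_{Y/X}$ is a rank-$r$ subbundle there and $\mathbb{P}(T_{Y/X})$ has dimension $(n+r)+(r-1)=n+2r-1$; the differential of $\phi|_Z$ is non-injective at $z$ precisely when $\mathbb{P}(T_zZ)$ meets $\mathbb{P}(T_{Y/X,z})$. For $Z$ in general position, $\mathbb{P}(T_Z)$ (dimension $2d-1$) meets $\mathbb{P}(T_{Y/X})$ in dimension at most $(2d-1)+(n+2r-1)-(2n+2r-1)=2d-n-1$, which is negative for all $2d\le n$. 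Hence $\phi|_Z$ is an immersion throughout the range $2d\le n$. Combining the two counts: for $2d<n$, $\phi|_Z$ is an injective proper immersion, hence a closed immersion and $\phi(Z)$ is smooth; for $2d=n$, $\phi|_Z$ is a proper immersion injective away from finitely many pairs, so $\phi(Z)$ is smooth outside the finitely many corresponding image points.

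The main obstacle is making ``general position'' do exactly this work, and the genuine subtlety is that in the double-point count the two factors are the \emph{same} $Z$, so $Z\times Z$ cannot be moved as a generic cycle of $Y\times Y$ and in particular cannot be made to avoid $\Gamma$ near the diagonal. I would resolve this by fixing a sufficiently ample, basepoint-free family $\{Z_t\}_{t\in T}$ of deformations of $Z$ in $Y$ and carrying out the dimension counts in the total incidence variety over $T$: for injectivity one bounds $\dim\{(t,(y_1,y_2)) : y_1\ne y_2,\ \phi(y_1)=\phi(y_2),\ y_1,y_2\in Z_t\}$ by showing that passing through two distinct points imposes independent conditions on $t$, and concludes that the bad locus in $T$ is proper; the tangent count is handled similarly on the incidence variety over $T$ inside $\mathbb{P}(T_Y)$. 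Flatness of $\phi$ (to pin down the dimension of $Y\times_X Y$) and characteristic $0$ (for generic smoothness of $\phi$ and for the existence of enough transverse members via Bertini) are precisely the inputs that validate these counts; one also checks that $Z$ in general position avoids the proper locus where $\phi$ is not smooth, so that the rank-$r$ computation for $T_{Y/X}$ applies along $Z$.
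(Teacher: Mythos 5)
Your injectivity count is essentially the paper's argument: by flatness $Y\times_XY$ has pure codimension $n$ in $Y\times Y$, and the general-position hypothesis (which the paper formalizes via a family $\mathcal{Z}\to B$ of deformations of $Z$ with $(f,f):\mathcal{Z}\times_B\mathcal{Z}\to Y\times Y$ submersive off the diagonal) forces $Z\times Z$ to meet the off-diagonal part in dimension $\le 2d-n$. The subtlety you flag --- that both factors are the same $Z$, so $Z\times Z$ cannot be moved as an arbitrary cycle --- is exactly what that ``very mobile off the diagonal'' condition is designed to handle, and your proposed incidence-variety count over the parameter space is the same device. So that half is sound.

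The genuine gap is in the immersion step. You compute $\dim\mathbb{P}(T_{Y/X})=n+2r-1$ only over the dense open set where $\phi$ is smooth, and then assert that a general-position $Z$ avoids the locus where $\phi$ is not smooth. That is false in general: the non-smooth locus of a flat morphism can be a divisor (already for a finite flat double cover the ramification locus has codimension $1$), and a $d$-dimensional $Z$ in general position with $d\ge 1$ must meet any divisor. At such points $\ker d\phi$ has rank $>r$, so your dimension count simply does not cover them, and these are precisely the points where $d(\phi|_Z)$ is most likely to degenerate. The paper closes this by stratifying $Y$ by the rank of $d\phi$: on the stratum $Y_k$ where the rank equals $k$, generic smoothness (characteristic $0$) gives $\dim\phi(Y_k)\le k$, and flatness (all fibres of $\phi$ have dimension $r$) gives $\dim Y_k\le r+k$, while $\ker d\phi$ has rank $r+n-k$ there; hence $\mathbb{P}(\ker d\phi|_{Y_k})$ has dimension at most $(r+k)+(r+n-k-1)=2r+n-1$ for \emph{every} $k$ --- the shrinking of the stratum exactly compensates the jump of the kernel. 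Only then is the total locus $W=\bigsqcup_k\mathbb{P}(\ker d\phi|_{Y_k})$ of codimension $\ge n\ge 2d$ in $\mathbb{P}(T_Y)$, so that the general-position count for $\mathbb{P}(T_Z)$ (of dimension $2d-1$) yields $\mathbb{P}(T_Z)\cap W=\emptyset$. Without this stratification your immersion claim is unproved exactly where it matters, and you would need to add it (or an equivalent uniform bound on the projectivized kernel over all rank strata) to complete the proof.
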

 Although this will be clear from the proof, let us first make precise what we mean by ``in general position". For the application to the proposition, the general position assumption will be  a transversality condition with respect to $\phi$ and its infinitesimal properties.
  More precisely, let us say that $Z$ is in general position if   $Z$ is the general fiber $\mathcal{Z}_b$ of a family of embeddings \begin{eqnarray}\label{eqmatrice}\begin{matrix} \mathcal{Z}&\stackrel{f}{\rightarrow} &Y
  \\
p\downarrow\,\,\,\,\,\, && &
\\
B&& &
\end{matrix}\end{eqnarray}  where $\mathcal{Z}$ is smooth and $p$ is smooth, which  is very mobile at any point $(x,y)$, $x\not=y$, of $Z\times Z\cong \mathcal{Z}_b\times Z_b$, (that is,  $(f,f):\mathcal{Z}\times_B\mathcal{Z}\rightarrow Y\times Y$ is a submersion at any point $(x,y)$ of $Z\times Z\setminus \Delta_Z$), and whose tangent space is mobile at any point of $Z$, that is, the morphism $$F:\mathbb{P}(T_{\mathcal{Z}/B})\rightarrow \mathbb{P}(T_Y)$$  given by the differential  of the inclusions $f_b:\mathcal{Z}_b\rightarrow Y$, is submersive at any point of $Z$.

The important fact for us is the following
  \begin{rema} \label{rema1}{\rm Assuming $Y\subset \mathbb{P}^N$ is projective of dimension $m$, the general position assumption  will be satisfied by a  general    complete intersection of $m-d$ very ample  divisors. }
 \end{rema}
 For the proof of Proposition  \ref{letranswithflat}, we will use the following consequence of the ``general position'' assumption.
 \begin{lemm}\label{lemtranswhitney}
  (i) If $W\subset Y$ is a closed algebraic subvariety of codimension $>d$ and   $Z\subset Y$ is  a smooth subvariety of dimension $d$  which is in general position, then $Z$ does not intersect $W$.

  (ii)  If
 $W\subset Y\times Y$ is a subvariety of codimension $>2d$, and   $Z\subset Y$ is  a smooth subvariety of dimension $d$  which is in general position, then $Z\times Z$ does not intersect $W$ away from the diagonal of $Z$.

  (iii) If $W\subset \mathbb{P}(T_Y)$ is a subvariety of codimension $\geq 2d$, and   $Z\subset Y$ is  a smooth subvariety of dimension $d$  which is in general position, $\mathbb{P}(T_Z)$ does not intersect $W$.
 \end{lemm}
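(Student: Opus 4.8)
The plan is to prove all three parts by one and the same device: work on the total space of the family \eqref{eqmatrice} (or an associated bundle), use the relevant submersivity (``mobility'') hypothesis packaged into the general position assumption to bound the dimension of the preimage of $W$, and then project to the base $B$ to conclude that the \emph{general} fiber $\mathcal{Z}_b\cong Z$ is disjoint from the offending locus. Throughout I set $\dim B=:\beta$ and recall that $p$ is smooth with $d$-dimensional fibers, so $\dim\mathcal{Z}=\beta+d$.

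For (i), I would first note that the mobility hypotheses force $f\colon\mathcal{Z}\to Y$ to be submersive along $Z$: if $(f,f)$ is a submersion at some $(x,y)\in (Z\times Z)\setminus\Delta_Z$, then already $df_x\colon T_x\mathcal{Z}\to T_{f(x)}Y$ is surjective, since (using that $p$ is smooth) every tangent vector at $x$ extends to a tangent vector of $\mathcal{Z}\times_B\mathcal{Z}$ at $(x,y)$. Let $U\subset\mathcal{Z}$ be the open locus where $f$ is submersive; by general position $\mathcal{Z}_b\subset U$ for general $b$. On $U$ the preimage $f^{-1}(W)\cap U$ is pure of codimension $\operatorname{codim}_Y W\ge d+1$, so
$$\dim\bigl(f^{-1}(W)\cap U\bigr)\le \dim\mathcal{Z}-(d+1)=\beta-1<\beta.$$
Hence $p\bigl(f^{-1}(W)\cap U\bigr)$ is not dense in $B$, and for general $b$ the fiber $\mathcal{Z}_b$ misses $f^{-1}(W)$, i.e. $Z\cap W=\emptyset$.

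Parts (ii) and (iii) follow verbatim, after replacing $(\mathcal{Z},f,Y)$ by the appropriate total space and submersive map. For (ii) I work on $\mathcal{Z}\times_B\mathcal{Z}$, of dimension $\beta+2d$, whose fiber over $b$ is $Z\times Z$; the hypothesis gives $(f,f)$ submersive on an open set containing $(Z\times Z)\setminus\Delta_Z$, so the part of $(f,f)^{-1}(W)$ away from the relative diagonal has codimension $\operatorname{codim}_{Y\times Y}W\ge 2d+1$, hence dimension $<\beta$; projecting to $B$ shows that for general $b$ the locus $Z\times Z$ meets $W$ only along $\Delta_Z$. For (iii) I work on $\mathbb{P}(T_{\mathcal{Z}/B})$, of dimension $\dim\mathcal{Z}+(d-1)=\beta+2d-1$, whose fiber over $b$ is $\mathbb{P}(T_{\mathcal{Z}_b})$, sent isomorphically onto $\mathbb{P}(T_Z)\subset\mathbb{P}(T_Y)$ by $F$; since $F$ is submersive along this fiber, $F^{-1}(W)$ has codimension $\operatorname{codim}W\ge 2d$, hence dimension $\le\beta-1<\beta$, and projecting to $B$ yields $\mathbb{P}(T_Z)\cap W=\emptyset$ for general $b$.

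The one point demanding care — and the main (if modest) obstacle — is the interplay of the two genericities. The submersion hypotheses are only pointwise statements along the single general fiber $Z$, so one must first pass to the open submersive locus (namely $U$ for (i), and its analogues in $\mathcal{Z}\times_B\mathcal{Z}$ and $\mathbb{P}(T_{\mathcal{Z}/B})$), observe that the general fiber lies inside it, and only then run the codimension count there. Once this is set up, each conclusion reduces to the elementary fact that a subset of the total space of dimension $<\beta$ cannot dominate $B$, so the fiber over a general $b$ avoids it; as $Z=\mathcal{Z}_b$ is precisely such a general fiber, no properness of $p$ or $f$ is required, and the bookkeeping of the dimensions of $\mathcal{Z}\times_B\mathcal{Z}$ and of the projectivized relative tangent bundle is the only thing to double-check.
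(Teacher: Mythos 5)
Your proof is correct and follows essentially the same route as the paper's: pass to the open locus where $f$ (resp.\ $(f,f)$, resp.\ $F$) is submersive, bound the dimension of the preimage of $W$ there by the codimension count, and conclude that it cannot dominate $B$, so the general fiber $\mathcal{Z}_b=Z$ avoids it. The only (harmless) addition is your explicit derivation of the submersivity of $f$ along $Z$ from the very-mobility of $(f,f)$, which the paper simply asserts.
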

 \begin{rema}{\rm The general position assumption in each of these statements is relative to the choice of $W$, in the sense that, with the notation of (\ref{eqmatrice}), the Zariski open set of points $b\in B$ for which the fiber $\mathcal{Z}_b$ satisfies the conclusion depends on $W$.}
 \end{rema}
\begin{proof}[Proof of Lemma \ref{lemtranswhitney}]  (i) We use the notations of (\ref{eqmatrice}), with $Z=\mathcal{Z}_b$, $b\in B$ being a general point of $B$. As $f$ is a submersion  along $\mathcal{Z}_b$, there exists  a Zariski neighborhood $U$ of $\mathcal{Z}_b$ in $\mathcal{Z}$ such that any component of $f^{-1}(W)\cap U$  has codimension $>d$ in  $U$. As ${\rm dim}\,B={\rm dim}\,\mathcal{Z}-d$, it follows that
$$p_{\mid f^{-1}(W)\cap U}:  f^{-1}(W)\cap U \rightarrow B$$
cannot be dominant, hence for a general $b\in B$, $\mathcal{Z}_b$ does not intersect $f^{-1}(W)$, that is, $Z=f(\mathcal{Z}_b)$ does not intersect $W$.

(ii) The argument is the same as above with $f$ replaced by $(f,f):\mathcal{Z}\times_B\mathcal{Z}\rightarrow Y\times Y$. The fibers of $(p,p): \mathcal{Z}\times_B\mathcal{Z}\rightarrow B$ are now of dimension $2d$ and $(f,f)$ is a submersion away from the diagonal of $\mathcal{Z}_b$, so if $W\subset Y\times Y$ has codimension $>2d$, $(f,f)^{-1}(W)$ will have codimension $>2d$ in $\mathcal{Z}\times_B\mathcal{Z}\setminus \Delta_{\mathcal{Z}}$, and will not dominate $B$, since ${\rm dim}\,B={\rm dim}\,(\mathcal{Z}\times_B\mathcal{Z})-2d$.

(iii) The argument is the same as above except that  we work now with $F:\mathbb{P}(T_{\mathcal{Z}/B})\rightarrow \mathbb{P}(T_Y)$. We simply observe that the fibers of the natural map $\mathbb{P}(T_{\mathcal{Z}/B})\rightarrow B$ are of dimension $2d-1$.
\end{proof}
 \begin{proof}[Proof of Proposition \ref{letranswithflat}] Let $\Delta_Y\subset Y\times Y$ be the diagonal and let  $Y'\subset Y\times Y\setminus \Delta_Y$ be the closed algebraic subset $Y\times_XY\setminus \Delta_Y$. By flatness of $\phi$, we have ${\rm codim}\,(Y'\subset Y\times Y) =n$. As ${\rm dim}\,Z\times Z=2d<n$ and $Z$ is in general position, $Z\times Z$ does not intersect $Y'$ away from the diagonal by Lemma \ref{lemtranswhitney}(ii), so $\phi_{\mid Z}$ is injective. When $n=2d$, $Z\times Z$  intersects $Y'$ away from the diagonal in at most finitely many points. It remains to prove the infinitesimal statement, for which we only assume that $2d\leq n$.
 Let $Y_k\subset Y$ be the locally closed subset of $Y$ where the rank of $\phi$ is equal to $k$. Then, as we are in characteristic $0$, we have $${\rm dim}\,\phi(Y_k)\leq k,$$ hence by flatness, ${\rm codim}\,(Y_k\subset Y)\geq n-k$, or equivalently ${\rm dim}\,Y_k\leq m+k$, where $m:={\dim}\,Y-n$.
Along $Y_k$, we have the rank $k$ morphism
$$\phi_k:=(\phi_*)_{\mid Y_k}: T_{Y\mid Y_k}\rightarrow (\phi^*T_{X})_{\mid Y_k}$$
with kernel a subbundle  $\mathcal{K}_k \subset T_{Y\mid Y_k}$ of corank $k$. Let $W\subset \mathbb{P}(T_Y)$ be the set of pairs
$(y,u),\,y\in Y,\,u\in {\rm Ker}\,\phi_{*,y}$. The stratification of $Y$ by the $Y_k$'s describes $W$ as a union
$$W=\sqcup_k \mathbb{P}(\mathcal{K}_k).$$
As ${\rm dim}\,Y_k\leq k+m$ and ${\rm rk}\,\mathcal{K}_k=m+n-k$, we get
$${\rm dim}\,\mathbb{P}(\mathcal{K}_k)\leq 2m+n-1$$
for any $k$, and thus ${\rm dim}\,W\leq 2m+n-1$. As ${\rm dim}\,\mathbb{P}(T_Y)=2(m+n)-1$, it follows that
$${\rm codim}\,(W\subset \mathbb{P}(T_Y))\geq n.$$
By Lemma \ref{lemtranswhitney}(iii), $Z$ being of dimension $d$ and in general position with $n\geq 2d$, $\mathbb{P}(T_Z)$ does not intersect $W$. This means that $\phi_{\mid Z}$ is an immersion, which concludes the proof.
\end{proof}
We will combine Proposition \ref{letranswithflat} with  the following easy  result.
\begin{lemm}  \label{propoursegrechern} Let $X$ be smooth of dimension $n$ and let $z\in{\rm CH}_d(X)$ be a cycle. Assume that $z$ belongs to the subring ${\rm CH}^*(X)_{\rm Ch}$ of ${\rm CH}^*(X)$ generated by  Chern classes $c_i(E)$ for any coherent sheaf $E$ on $X$. Then there exist a smooth  variety $Y$  and a smooth projective morphism $f:Y\rightarrow X$ such that $z=f_*z'$ in ${\rm CH}(X)$, where $z'\in{\rm CH}(Y)$ belongs to the subring generated by divisors on $Y$.
\end{lemm}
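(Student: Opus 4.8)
The plan is to realize each ring generator of ${\rm CH}^*(X)_{\rm Ch}$ as a smooth projective pushforward of a monomial in divisor classes, and then to assemble an arbitrary element of the subring out of such pushforwards. First I would reduce from coherent sheaves to vector bundles: on the smooth variety $X$ (which we may take quasi-projective, so that the resolution property holds) every coherent sheaf admits a finite locally free resolution, so by the Whitney formula its Chern classes are integral polynomials in Chern classes of vector bundles; hence ${\rm CH}^*(X)_{\rm Ch}$ is generated as a ring by the classes $c_i(F)$ for $F$ a vector bundle. Replacing Chern by Segre classes, recall that for a vector bundle $F$ of rank $r$, with projective bundle $\pi_F\colon \mathbb{P}(F)\to X$ and tautological divisor class $\xi_F=c_1(\mathcal{O}_{\mathbb{P}(F)}(1))$, one has $s_i(F)=\pi_{F*}(\xi_F^{\,r-1+i})$; since $s(F)c(F)=1$, the Segre classes generate the same subring as the Chern classes. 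Thus each generator is already a smooth projective pushforward of a power of the single divisor $\xi_F$.

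Second, and this is the step I expect to be the crux, I would turn \emph{products} of such pushforwards into a single pushforward from a fiber product. Given vector bundles $F_1,\dots,F_k$, form $Y_0=\mathbb{P}(F_1)\times_X\cdots\times_X\mathbb{P}(F_k)$, with projections $p_j\colon Y_0\to\mathbb{P}(F_j)$ and structure map $\pi\colon Y_0\to X$; this is smooth and projective over $X$, and $Y_0$ is smooth. By repeatedly applying flat base change and the projection formula (peeling off one factor at a time, using that each $\pi_{F_j}$ is flat), one obtains the multiplicativity $\pi_*\bigl(\prod_j p_j^*\xi_{F_j}^{\,a_j}\bigr)=\prod_j\pi_{F_j*}(\xi_{F_j}^{\,a_j})$. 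Choosing $a_j=r_j-1+i_j$ then realizes an arbitrary Segre monomial $\prod_j s_{i_j}(F_j)$ as $\pi_*(z_0')$, where $z_0'=\prod_j p_j^*\xi_{F_j}^{\,a_j}$ is a product of divisor classes on $Y_0$. Once this multiplicativity is established, everything else is formal, since it is precisely what makes a \emph{subring} (closed under products, not just sums) of Segre classes accessible.

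Finally I would assemble a general class $z\in{\rm CH}_d(X)\cap{\rm CH}^*(X)_{\rm Ch}$. Writing $z$ as a finite integral combination $\sum_m n_m M_m$ of Segre monomials of codimension $c=n-d$, each $M_m=\pi_{m*}(z_m')$ comes from a fiber product $\pi_m\colon Y_m\to X$ as above, with $z_m'$ a product of divisors of codimension $c$ in $Y_m$. Set $Y=\bigsqcup_m Y_m$, which is smooth and smooth projective over $X$ via a map $f$. Because $\mathrm{Pic}(Y)=\prod_m\mathrm{Pic}(Y_m)$, the divisor classes occurring in $z_m'$ lift to divisor classes on $Y$ that agree with them on the component $Y_m$ and vanish on the others; their product is the class equal to $z_m'$ on $Y_m$ and $0$ elsewhere, which therefore lies in the subring of ${\rm CH}^*(Y)$ generated by divisors. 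Hence $z':=\sum_m n_m\,\widehat{z_m'}$, where $\widehat{z_m'}$ denotes this extension by zero, lies in that subring and satisfies $f_*z'=z$, as required. If one prefers $Y$ connected or equidimensional, one multiplies the components by suitable projective spaces, exactly as in the remark following Theorem~\ref{theo0cyclesflpsh}.
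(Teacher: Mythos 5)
Your proposal is correct and follows essentially the same route as the paper: reduction to locally free sheaves via finite resolutions and the Whitney formula, passage from Chern to Segre classes via $s(E)=c(E)^{-1}$, the identity $s_j(E)=\pi_*(c_1(H)^{j+r-1})$, and the projection formula on the fiber product of the projective bundles to handle monomials. Your explicit disjoint-union step for assembling integral combinations is only a slightly more detailed writing-out of what the paper leaves implicit when it reduces to a single monomial.
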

\begin{proof} First of all, using finite locally free resolutions and the Whitney formula, we know that $z$ belongs to the subring of ${\rm CH}^*(X)$ generated by the Chern classes $c_i(E)$ for any locally free coherent sheaf $E$ on $X$. Secondly, we can replace in this statement the Chern classes by the Segre classes, since the total Segre and Chern classes $s(E)$ and $c(E)$ satisfy   the relation $$s(E)=c(E)^{-1},\,\, c(E)=s(E)^{-1},$$ so any polynomial with integral coefficients in the Segre classes is a  polynomial with integral coefficients in the Chern  classes and vice-versa.

It thus suffices to prove that any monomial $s_{i_1}(E_1)\ldots s_{i_k}(E_k)\in {\rm CH}(X)$, where the $E_i$'s are locally free sheaves on $X$ of rank $r_i$, satisfies the conclusion of  Lemma \ref{propoursegrechern}. This statement follows  from the definition of Segre classes (see \cite{fulton}). Indeed, let $\pi_i: \mathbb{P}(E_i)\rightarrow X$ be the projectivization of $E_i$ and let $H_i\in {\rm Pic}(\mathbb{P}(E_i))$ with first Chern class $c_1(H_i)\in{\rm CH}^1(\mathbb{P}(E_i))$ be the dual of its Hopf line bundle (so that $R^0\pi_{i*}H_i=E_i^*$). Then
\begin{eqnarray} \label{eqpoursegre} s_j(E_i)=\pi_{i*} (c_1(H_i)^{j+r_i-1})\,\,{\rm in}\,\, {\rm CH}(X).\end{eqnarray}
It follows from (\ref{eqpoursegre}) and the projection formula that
\begin{eqnarray} \label{eqpoursegre2}s_{i_1}(E_1)\ldots s_{i_k}(E_k)=\pi_*({\rm pr}_1^*c_1(H_1)^{i_1+r_1-1}\ldots {\rm pr}_k^*c_1(H_k)^{i_k+r_k-1})\,\,{\rm in}\,\, {\rm CH}(X),\end{eqnarray}
where $\pi:\mathbb{P}(E_1)\times_X\ldots\times_X\mathbb{P}(E_k)\rightarrow X$ is the fibred product of the $\pi_i: \mathbb{P}(E_i)\rightarrow X$ and ${\rm pr}_i$ is the projection from $\mathbb{P}(E_1)\times_X\ldots\times_X\mathbb{P}(E_k)$ to its $i$-th factor.
\end{proof}
\begin{coro}   \label{coropropoursegrechern} Let $X$ be smooth of dimension $n$ and let $z\in{\rm CH}_d(X)_{\rm Ch}$, with $2d<n$. Then
 $z$ is smoothable, that is,  $z$ is  rationally equivalent to a  cycle $Z'=\sum_i n_i Z'_i$, where $Z'_i\subset X$ is smooth.
\end{coro}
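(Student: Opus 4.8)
The plan is to combine Lemma \ref{propoursegrechern}, which realizes $z$ as a smooth pushforward of a divisorial class, with Proposition \ref{letranswithflat}, which guarantees that general-position complete intersections map isomorphically onto smooth images when $2d<n$. First I would apply Lemma \ref{propoursegrechern} to write $z=f_*z'$ in $\mathrm{CH}(X)$, where $f:Y\rightarrow X$ is a smooth projective morphism with $Y$ smooth of some dimension $m$, and $z'\in\mathrm{CH}_d(Y)$ lies in the subring of $\mathrm{CH}^*(Y)$ generated by divisors. Since $z'$ has dimension $d$, hence codimension $m-d$ in $Y$, it is a $\mathbb{Z}$-linear combination of monomials $D_1\cdots D_{m-d}$, each a product of exactly $m-d$ divisor classes on $Y$. (If Lemma \ref{propoursegrechern} produces several pushforwards $\sum_j f_{j*}z_j'$, one treats each summand separately, or replaces $Y$ by the disjoint union of the $Y_j$.)

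Next I would represent each such divisorial monomial by a smooth complete intersection. Writing every divisor class $D_k$ as a difference $A_k-B_k$ of very ample classes on the projective variety $Y$ and expanding the product, each monomial $D_1\cdots D_{m-d}$ becomes an integral combination of products of $m-d$ very ample divisor classes. By Bertini in characteristic $0$, a general complete intersection $Z_i'\subset Y$ of $m-d$ members of such very ample linear systems is smooth of dimension $d$, and its class in $\mathrm{CH}_d(Y)$ equals the corresponding product of very ample classes, because the chosen members meet transversally. Crucially, by Remark \ref{rema1} such a general complete intersection of $m-d$ very ample divisors is in general position in the sense of the family \eqref{eqmatrice}: it arises as the fiber $\mathcal{Z}_b$ of the tautological family of complete intersections over the product of the relevant linear systems, which is very mobile and has mobile tangent directions. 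Collecting the terms, I obtain a rational equivalence $z'\sim\sum_i n_i[Z_i']$ in $\mathrm{CH}_d(Y)$ with each $Z_i'\subset Y$ a smooth $d$-dimensional complete intersection in general position.

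Finally I would apply Proposition \ref{letranswithflat} to the smooth (hence flat) morphism $f:Y\rightarrow X$. Since $f$ is proper each restriction $f_{\mid Z_i'}$ is proper, and as $2d<n$ Proposition \ref{letranswithflat} shows that $f_{\mid Z_i'}:Z_i'\rightarrow f(Z_i')$ is an isomorphism onto a smooth closed subvariety of $X$. In particular the map has degree one onto its image, so $f_*[Z_i']=[f(Z_i')]$ in $\mathrm{CH}_d(X)$. Pushing the rational equivalence forward by $f$ then yields $z=f_*z'=\sum_i n_i[f(Z_i')]$, a combination of classes of smooth $d$-dimensional subvarieties of $X$, which proves smoothability.

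The step I expect to be the main obstacle is ensuring the compatibility of the two general-position requirements: the complete intersections $Z_i'$ must be general enough both to be smooth (Bertini) and to satisfy the transversality conditions of Proposition \ref{letranswithflat} relative to the auxiliary loci $W\subset\mathbb{P}(T_Y)$ and $Y\times_XY$ determined by $f$. Since $Y$ and $f$ are fixed and only finitely many monomials (hence finitely many such loci $W$) occur, Remark \ref{rema1} precisely guarantees that a sufficiently general complete intersection meets all these conditions simultaneously, so this obstacle is resolved by a single genericity choice rather than any genuinely new input.
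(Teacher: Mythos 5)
Your argument is correct and is exactly the paper's proof, which is stated in one line as "Using Lemma \ref{propoursegrechern}, the result follows from Proposition \ref{letranswithflat} by Remark \ref{rema1}"; you have simply spelled out the intermediate steps (splitting divisors into differences of very ample classes, Bertini, and the single simultaneous genericity choice handling the finitely many transversality conditions). No changes needed.
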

\begin{proof}
Using Lemma \ref{propoursegrechern}, the result follows from Proposition \ref{letranswithflat} by Remark \ref{rema1}.
\end{proof}
\begin{coro} (Hironaka \cite{hironaka}, Kleiman \cite{kleiman}) If $X$ is smooth and $2d<{\rm dim}\,X$, any cycle $z\in {\rm CH}_d(X)_\mathbb{Q}$ is rationally equivalent to a smooth cycle with $\mathbb{Q}$-coefficients. More precisely $(c-1)! z$ is smoothable, where $c:=n-d$ is the codimension of $z$.
\end{coro}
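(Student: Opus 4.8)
The plan is to deduce this classical statement directly from the two ingredients already assembled upstream: the elementary fact, recalled in the introduction, that $(c-1)!\,{\rm CH}^c(X)\subset {\rm CH}^c(X)_{\rm Ch}$, together with Corollary \ref{coropropoursegrechern}, which yields the smoothability of Chern-type cycles in the range $2d<n$. Since smoothability is manifestly preserved under $\mathbb{Q}$-linear combinations, it suffices by linearity to treat the class $z=[Z]$ of a single irreducible $d$-dimensional subvariety, and the whole content is really the integral assertion that $(c-1)!z$ is smoothable; the $\mathbb{Q}$-coefficient conclusion then follows by dividing by the nonzero integer $(c-1)!$ in ${\rm CH}_d(X)_{\mathbb{Q}}$.

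First I would recall why $(c-1)!z\in {\rm CH}^c(X)_{\rm Ch}$. Taking a finite locally free resolution of the structure sheaf $\mathcal{O}_Z$ on $X$, the Whitney formula expresses the Chern classes of $\mathcal{O}_Z$ as integral polynomials in Chern classes of locally free sheaves, so they all lie in the subring ${\rm CH}^*(X)_{\rm Ch}$. The standard formula (\ref{eqformulap1fact}) then identifies the first nonvanishing one: since $\mathcal{O}_Z$ is supported in codimension $c$, its top relevant Chern class is $c_c(\mathcal{O}_Z)=(-1)^{c-1}(c-1)![Z]$. Hence $(c-1)![Z]=(-1)^{c-1}c_c(\mathcal{O}_Z)$ belongs to ${\rm CH}^c(X)_{\rm Ch}={\rm CH}_d(X)_{\rm Ch}$, the sign being irrelevant as the latter is a subgroup.

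Next, with $(c-1)!z\in {\rm CH}_d(X)_{\rm Ch}$ now in hand and the hypothesis $2d<n$ in force, I would simply invoke Corollary \ref{coropropoursegrechern}: it produces a smooth cycle $Z'=\sum_i n_iZ'_i$ with each $Z'_i\subset X$ smooth and $(c-1)!z$ rationally equivalent to $Z'$ in ${\rm CH}_d(X)$. This is exactly the statement that $(c-1)!z$ is smoothable. Dividing by $(c-1)!$ then exhibits $z$ itself as rationally equivalent to the $\mathbb{Q}$-cycle $\tfrac{1}{(c-1)!}Z'$, which concludes the proof.

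As for where the real work lies: within this corollary there is essentially no obstacle, everything having been reduced to results already established. The genuine difficulty is hidden in Corollary \ref{coropropoursegrechern} and the transversality Proposition \ref{letranswithflat} on which it rests, namely the realization of Chern (equivalently Segre) classes as smooth pushforwards of products of divisors from projective bundles, which are then put in general position so that, in the range $2d<n$, their images remain smooth. That is precisely the step that does not extend to integral coefficients without the $(c-1)!$ factor, and it is what forces the denominator in the statement.
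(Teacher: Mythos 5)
Your proposal is correct and follows essentially the same route as the paper: reduce to $z=[Z]$, use formula (\ref{eqformulap1fact}) (i.e.\ $c_{c}(\mathcal{O}_Z)=(-1)^{c-1}(c-1)![Z]$, obtained from a locally free resolution of $\mathcal{O}_Z$ and Grothendieck--Riemann--Roch) to place $(c-1)!z$ in ${\rm CH}_d(X)_{\rm Ch}$, then apply Corollary \ref{coropropoursegrechern}. Your added remarks on where the real work lies are accurate but not needed for the argument.
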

\begin{proof} Indeed, it suffices to prove the result when $z=[Z]$ is the class of a subvariety $Z$ of $X$ of dimension $d$. Let $\mathcal{O}_Z$ be the structural sheaf of $Z$, seen as a coherent sheaf on $X$.  It follows from the Grothendieck-Riemann-Roch formula (see   \cite[Example 15.3.1]{fulton})  that
\begin{eqnarray}\label{eqformulap1fact} c_{n-d}(\mathcal{O}_Z)=(-1)^{n-d-1}(n-d-1)![Z]\,\,\in\,\,{\rm CH}_d(X)_{\rm Ch}\subset {\rm CH}_d(X),
\end{eqnarray} so Corollary  \ref{coropropoursegrechern} applies.
\end{proof}
\begin{rema}{\rm In \cite{hironaka}, which does not use Segre classes but the splitting principle to reduce Chern classes to products of divisors, the coefficient $(c-1)!$ appears multiplied by a constant, which is possibly $1$.}
\end{rema}
\begin{rema}{\rm Kleiman in \cite{kleiman} argues differently by studying singularities of Schubert varieties and    proves a result which is of a different nature, as it also includes the cases where $n=2d$ or $2d-1$, which are above the middle dimension.}
\end{rema}
Combining  Lemma  \ref{propoursegrechern} and Proposition  \ref{letranswithflat}, we get the following criterion
\begin{prop}\label{proflatpushf}  Let $\phi: Y\rightarrow X$ be a proper flat morphism between smooth varieties. Then for any cycle $z\in{\rm CH}_d(Y)_{\rm Ch}$ with $2d<n={\rm dim}\,X$, the class $z'=\phi_*z\in{\rm CH}_d(X)$ is smoothable on $X$.
\end{prop}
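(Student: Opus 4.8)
The plan is to combine the two preceding results exactly as the section heading suggests: first use Lemma~\ref{propoursegrechern} to rewrite the Chern-type cycle $z$ as a flat pushforward of a \emph{product of divisors}, then absorb the auxiliary morphism into $\phi$ and apply the transversality criterion of Proposition~\ref{letranswithflat}.

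First I would apply Lemma~\ref{propoursegrechern} to the cycle $z\in{\rm CH}_d(Y)_{\rm Ch}$, taking $Y$ (rather than $X$) as the base variety. This produces a smooth variety $P$, a smooth projective morphism $f\colon P\to Y$, and a cycle $w\in{\rm CH}_d(P)$ lying in the subring of ${\rm CH}^*(P)$ generated by divisors, with $f_*w=z$. I then set $\Phi:=\phi\circ f\colon P\to X$. Since $f$ is smooth (hence flat) and $\phi$ is flat, $\Phi$ is flat; since $f$ is projective and $\phi$ is proper, $\Phi$ is proper; and $P$ is smooth. By functoriality of pushforward, $z'=\phi_*z=\phi_*f_*w=\Phi_*w$. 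The problem is thus reduced to showing that $\Phi_*w$ is smoothable, where $\Phi\colon P\to X$ is a proper flat morphism between smooth varieties and $w\in{\rm CH}_d(P)$ is a product of divisor classes with $2d<n=\dim X$.

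Next I would represent $w$ by a $\mathbb{Z}$-combination of smooth subvarieties in general position. Writing $w$ as a monomial $\prod_i c_1(L_i)^{a_i}$ in divisor classes with $\sum_i a_i=\dim P-d$, I decompose each line bundle as a difference of very ample ones, $L_i\cong A_i\otimes B_i^{-1}$, so that $c_1(L_i)=\alpha_i-\beta_i$ with $\alpha_i,\beta_i$ represented by general members of $|A_i|,|B_i|$. Expanding the product yields a signed sum $w=\sum_j n_j[Z_j]$ in which each $Z_j$ is a complete intersection of $\dim P-d$ general very ample divisors on $P$. By Bertini's theorem in characteristic $0$, each $Z_j$ is a smooth subvariety of dimension $d$, and by Remark~\ref{rema1} a sufficiently general such complete intersection is in general position in the sense required by Proposition~\ref{letranswithflat}; as there are only finitely many terms, the $Z_j$ can be taken in general position with respect to $\Phi$.

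Finally I would invoke Proposition~\ref{letranswithflat}. Since $2d<n$ and $\Phi$ is proper, each restriction $\Phi|_{Z_j}$ is proper, and the proposition gives that $\Phi|_{Z_j}\colon Z_j\to\Phi(Z_j)$ is an isomorphism onto a smooth closed subvariety $\Phi(Z_j)\subset X$. Hence $\Phi_*[Z_j]=[\Phi(Z_j)]$, and
$$z'=\Phi_*w=\sum_j n_j[\Phi(Z_j)]$$
is a $\mathbb{Z}$-combination of classes of smooth subvarieties of $X$, i.e.\ smoothable. The one step demanding genuine care is the representation of the product $w$ by smooth subvarieties in general position: the tautological (Hopf) classes produced by Lemma~\ref{propoursegrechern} are only \emph{relatively} ample, so one cannot cut them out by honest hyperplane sections directly; it is the difference-of-very-ample decomposition followed by repeated Bertini that turns the monomial into a signed sum of smooth complete intersections to which Proposition~\ref{letranswithflat} applies.
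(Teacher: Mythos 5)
Your proposal is correct and follows essentially the same route as the paper: apply Lemma~\ref{propoursegrechern} over $Y$, compose the resulting smooth morphism with $\phi$ to get a proper flat morphism to $X$, and conclude via Remark~\ref{rema1} and Proposition~\ref{letranswithflat}. The only difference is that you spell out the step the paper compresses into ``By Remark~\ref{rema1}'' — namely the difference-of-very-ample decomposition and Bertini needed to represent the divisor monomials by general smooth complete intersections — which is a faithful expansion rather than a deviation.
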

\begin{proof}  By Lemma \ref{propoursegrechern}, the cycles $z$ as above are of the form
$\pi_* z'$ for a smooth proper morphism $\pi: P\rightarrow Y$ and for some cycle $z'\in {\rm CH}(P)$ which is a combination with integral coefficients  of  intersections of divisors on $P$. By Remark \ref{rema1}, Proposition \ref{letranswithflat} applies to $z'$ and the flat morphism $\phi\circ \pi:P\rightarrow X$, proving the statement.
\end{proof}

\section{Flat pushforwards of Chern classes  \label{secproofflpsh}}
\subsection{Comments on Definition \ref{defiflpshch}\label{secdefi}}
We start with the following remarks on Definition \ref{defiflpshch}.
\begin{rema}\label{remaintdiv} {\rm   By Lemma \ref{propoursegrechern}, in the definition of
${\rm CH}(X)_{\rm fl_*Ch}$ and  ${\rm CH}(X)_{\rm sm_*Ch}$,  we can replace ``elements of ${\rm CH}(P)_{\rm Ch}$'' by ``intersections of divisors on $P$''. Indeed, if $Y$ is smooth, $p:Y\rightarrow X$ is a proper  flat morphism, and $z\in {\rm CH}_d(Y)_{\rm Ch}$, there exist
by Lemma \ref{propoursegrechern} a smooth variety $P$ and a smooth morphism $p':P\rightarrow Y$ such that
$$z=p'_*(w)\,\,{\rm in}\,\, {\rm CH}_d(Y),$$ where $w$ belongs to the subring of ${\rm CH}(P)$ generated by divisors. The morphism $p\circ p': P\rightarrow X$ is flat and projective, and $p_*z=(p\circ p')_*w$.}
\end{rema}
\begin{rema} \label{remastable} If $f: Y\rightarrow X$ is a flat (resp. smooth)  morphism between smooth  projective varieties,
one has
$f_*({\rm CH}(Y)_{\rm fl_*Ch})\subset {\rm CH}(X)_{\rm fl_*Ch}$, resp. $f_*({\rm CH}(Y)_{\rm sm_*Ch})\subset {\rm CH}(X)_{\rm sm_*Ch}$.
\end{rema}
Let us now establish a few elementary facts.
\begin{lemm} \label{remaalgrat} (i)   One has ${\rm CH}(X)_{\rm Ch}\subset {\rm CH}(X)_{\rm sm_*Ch}\subset {\rm CH}(X)_{\rm fl_*Ch} $.

(ii) The subgroup  ${\rm CH}(X)_{\rm sm_*Ch}$ is a subring of ${\rm CH}(X)$.

(iii) The subgroup  ${\rm CH}(X)_{\rm fl_*Ch}$ is a module over  the ring ${\rm CH}(X)_{\rm sm_*Ch}$.

\end{lemm}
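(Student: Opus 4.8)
The plan is to dispose of (i) straight from the definitions and to reduce both (ii) and (iii) to a single fiber-product formula for intersections of pushforwards. For the first inclusion in (i), take $\pi={\rm id}_X$, which is smooth and proper: any $z\in{\rm CH}(X)_{\rm Ch}$ equals $({\rm id}_X)_*z$, so it is a smooth pushforward of a Chern class. The second inclusion ${\rm CH}(X)_{\rm sm_*Ch}\subset{\rm CH}(X)_{\rm fl_*Ch}$ holds because every smooth morphism is flat, so each generator of the former group is a generator of the latter.

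For (ii) and (iii) the key computation is the following. Let $\pi:P\rightarrow X$ and $\rho:Q\rightarrow X$ be proper morphisms with $P,Q,X$ smooth, form the fiber product $P\times_XQ$ with projections $p,q$, and set $\sigma=\pi\circ p=\rho\circ q$. Whenever $\pi$ is flat, for $\alpha\in{\rm CH}(P)$ and $\beta\in{\rm CH}(Q)$ one has
$$\pi_*\alpha\cdot\rho_*\beta=\sigma_*(p^*\alpha\cdot q^*\beta)\quad\text{in }{\rm CH}(X).$$
This follows by combining the projection formula applied to $\pi$, the base-change identity $\pi^*\rho_*=p_*q^*$ for the Cartesian square (valid since $\pi$ is flat and $\rho$ proper, cf. \cite{fulton}), and the projection formula applied to the proper morphism $p$; the left-hand intersection products are defined because $X$ is smooth, while the right-hand product requires $P\times_XQ$ to be smooth. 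Moreover, if $\alpha\in{\rm CH}(P)_{\rm Ch}$ and $\beta\in{\rm CH}(Q)_{\rm Ch}$, then $p^*\alpha\cdot q^*\beta\in{\rm CH}(P\times_XQ)_{\rm Ch}$, since Chern classes commute with pullback and ${\rm CH}(\cdot)_{\rm Ch}$ is a subring.

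To prove (ii), apply the formula with $\pi,\rho$ both smooth and $\alpha\in{\rm CH}(P)_{\rm Ch}$, $\beta\in{\rm CH}(Q)_{\rm Ch}$. Then $q:P\times_XQ\rightarrow Q$ is the base change of the smooth morphism $\pi$, hence smooth, so $P\times_XQ$ is smooth over the smooth variety $Q$; and $\sigma=\pi\circ p$ is smooth and proper. Thus $\pi_*\alpha\cdot\rho_*\beta=\sigma_*(p^*\alpha\cdot q^*\beta)$ is a smooth pushforward of an element of ${\rm CH}(P\times_XQ)_{\rm Ch}$, hence lies in ${\rm CH}(X)_{\rm sm_*Ch}$. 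Together with $1=[X]\in{\rm CH}(X)_{\rm sm_*Ch}$ from (i), this exhibits the group as a subring.

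For (iii), take $\pi$ smooth (the ${\rm CH}(X)_{\rm sm_*Ch}$ factor) and $\rho$ merely flat (the ${\rm CH}(X)_{\rm fl_*Ch}$ factor). The point requiring care, and the reason smoothness of $\pi$ rather than mere flatness is needed, is again the smoothness of $P\times_XQ$: since $q$ is the base change of the smooth morphism $\pi$ it is smooth, so $P\times_XQ$ is smooth over $Q$. Now $\sigma=\rho\circ q$ is a composition of the flat morphism $\rho$ with the smooth (hence flat) morphism $q$, so it is flat and proper, and $\sigma_*(p^*\alpha\cdot q^*\beta)\in{\rm CH}(X)_{\rm fl_*Ch}$. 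This yields the module property. The only genuinely delicate issue throughout is guaranteeing that the fiber product is smooth so that the intersection product on it and the base-change formula both apply; this is precisely what forces $\pi$ to be smooth in (iii), and it explains why one does not expect ${\rm CH}(X)_{\rm fl_*Ch}$ to be a ring.
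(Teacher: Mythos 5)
Your proof is correct and follows essentially the same route as the paper: both reduce (ii) and (iii) to the identity $\pi_*\alpha\cdot\rho_*\beta=\sigma_*(p^*\alpha\cdot q^*\beta)$ on the fiber product $P\times_XQ$, whose smoothness is guaranteed by the smoothness of one of the two morphisms, with $\sigma$ smooth (resp.\ flat) accordingly. The only cosmetic difference is that the paper first replaces elements of ${\rm CH}(P)_{\rm Ch}$ by intersections of divisors (via Lemma \ref{propoursegrechern}) before pulling back, whereas you pull back Chern classes directly; both are valid.
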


\begin{proof} (i)  The second inclusion is obvious since smoothness implies flatness.

(ii) and (iii)  Let $p_1: P_1\rightarrow X$, $p_2:  P_2\rightarrow X$ be proper morphisms with $P_1,\,P_2,\,X$ smooth and assume $p_1$ is smooth, $p_2$ is flat. Then $P_{12}:=P_1\times_XP_2$ is smooth. If $Z_1$, resp. $Z_2$ are intersections of divisors on $P_1$, resp. $P_2$,
their pull-backs $Z'_1$, resp. $Z'_2$ to  $P_{12}$ via the  projections
$$p'_1: P_{12}\rightarrow P_2,\,\,p'_2: P_{12}\rightarrow P_1$$
are also  intersections of divisors, and the projection formula
gives
$$ p_{1*}Z_1\cdot p_{2*}Z_2=p_{12*}(Z'_1\cdot Z'_2)\,\,{\rm in}\,\,{\rm CH}(X),$$
where $p_{12}: P_1\times_XP_2\rightarrow X$  is the natural morphism.
This proves (ii) and (iii) since  $p_{12}$ is  flat and it is  smooth if $p_2$ is smooth.
\end{proof}

Another useful lemma is the following
\begin{lemm} \label{lebasicpi*} Let $\phi: Y_1\rightarrow Y_2$ be a  morphism, with $Y_1,\,Y_2$ smooth. Then

(i)  One has
\begin{eqnarray} \label{eqincldu1709stable1} \phi^*({\rm CH}(Y_2)_{\rm sm_*Ch})\subset {\rm CH}(Y_1)_{\rm sm_*Ch},\end{eqnarray}

(ii) If $\phi$ is smooth, then
\begin{eqnarray} \label{eqincldu1709stable2}\phi^*({\rm CH}(Y_2)_{\rm fl_*Ch})\subset {\rm CH}(Y_1)_{\rm fl_*Ch}.\end{eqnarray}
\end{lemm}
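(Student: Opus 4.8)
The plan is to reduce both statements to a single generator and then run a base-change argument in a fibre square. By Definition \ref{defiflpshch} it suffices to treat $z=\pi_*z'$, where $\pi\colon P\rightarrow Y_2$ is proper with $P$ smooth, $z'\in{\rm CH}(P)_{\rm Ch}$, and where $\pi$ is smooth in case (i), resp.\ flat in case (ii). I would form the fibre product $P':=Y_1\times_{Y_2}P$, with its projections $\pi'\colon P'\rightarrow Y_1$ and $\phi'\colon P'\rightarrow P$, giving a Cartesian square
\[
\begin{matrix}
P'&\stackrel{\phi'}{\rightarrow}&P\\
\pi'\downarrow&&\downarrow\pi\\
Y_1&\stackrel{\phi}{\rightarrow}&Y_2.
\end{matrix}
\]
Since properness, flatness and smoothness are all stable under base change, $\pi'$ is proper and is smooth in case (i), resp.\ flat in case (ii).

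The crucial point, and the place where the hypotheses of (i) and (ii) enter differently, is the smoothness of $P'$. In case (i), smoothness of $\pi$ base-changes to smoothness of $\pi'\colon P'\rightarrow Y_1$, and since $Y_1$ is smooth this forces $P'$ smooth; no hypothesis on $\phi$ is needed. In case (ii), $\pi'$ is only flat, which does not suffice; instead one uses that $\phi$ is smooth, so its base change $\phi'\colon P'\rightarrow P$ is smooth, and since $P$ is smooth this again yields $P'$ smooth. This is precisely why smoothness of $\phi$ is assumed in (ii) and may be dropped in (i).

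It remains to identify $\phi^*z$. I would invoke the compatibility of pullback with proper pushforward in a Cartesian square. In case (ii), $\phi$ is flat, so this is the flat base-change formula of \cite{fulton}, giving $\phi^*\pi_*z'=\pi'_*(\phi')^*z'$. In case (i), $\phi$ is an arbitrary morphism of smooth varieties, hence a local complete intersection morphism; factoring it through its graph $Y_1\hookrightarrow Y_1\times Y_2$ (a regular embedding, as $Y_1$ is smooth) followed by the flat projection to $Y_2$, the refined Gysin pullback $\phi^*$ again commutes with $\pi_*$, so the same identity $\phi^*\pi_*z'=\pi'_*(\phi')^*z'$ holds. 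Finally, by functoriality of Chern classes, $(\phi')^*z'$ is the same polynomial in the Chern classes of the pulled-back bundles $(\phi')^*E_i$ on $P'$, so $(\phi')^*z'\in{\rm CH}(P')_{\rm Ch}$. Thus $\phi^*z=\pi'_*(\phi')^*z'$ is a pushforward along the smooth (resp.\ flat) proper morphism $\pi'$ of a class in ${\rm CH}(P')_{\rm Ch}$ with $P'$ smooth, i.e.\ an element of ${\rm CH}(Y_1)_{\rm sm_*Ch}$ (resp.\ ${\rm CH}(Y_1)_{\rm fl_*Ch}$). The only genuine subtlety is the smoothness of $P'$, which is what dictates the two different hypotheses; the base-change identity itself is standard.
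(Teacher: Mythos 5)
Your proof is correct and follows essentially the same route as the paper: form the fibre square, observe that $P'$ is smooth for two different reasons in the two cases (smoothness of $\pi$ over smooth $Y_1$ in case (i), smoothness of $\phi'$ over smooth $P$ in case (ii)), and conclude by the base-change identity $\phi^*\pi_*z'=\pi'_*(\phi')^*z'$. If anything, you are slightly more careful than the paper on one point: in case (i) the paper cites the flat base-change formula of \cite[Proposition 1.7]{fulton} even though $\phi$ need not be flat, whereas you correctly note that one should instead invoke the compatibility of the lci (Gysin) pullback with proper pushforward.
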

\begin{proof} Let $\psi: W\rightarrow Y_2$ be a flat (resp. smooth) projective morphism. Then
$$\psi_1: W_1:= W\times_{Y_2} Y_1\rightarrow Y_1$$
is flat (resp. smooth). Furthermore, if either $\psi$ is smooth (Case (i))  or  $\phi$ is smooth (Case (ii)),  $W_1$ is smooth.

Let $\phi_1: W_1\rightarrow W$ be the first projection. If  $\gamma\in {\rm CH}(W)_{\rm Ch}$, we have
$\phi_1^*\gamma\in {\rm CH}(W_1)_{\rm Ch}$, and furthermore we have by \cite[Proposition 1.7]{fulton}
$$ \psi_{1*}(\phi_1^*\gamma)=\phi^*(\psi_*\gamma)\,\,{\rm in}\,\,{\rm CH}(Y_1).$$
This proves (\ref{eqincldu1709stable1}) and (\ref{eqincldu1709stable2}).
\end{proof}

\subsection{Examples of cycles not in ${\rm CH}(X)_{\rm Ch}$ \label{secexamples}}
Theorem \ref{theo0cyclesflpsh}  is interesting when the considered cycles do not belong to ${\rm CH}(X)_{\rm Ch}$. Besides the  case of $0$-cycles on very general abelian  varieties with high degree polarization (see \cite{debarre}),  some   hypersurfaces in projective space provide  such examples. For example, we have
\begin{lemm}\label{leexample} Let  $X\subset \mathbb{P}^4$  be a very general hypersurface of degree $64$. Then the class of a point  $x\in X$  does not belong to ${\rm CH}_0(X)_{\rm Ch}$.   More precisely, for  any vector bundle $E$ on $X$, the degree ${\rm deg}\,c_3(E)$ is divisible by $2$.
\end{lemm}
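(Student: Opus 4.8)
The plan is to combine three ingredients: the Grothendieck--Lefschetz theorem to control $\mathrm{Pic}(X)$, Grothendieck--Riemann--Roch on the threefold $X$, and a divisibility property of the degrees of curves on a very general hypersurface. First I would record the cohomological inputs. Since $X$ is a smooth threefold hypersurface in $\mathbb{P}^4$, the Grothendieck--Lefschetz theorem gives ${\rm Pic}(X)=\mathbb{Z}\cdot h$ with $h=c_1(\mathcal{O}_X(1))$ (this holds for \emph{every} smooth such $X$, not just the very general one); in particular $c_1(E)=a\,h$ for some $a\in\mathbb{Z}$, and $\int_X h^3=64$. By adjunction $c(T_X)=(1+h)^5(1+64h)^{-1}$, which yields $c_1(T_X)=(5-64)h$ and $c_2(T_X)=(10-5\cdot64+64^2)h^2$; this is all the Todd-class data one needs.

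The key computation is GRR applied to the virtual bundle $E-({\rm rk}\,E)\mathcal{O}_X$, so that the rank term drops out: $\chi(X,E)-({\rm rk}\,E)\chi(X,\mathcal{O}_X)=\int_X\bigl({\rm ch}(E)-{\rm rk}\,E\bigr)\,{\rm td}(X)\in\mathbb{Z}$. Writing $\beta:=\deg\bigl(h\cdot c_2(E)\bigr)=\int_X h\,c_2(E)$ and $\gamma:=\deg c_3(E)=\int_X c_3(E)$, and substituting $c_1(E)=ah$ together with the intersection numbers above, I would expand and collect terms. The a priori fractional contributions organize as $\tfrac12\gamma+\tfrac12(1+a)\beta+(\text{an integer})$, where the integrality of the purely $a$-dependent part uses $64\equiv0$ together with the elementary congruence $a(2a^2+1)\equiv0\pmod3$. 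Since the whole expression is an integer, its integrality forces $\gamma+(1+a)\beta$ to be even, i.e.
$$\deg c_3(E)\ \equiv\ (1+a)\,\deg\bigl(h\cdot c_2(E)\bigr)\pmod 2.$$
In particular $\deg c_3(E)$ is even as soon as $\beta=\deg\bigl(h\cdot c_2(E)\bigr)$ is even.

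It therefore remains to prove that $\beta$ is even, and this is the heart of the matter. The class $c_2(E)\in{\rm CH}^2(X)={\rm CH}_1(X)$ is an algebraic $1$-cycle, so $\beta$ is a $\mathbb{Z}$-combination of degrees of curves on $X$; as degree is additive, it suffices to know that every curve on the very general $X$ has even degree. This is where very-generality and the specific value $64=4^3$ enter, through a Kollár-type divisibility theorem for degrees of subvarieties of very general hypersurfaces: for $d=64$ the degrees of $1$-cycles on the very general degree-$d$ threefold in $\mathbb{P}^4$ are all even. I expect the proof of \emph{this} statement to be the main obstacle, the Riemann--Roch bookkeeping above being routine once it is in hand; the mechanism is to spread a hypothetical odd-degree curve out over the universal family $\mathcal{X}\to B$ of degree-$64$ hypersurfaces and specialize to a carefully chosen degenerate member whose structure forces even intersection numbers.

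Finally, the same input disposes of the remaining codimension-$3$ Chern monomials needed for the stated conclusion: $\deg\bigl(c_1(E')\cdot c_2(E'')\bigr)=a'\,\beta''$ is even, and $\deg\bigl(c_1(E')c_1(E'')c_1(E''')\bigr)$ is a multiple of $\int_X h^3=64$. Hence every class in ${\rm CH}_0(X)_{\rm Ch}$ has even degree, whereas a point has degree $1$, so $[x]\notin{\rm CH}_0(X)_{\rm Ch}$.
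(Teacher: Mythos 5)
Your proof is correct and follows essentially the same route as the paper's: a Riemann--Roch computation showing $\chi(X,E)\equiv\tfrac12\deg c_3(E)$ modulo integers once the $c_1c_2$-type terms are known to be even, combined with Koll\'ar's divisibility theorem that every curve on a very general degree-$64$ hypersurface in $\mathbb{P}^4$ has even degree (which the paper likewise only cites). The only differences are harmless bookkeeping: the paper absorbs the rank-and-$c_1(E)$-only terms into $(r-1)\chi(X,\mathcal{O}_X)+\chi(X,\det E)$ instead of checking their integrality by explicit congruences, and it leaves the remaining codimension-$3$ Chern monomials (which you treat explicitly) implicit.
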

\begin{proof}     Let $E$ be a vector bundle of rank $r$  on $X$.  By the Hirzebruch-Riemann-Roch formula, the holomorphic Euler-Poincar\'e characteristic of $E$ is  given by the formula
\begin{eqnarray}\label{eqchihol}  \chi(X,E)= \int_X {\rm ch}(E){\rm td}(X)=\alpha c_3(E)+\beta c_2(E)c_1(E)+\gamma c_2(E) c_1(X)+  q(E),\end{eqnarray}
where the constants $\alpha,\,\beta,\,\gamma$, which are independent of $E$, are rational and the quantity $q(E)$ is the part of the Riemann-Roch polynomial (in the Chern classes of $E$) which involves only  $c_1(E)$ and the rank of $E$, and is  an integer since it is equal to
$$\int_X(r-1){\rm td}_3(X)+{\rm ch}({\rm det}\,E){\rm td}(X)=(r-1)\chi(X,\mathcal{O}_X)+\chi(X,{\rm det}\,E).$$
The constants $\alpha$ and $\beta$ are obtained by expressing ${\rm ch}_3(E)$ as a polynomial in the Chern classes $c_i(E)$. One gets
\begin{eqnarray} \label{eqalphabeta} \alpha=\tfrac12,\,\beta= -\tfrac12.\end{eqnarray}
Finally, the constant $\gamma$ is obtained by expressing ${\rm ch}_2(E){\rm td}_1(X)$ using the Chern classes of $E$. One gets
\begin{eqnarray} \label{eqgamma} \gamma=-\tfrac12.\end{eqnarray}

It is proved in \cite{kollar} that for $X$ as above, any curve $C\subset X$ has degree divisible by $2$.  It follows that the numbers $\int_Xc_2(E)c_1(E)$ and $\int_Xc_2(E) c_1(X)$ are even. We thus deduce from
(\ref{eqalphabeta}) and  (\ref{eqgamma}) that
$\chi(X,E)=\frac{1}{2}\int_Xc_3(E)+ s$, where $s$ is an integer. Thus the degree of $c_3(E)$ has to be an even integer.
\end{proof}
\begin{rema}{\rm We see from the proof above that the obstruction to the existence of a vector bundle $E$ on $X$ (or more generally an element of $K_0(X)$) with ${\rm deg}\,c_3(E)=1$ comes from the defect of the integral Hodge conjecture for degree $4$ Hodge classes on $X$. Conversely, if the integral Hodge conjecture for degree $4$ Hodge classes on $X$ holds true, then, denoting by  $H\in{\rm CH}^1(X)$  the class of a hyperplane section, the generator  $a\in H^4(X,\mathbb{Z})$ such that $\langle a,[H]\rangle=1$  is algebraic, that is, $a=[Z]$ for some $1$-cycle $Z\in {\rm CH}_1(X)$. As we have
$$ {\rm CH}_1(X)={\rm CH}^2(X)={\rm CH}^2(X)_{\rm Ch}$$
by formula (\ref{eqformulap1fact}),
the cycle $Z$ belongs to ${\rm CH}^2(X)_{\rm Ch}$, so the cycle $H\cdot Z$ belongs to ${\rm CH}^3(X)_{\rm Ch}$. Hence there exists a degree $1$ element in
${\rm CH}^3(X)_{\rm Ch}$ in this case.}
\end{rema}
 Other examples of smooth projective varieties $X$ for which ${\rm CH}(X)\not={\rm CH}(X)_{\rm Ch}$ are given by generalized flag manifolds for certain affine algebraic groups with torsion index $>1$
 (see \cite{demazure}  and \cite{totaro}, \cite{totaro2} where this notion is discussed and computed for many groups).   Merkurjev proved in  \cite{merkurjev} that for a simply connected semisimple algebraic group $G $, and for a closed subgroup $H$, the $K_0$-ring of $G/H$  is generated by classes of homogeneous vector bundles on $G/H$ that come from representations of $H$. If furthermore $H$ is a Borel subgroup of $G$, then homogeneous vector bundles on $G/H$ coming from representations of $H$  are direct sums of line bundles. In the last case, it follows that the subgroup
 $${\rm CH}_0(G/H)_{\rm Ch}\subset {\rm CH}_0(G/H)$$
 is also the subgroup generated by products of divisor classes. By definition of the torsion index of $G$, the index of the latter subgroup is a multiple of  the torsion index of $G$. The computations in \cite{totaro}, \cite{totaro2} thus give plenty of examples where ${\rm CH}_0(G/H)_{\rm Ch}\subset {\rm CH}_0(G/H)$ is a proper subgroup.

\subsection{ \label{sectheoles3prop} Some stability results for ${\rm CH}(X)_{\rm fl_*Ch}$  }

We will give in this section  the  proof of Theorem \ref{theopourfilt}.
 It  will rely on the following three propositions.
\begin{prop}\label{prohypersurafce} Let $X,\,Y$ be  smooth projective varieties with  ${\rm dim}\,Y={\rm dim}\,X-1$, and let $j:Y\rightarrow X$ be a  finite morphism.  Then
$$j_*({\rm CH}(Y)_{\rm fl_*Ch})\subset {\rm CH}(X)_{\rm fl_*Ch}.$$
\end{prop}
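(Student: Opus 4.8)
The plan is to unwind the definition so as to reduce everything to pushing forward products of divisors along a morphism whose image is the divisor $D=j(Y)$, and then to \emph{flatten} that pushforward by a deformation-to-the-normal-cone construction supplying a transverse line-bundle direction.

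First I would reduce. By Remark \ref{remaintdiv}, a class in ${\rm CH}(Y)_{\rm fl_*Ch}$ is a finite sum of terms $p_*w$, where $p\colon P\to Y$ is a proper flat morphism with $P$ smooth projective and $w$ is an intersection of divisors on $P$. Since $j_*p_*w=(j\circ p)_*w$ and a composition of a finite morphism with a proper flat one is proper, I am reduced to the following statement: if $g=j\circ p\colon P\to X$ is proper with $P$ smooth projective, so that $g(P)\subseteq D:=j(Y)$ is a divisor in $X$, then $g_*w\in{\rm CH}(X)_{\rm fl_*Ch}$ for every product of divisors $w$ on $P$. The essential difficulty is that $g$ is \emph{not} flat: its image has codimension $1$, so fibre dimensions jump and one cannot quote Remark \ref{remastable} directly.

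To manufacture flatness I would introduce the normal direction transverse to $D$. Consider $\mathcal M:=B_{D\times\{0\}}(X\times\mathbb{A}^1)$ (suitably compactified over $X$ by a $\mathbb{P}^1$-bundle), with projection $\pi\colon\mathcal M\to X$ factoring through $X\times\mathbb{A}^1\to X$. Then $\mathcal M$ is smooth, all fibres of $\pi$ are one-dimensional, and since $\mathcal M$ is Cohen--Macaulay over the regular base $X$, miracle flatness makes $\pi$ flat. Its exceptional locus is the projectivised normal cone, a $\mathbb{P}^1$-bundle over $D$ transverse to $D$; this is exactly the extra direction that makes $\pi$ flat while still ``seeing'' $D$. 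The aim is to realize $g_*w$ as $\pi_*$ of a class built from divisors and supported on this exceptional locus, so that Remark \ref{remastable}, applied to the flat $\pi$, places $g_*w$ in ${\rm CH}(X)_{\rm fl_*Ch}$. To account for the cover $j\colon Y\to D$ and for the fact that $w$ lives above $Y$ rather than on $D$, I would embed $Y$ relatively over $D$ into a projective bundle (enlarging the $\mathbb{P}^1$-bundle to a $\mathbb{P}^k$-bundle if necessary) so that the resulting copy of $Y$ becomes a smooth subvariety, finite over $D$, that is a \emph{smooth complete bundle-section} inside a fibre of $\pi$. This reduces matters to the case where $j$ is the inclusion of a smooth divisor, which is handled directly: such a divisor is the zero locus of a section of a line bundle, and the self-intersection/excess-intersection formula expresses $j_*$ of a product of divisors through Chern classes that are pulled back and then pushed forward flatly, using Lemma \ref{propoursegrechern}.

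The main obstacle is precisely that $D=j(Y)$ may be \emph{singular} and that $j$ may be a nontrivial \emph{finite cover}. The normal cone of a singular $D$ is not a vector bundle, so the clean $\mathbb{P}^1$-bundle picture must be replaced by a resolved or normalized version, and one must keep careful track of cycle multiplicities so that $\pi_*$ of the constructed class equals $g_*w$ \emph{exactly}, not merely up to a nonzero multiple (which is what distinguishes the integral statement here from the rational statements of Hironaka and Kleiman recalled in the introduction). Controlling these multiplicities while keeping the total space smooth and the distinguished subvariety a complete bundle-section is the heart of the argument; the hypothesis ${\rm dim}\,Y={\rm dim}\,X-1$ is used in an essential way, since it is what forces the transverse direction to be a single line bundle and guarantees that the one-dimensional-fibre flatness criterion applies.
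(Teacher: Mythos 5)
Your reduction to the statement ``$g_*w\in{\rm CH}(X)_{\rm fl_*Ch}$ for $g=j\circ p$ and $w$ a product of divisors'' is correct and matches the paper's starting point, and you have correctly located the difficulty: $g$ is not flat because its image lies in the divisor $D=j(Y)$. But the flattening device you propose does not close the gap. Blowing up $D\times\{0\}$ in $X\times\mathbb{A}^1$ only works cleanly when $D$ is smooth; when $D$ is singular the total space need not be smooth (nor obviously Cohen--Macaulay, so miracle flatness is not available as stated), the exceptional locus is the projectivized normal \emph{cone} rather than a $\mathbb{P}^1$-bundle, and the construction forgets the finite cover $j\colon Y\to D$ entirely, so recovering $g_*w$ with the correct integral multiplicities is exactly the problem you started with. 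You acknowledge all of this and defer it (``Controlling these multiplicities \dots is the heart of the argument''), so what you have is a plan that stops before the essential step, not a proof.

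The idea you are missing is to blow up not the image $D\subset X$ but the \emph{graph} $\Gamma_j\subset Y\times X$. The graph is always smooth (isomorphic to $Y$) regardless of how singular $D$ is, and it has codimension exactly $n=\dim X$ in $Y\times X$. Setting $T=B_{\Gamma_j}(Y\times X)$ with exceptional divisor $E$, the projection $p\colon T\to Y$ is smooth (its fibre over $y$ is the blow-up of $X$ at $j(y)$), and the projection $q\colon T\to X$ is flat: its fibre over $x$ consists of a component birational to $Y$, of dimension $n-1$, together with the fibres of the $\mathbb{P}^{n-1}$-bundle $E\to\Gamma_j$ over the finite set $j^{-1}(x)$, also of dimension $n-1$; equidimensionality of fibres between smooth varieties gives flatness, and this is precisely where both the finiteness of $j$ and the hypothesis $\dim Y=\dim X-1$ enter. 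One then writes $j_*w={\rm pr}_{X*}({\rm pr}_Y^*w\cdot\Gamma_j)$ and $[\Gamma_j]=\pm\,\tau_*[E]^{\,n}$, and the projection formula on $W\times_YT$ (smooth because $p$ is smooth) exhibits $j_*w$ \emph{exactly} as a flat pushforward of a product of divisors, with no multiplicity bookkeeping and no resolution of $D$ needed. This single correspondence-plus-blow-up trick replaces the entire ``resolved or normalized version'' you were hoping to construct.
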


\begin{proof}
Let $T=B_{\Gamma_j}(Y\times X)$ be the smooth projective variety obtained by blowing-up the graph $\Gamma_j$ of $j$ in $Y\times X$. Let
$$\tau: T\rightarrow Y\times X$$
be the blow-up map and let  ${\rm pr}_Y,\,{\rm pr}_{X}$ be the two projections from $Y\times X$ to $Y$ and $X$. We denote
$$p:= {\rm pr}_Y\circ \tau:  T\rightarrow Y,\,\,q:=  {\rm pr}_{X}\circ \tau: T\rightarrow X$$
 the two natural morphisms.
\begin{lemm} \label{lepsmooth} The morphism  $p$ is smooth and the morphism  $q$ is flat.
\end{lemm}
\begin{proof} Indeed, the fiber of $p$ over $y\in Y$ is the blow-up of $X$ along $j(y)$, which is smooth. The fiber of $q$ over  $x\in X$ is  isomorphic to $Y$ when $x\not\in j(Y)$, hence it has dimension $n-1$, $n={\rm dim}\,X$. We claim that all the  fibers of $q$ have dimension $\leq n-1$.  To see this, we observe that
$$q^{-1}(x)=\tau^{-1}(Y\times \{x\})$$
is the set-theoretic union of several  components, some  being contained in the exceptional divisor $E$ over $\Gamma_j$ and mapping via $\tau$ to $(Y\times \{x\})\cap  \Gamma_j$, the other being birational to $Y$. The component which is birational to $Y$  has dimension $ n-1$.  The other components are also of dimension $\leq n-1$, since the morphism $\tau_{\mid E}: E\rightarrow \Gamma_j\cong Y$ has fibers of dimension $n-1$, and $$(Y\times \{x\})\cap  \Gamma_j\cong j^{-1}(x)\subset Y\cong \Gamma_j$$ has dimension $0$ because $j$ is finite.
This proves the claim. The fibers are  thus equidimensional, hence $q$ is flat since both $T$ and $X$ are smooth.
\end{proof}
For any  class $w \in {\rm CH}_d(Y)_{\rm fl_*Ch}$,  there exist by definition  a (not necessarily connected but equidimensional)  smooth projective variety $W$, a flat morphism $\phi: W\rightarrow Y$,  and divisors $D_1,\ldots,\,D_{N-d}\in {\rm CH}^1(W)$, $N:={\rm dim}\,W$, such that
\begin{eqnarray}\label{eqpourYzero} w=\phi_*(D_1\cdot\ldots \cdot D_{N-d})\,\,{\rm in}\,\,{\rm CH}_d(Y).
\end{eqnarray}
 We now observe that
\begin{eqnarray}\label{eqpourYzero1} j_*w= \Gamma_{j*}(w)= {\rm pr}_{X*}({\rm pr}_Y^*w\cdot \Gamma_j) \end{eqnarray} in ${\rm CH}_d(X)$. Furthermore, we have as usual
\begin{eqnarray}\label{eqpourYzero2}  \Gamma_j=\pm \tau_* E^{n} \,\,{\rm in}\,\,{\rm CH}^n(Y\times X),\, n={\rm dim}\,X.\end{eqnarray}
Let now  $$ W_{T}:= W\times_Y T$$
with first projection $p_W$ to $W$, second projection $p_T$ to $T$ and
morphism $$\psi:= q\circ p_T:  W_{T}\rightarrow X.$$
We first observe that $W_{T}$ is smooth by Lemma \ref{lepsmooth}. Next,
 combining (\ref{eqpourYzero}),  (\ref{eqpourYzero1}), and  (\ref{eqpourYzero2}), and applying the projection formula, we get
\begin{eqnarray}\label{eqpourYzerofinal} j_* w=\pm \psi_* (p_W^*(D_1\ldots D_{N-d})\cdot p_T^* E^n)\,\,{\rm in}\,\,{\rm CH}_d(X),
\end{eqnarray}
which proves that $j_*w$ belongs to ${\rm CH}_d(X)_{\rm fl_*Ch}$, since  the  morphism  $\psi$ is flat, being the composition of the two flat  morphisms $q$ and $p_T$. The proof of Proposition \ref{prohypersurafce} is finished.
\end{proof}
 Proposition \ref{prohypersurafce} has the following consequences.
\begin{prop} \label{coronewdu1709} Let $X$ be smooth projective and let $j: Y\hookrightarrow X$ be the inclusion of a smooth projective subvariety which is the zero-set of a transverse section $\sigma$ of a vector bundle $E$ on $X$. Then
\begin{eqnarray} \label{eqj*Y} j_*({\rm CH}(Y)_{\rm fl_*Ch})\subset {\rm CH}(X)_{\rm fl_*Ch}.
\end{eqnarray}
\end{prop}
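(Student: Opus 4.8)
The plan is to argue by induction on the codimension $c=\mathrm{rk}\,E$, using Proposition \ref{prohypersurafce} both as the base case and as the engine of the inductive step, and lowering the rank of $E$ by one at each stage by passing to a projective bundle. When $c=1$, $Y$ is a smooth divisor and $j$ is a finite (closed) morphism with $\dim Y=\dim X-1$ between smooth projective varieties, so the conclusion is exactly Proposition \ref{prohypersurafce}. For $c\geq 2$ I would consider the projectivization $\pi:P=\mathbb{P}(E^\vee)\to X$, normalized so that the tautological surjection reads $\pi^*E\twoheadrightarrow \mathcal{O}_P(1)$, with $\pi_*\mathcal{O}_P(1)=E$ and $\pi_*\xi^{c-1}=1$ for $\xi=c_1(\mathcal{O}_P(1))$. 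The section $\sigma$ pulls back to $\pi^*\sigma\in H^0(P,\pi^*E)$, whose image under the tautological surjection is a section $\bar\sigma\in H^0(P,\mathcal{O}_P(1))$ corresponding to $\sigma$ under $\pi_*\mathcal{O}_P(1)=E$. I then set $D:=Z(\bar\sigma)\subset P$.

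The geometric heart of the argument, and the step I expect to require the most care, is to check that $D$ is a smooth divisor in $P$ and that $\hat Y:=\pi^{-1}(Y)=\mathbb{P}(E^\vee|_Y)$ is a smooth complete bundle-section of $D$ of codimension $c-1$. Both facts follow from a local computation built on the transversality of $\sigma$. Choosing local coordinates $(\sigma_1,\dots,\sigma_c,t_1,\dots,t_{n-c})$ on $X$ with $Y=\{\sigma_1=\dots=\sigma_c=0\}$ and a frame $e_1,\dots,e_c$ of $E$ with $\sigma=\sum_i\sigma_ie_i$, and homogeneous fibre coordinates $[\lambda_1:\dots:\lambda_c]$, one has $\bar\sigma=\sum_i\lambda_i\sigma_i$; in the chart $\lambda_c=1$ the divisor $D=\{\sigma_c=-\sum_{i<c}\lambda_i\sigma_i\}$ is visibly a smooth graph, so $D$ is smooth of codimension $1$. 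Writing $S:=\ker(\pi^*E\to\mathcal{O}_P(1))$, a rank $c-1$ bundle, the restriction $\pi^*\sigma|_D$ lands in $S|_D$ and hence defines $\sigma_S\in H^0(D,S|_D)$. In the same chart $\sigma|_D=\sum_{i<c}\sigma_i(e_i-\lambda_ie_c)$, so the components of $\sigma_S$ in the induced frame of $S$ are exactly the coordinate functions $\sigma_1,\dots,\sigma_{c-1}$; thus $\sigma_S$ is transverse and $Z(\sigma_S)=\{\sigma_1=\dots=\sigma_{c-1}=0\}=\pi^{-1}(Y)=\hat Y$. Consequently $\hat Y\hookrightarrow D$ is a smooth complete bundle-section of codimension $c-1$, and $\rho:=\pi|_{\hat Y}:\hat Y=\mathbb{P}(E^\vee|_Y)\to Y$ is the projective bundle projection, in particular smooth, with $\pi\circ\delta\circ\hat\iota=j\circ\rho$ for the inclusions $\hat\iota:\hat Y\hookrightarrow D$ and $\delta:D\hookrightarrow P$.

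It then remains to assemble the three morphisms. By the inductive hypothesis applied to the codimension $c-1$ complete bundle-section $\hat\iota:\hat Y\hookrightarrow D$, one has $\hat\iota_*({\rm CH}(\hat Y)_{\rm fl_*Ch})\subset {\rm CH}(D)_{\rm fl_*Ch}$; since $\delta:D\hookrightarrow P$ is a finite morphism of codimension $1$ between smooth projective varieties (applied componentwise if $D$ is reducible), Proposition \ref{prohypersurafce} gives $\delta_*({\rm CH}(D)_{\rm fl_*Ch})\subset {\rm CH}(P)_{\rm fl_*Ch}$; and since $\pi$ is smooth, Remark \ref{remastable} gives $\pi_*({\rm CH}(P)_{\rm fl_*Ch})\subset {\rm CH}(X)_{\rm fl_*Ch}$. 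Finally, given $w\in {\rm CH}(Y)_{\rm fl_*Ch}$, I would lift it to $\hat w:=\rho^*w\cdot(\xi|_{\hat Y})^{c-1}$, which lies in ${\rm CH}(\hat Y)_{\rm fl_*Ch}$ because $\rho^*w\in {\rm CH}(\hat Y)_{\rm fl_*Ch}$ by Lemma \ref{lebasicpi*}(ii) ($\rho$ smooth), while $(\xi|_{\hat Y})^{c-1}\in {\rm CH}(\hat Y)_{\rm Ch}\subset {\rm CH}(\hat Y)_{\rm sm_*Ch}$ and ${\rm CH}(\hat Y)_{\rm fl_*Ch}$ is a module over ${\rm CH}(\hat Y)_{\rm sm_*Ch}$ by Lemma \ref{remaalgrat}(i),(iii). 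Using $\rho_*((\xi|_{\hat Y})^{c-1})=1$, the relation $\pi\circ\delta\circ\hat\iota=j\circ\rho$, and the projection formula, one computes $\pi_*\delta_*\hat\iota_*\hat w=j_*\rho_*\hat w=j_*\bigl(w\cdot\rho_*((\xi|_{\hat Y})^{c-1})\bigr)=j_*w$, so $j_*w\in {\rm CH}(X)_{\rm fl_*Ch}$, completing the induction. The only genuinely delicate point is the transversality verification of the second paragraph; everything else is formal bookkeeping with the stability properties already established.
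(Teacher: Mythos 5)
Your proposal is correct and follows essentially the same route as the paper: induction on the rank of $E$ via the projectivization $\mathbb{P}(E^\vee)$, the smooth hypersurface $D=Z(\bar\sigma)$ handled by Proposition \ref{prohypersurafce}, the induced transverse section of the rank $c-1$ kernel bundle cutting out $\pi^{-1}(Y)$ (this is exactly Lemma \ref{lepourpurevedu1709}), and Remark \ref{remastable} for $\pi_*$. The only (immaterial) difference is in the final bookkeeping: you cap with $(\xi|_{\hat Y})^{c-1}$ upstairs on $\hat Y$ before pushing forward, whereas the paper first establishes $\pi^*\circ j_*=j''_*\circ j'_*\circ\pi_Y^*$ and then caps with $h^{r-1}$ on $\mathbb{P}(E^*)$; the two computations agree by the projection formula.
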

\begin{proof} We prove the result by induction on the rank  of $E$, the case of rank $1$ being a particular case of Proposition \ref{prohypersurafce}. Let $E$ be a rank $r$ vector bundle on $X$ and let $\pi: \mathbb{P}(E^*)={\rm Proj}\,({\rm Sym}^*E)\rightarrow X$ be the projectivization of $E^*$. Let $\pi^*E\rightarrow \mathcal{H}$ be the quotient line bundle on $\mathbb{P}(E^*)$. The section $\pi^*\sigma\in H^0(\mathbb{P}(E^*),\pi^*E)$ projects to  a section $\sigma'$ of $\mathcal{H}$ and we have
\begin{lemm}\label{lepourpurevedu1709}  (i) The zero-locus of $\sigma'$ is a smooth hypersurface $X'$ of $\mathbb{P}(E^*)$.

(ii) Furthermore, the induced section $\sigma''$ of $F:={\rm Ker}\,(\pi^*E\rightarrow \mathcal{H})$ on $X'$ is transverse with zero-locus $\pi^{-1}(Y)=\mathbb{P}(E^*_{\mid Y})\subset X'\subset \mathbb{P}(E^*)$.\end{lemm}
\begin{proof} (i) The vanishing locus of $\sigma'$ is a $\mathbb{P}^{r-2}$-bundle over the open subset $X\setminus Y$ of $X$ where $\sigma\not=0$, hence it is smooth over $X\setminus Y$. It obviously contains $\pi^{-1}(Y)$ and it remains to show that it is smooth there, which is easy.

(ii) The vanishing locus of $\sigma''$ on $X'$ equals scheme-theoretically the vanishing locus of the section $\pi^*\sigma$ of $\pi^*E$ on $\mathbb{P}(E^*)$, hence equals $\pi^{-1}(Y)$. It is thus smooth of codimension $r-1$ in $X'$.
\end{proof}
Denoting by $\pi_Y:\mathbb{P}(E^*_{\mid Y})\rightarrow Y$ the restriction of $\pi$ over $Y$,  we know by  Lemma \ref{lebasicpi*} that $\pi^*_Y:{\rm CH}(Y) \rightarrow {\rm CH}(\mathbb{P}(E^*_{\mid Y}))$ maps
${\rm CH}(Y)_{\rm fl_*Ch}$ to ${\rm CH}(\mathbb{P}(E^*_{\mid Y}))_{\rm fl_*Ch}$. Denoting by $$j':\mathbb{P}(E^*_{\mid Y})\hookrightarrow X',\,\,j'': X'\hookrightarrow \mathbb{P}(E^*)$$ the inclusion maps,  we get, first  by the induction hypothesis on the rank and Lemma \ref{lepourpurevedu1709}, and secondly  by Proposition \ref{prohypersurafce}, that
$$j'_*({\rm CH}(\mathbb{P}(E^*_{\mid Y}))_{\rm fl_*Ch})\subset {\rm CH}(X')_{\rm fl_*Ch} ,\,\, j''_*({\rm CH}(X')_{\rm fl_*Ch})\subset {\rm CH}(\mathbb{P}(E^*))_{\rm fl_*Ch} .
$$
We conclude that the map $\gamma:=j''_*\circ j'_*\circ \pi_Y^*:{\rm CH}(Y)\rightarrow {\rm CH}(\mathbb{P}(E^*))$ has the property that
$$\gamma({\rm CH}(Y)_{\rm fl_*Ch})\subset {\rm CH}(\mathbb{P}(E^*))_{\rm fl_*Ch}.$$
Recalling from \cite[Proposition 1.7]{fulton} that $\gamma=\pi^*\circ j_*$, we thus proved that
$$\pi^*\circ j_*({\rm CH}(Y)_{\rm fl_*Ch})\subset {\rm CH}(\mathbb{P}(E^*))_{\rm fl_*Ch}.$$
Let $h=c_1(\mathcal{H})\in{\rm CH}^1(\mathbb{P}(E^*))$. By Lemma \ref{remaalgrat}, we have
$$h^{r-1}{\rm CH}(\mathbb{P}(E^*))_{\rm fl_*Ch}\subset {\rm CH}(\mathbb{P}(E^*))_{\rm fl_*Ch}$$
and by Remark \ref{remastable}, $\pi_* ({\rm CH}(\mathbb{P}(E^*))_{\rm fl_*Ch})\subset {\rm CH}(X)_{\rm fl_*Ch}$.
As $\pi_*(h^{r-1}\pi^*z)=z$ for any $z\in {\rm CH}(X)$, we conclude that for any $z\in {\rm CH}(Y)_{\rm fl_*Ch}$,
$$j_*z=\pi_*(h^{r-1}\pi^*(j_*z))\in {\rm CH}(X)_{\rm fl_*Ch}.$$
\end{proof}
Another consequence of Proposition \ref{prohypersurafce} is the following
\begin{prop} \label{problowup}  Let $X$ be smooth projective and let $ Y\subset X$ be a smooth projective subvariety which is the zero-set of a transverse section $\sigma$ of a vector bundle $E$ on $X$. Let $\tau: \widetilde{X}=B_YX\rightarrow X$ be the blow-up of $X$ along $Y$. Then
\begin{eqnarray} \label{eqtau*Y} \tau_*({\rm CH}(\widetilde{X})_{\rm fl_*Ch})\subset {\rm CH}(X)_{\rm fl_*Ch}.
\end{eqnarray}
\end{prop}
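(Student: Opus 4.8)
The plan is to factor the birational (hence non-flat) map $\tau$ as $\tau=\rho\circ k$, where $\rho\colon P\to X$ is a projective bundle and $k\colon \widetilde{X}\hookrightarrow P$ is a closed immersion realizing $\widetilde{X}$ as a \emph{smooth complete bundle-section} in $P$. Once this factorization is in place, the result follows in two lines from the tools already proved: Proposition~\ref{coronewdu1709} applied to $k$ shows $k_*({\rm CH}(\widetilde{X})_{\rm fl_*Ch})\subset {\rm CH}(P)_{\rm fl_*Ch}$, and Remark~\ref{remastable} applied to the smooth map $\rho$ shows $\rho_*({\rm CH}(P)_{\rm fl_*Ch})\subset {\rm CH}(X)_{\rm fl_*Ch}$, so that $\tau_*w=\rho_*(k_*w)\in {\rm CH}(X)_{\rm fl_*Ch}$ for every $w\in {\rm CH}(\widetilde{X})_{\rm fl_*Ch}$. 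Note that no dimension hypothesis intervenes, since neither of these two statements requires one.

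To construct $P$ and $k$, let $r={\rm rk}\,E={\rm codim}_XY$ and let $\rho\colon P=\mathbb{P}(E)\to X$ be the projective bundle of lines in the fibers of $E$, with its tautological exact sequence $0\to\mathcal{O}_P(-1)\to\rho^*E\to Q\to 0$. Then $P$ is smooth projective and $Q$ is a vector bundle of rank $r-1$, equal to ${\rm codim}_P\widetilde{X}$. The section $\sigma$ induces a section $\bar\sigma:=[\rho^*\sigma]\in H^0(P,Q)$, whose value at a point $(x,\ell)$ (with $\ell\subset E_x$ a line) is the image of $\sigma(x)$ in $E_x/\ell$, so that $\bar\sigma(x,\ell)=0$ if and only if $\sigma(x)\in\ell$. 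The geometric heart of the proof is the claim that $\bar\sigma$ is transverse and that its zero locus $Z(\bar\sigma)$, together with $\rho|_{Z(\bar\sigma)}$, is canonically the blow-up $\tau\colon\widetilde{X}=B_YX\to X$, the exceptional divisor corresponding to $\rho^{-1}(Y)\cap Z(\bar\sigma)=\mathbb{P}(E|_Y)$.

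This claim I would verify locally. Over $X\setminus Y$ the section $\sigma$ is nowhere zero and spans a line sub-bundle, so there $Z(\bar\sigma)$ is the graph of the resulting section of $\rho$ and maps isomorphically to $X\setminus Y$. Near a point of $Y$, transversality of $\sigma$ lets us pick coordinates $(u_1,\dots,u_r,t_1,\dots)$ on $X$ with $\sigma=(u_1,\dots,u_r)$ in a trivialization of $E$; on the chart $\xi_r=1$ of the fiber $\mathbb{P}^{r-1}$ the induced trivialization of $Q$ gives $\bar\sigma=(u_i-u_r\xi_i)_{i<r}$, so that $Z(\bar\sigma)=\{u_i=u_r\xi_i,\ i<r\}$. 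The differentials $d(u_i-u_r\xi_i)$ are linearly independent (each carries the independent term $du_i$), whence $\bar\sigma$ is transverse, $Z(\bar\sigma)$ is smooth of pure codimension $r-1$ with no component inside $\rho^{-1}(Y)$ (which has dimension $\dim X-1$, one less than $\dim Z(\bar\sigma)$), and $\{u_r=0\}$ is its exceptional divisor. To identify $Z(\bar\sigma)$ with $B_YX$ canonically and over $X$, I would invoke the universal property of the blow-up: $\tau^*\sigma$ vanishes to order one along the exceptional divisor $E_{\rm exc}$, producing a nowhere-vanishing section of $\tau^*E\otimes\mathcal{O}(-E_{\rm exc})$, i.e. a sub-line bundle $\mathcal{O}(E_{\rm exc})\hookrightarrow\tau^*E$ with locally free quotient, and hence a morphism $\widetilde{X}\to P$ over $X$ that is a closed immersion onto $Z(\bar\sigma)$ carrying $E_{\rm exc}$ to $\mathbb{P}(E|_Y)$.

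The main obstacle is precisely this geometric claim, namely realizing $B_YX$ as a transverse complete bundle-section of $\mathbb{P}(E)$ compatibly with the projections to $X$; the remaining steps are immediate invocations of Proposition~\ref{coronewdu1709} and Remark~\ref{remastable}. I expect this to be exactly the place where the hypothesis matters: it is the assumption that $Y$ is the zero-set of a transverse section of a bundle (rather than an arbitrary smooth center) that makes the blow-up a complete bundle-section in $\mathbb{P}(E)$, and thereby makes Proposition~\ref{coronewdu1709} applicable.
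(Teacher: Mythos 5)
Your proposal is correct and follows essentially the same route as the paper: both factor $\tau$ as the inclusion of $\widetilde{X}$ into $\mathbb{P}(E)$ as the zero locus of the section of the tautological quotient bundle $\pi^*E/\mathcal{S}$ induced by $\sigma$, then apply Proposition~\ref{coronewdu1709} to the inclusion and Remark~\ref{remastable} to the projection. Your explicit local verification of transversality and of the identification with $B_YX$ fills in exactly the ``local computation'' the paper leaves to the reader.
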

\begin{proof} Let $\pi: \mathbb{P}(E)={\rm Proj}({\rm Sym}^*E^*)\rightarrow X$ be the projectivization of $E$. Then the section $\sigma$ gives a rational section $X\dashrightarrow \mathbb{P}(E)$ of $\pi$, whose image is isomorphic to $\widetilde{X}$. Furthermore, as  a local computation shows, $\widetilde{X}\subset \mathbb{P}(E)$ is the zero-set of a transverse section $\overline{\sigma}$ of the quotient vector bundle $F:=\pi^* E/\mathcal{S}$ on $\mathbb{P}(E)$, namely, $\overline{\sigma}$ is the projection of $\pi^*\sigma$ in $\pi^* E/\mathcal{S}$, where $\mathcal{S}\subset \pi^*E$ is the tautological subbundle. We have
\begin{eqnarray} \label{eqtau*Ydanspre}\tau_*=\pi_*\circ j_*: {\rm CH}(\widetilde{X})\rightarrow {\rm CH}(X),
\end{eqnarray}
where $j:\widetilde{X}\rightarrow \mathbb{P}(E)$ is the inclusion map. By Proposition \ref{coronewdu1709}, we have
$$j_*({\rm CH}(\widetilde{X})_{\rm fl_*Ch})\subset {\rm CH}(\mathbb{P}(E))_{\rm fl_*Ch}$$
and  $\pi_*({\rm CH}(\mathbb{P}(E))_{\rm fl_*Ch})\subset {\rm CH}(X)_{\rm fl_*Ch}$ by Remark \ref{remastable}. Hence (\ref{eqtau*Ydanspre}) implies (\ref{eqtau*Y}).
\end{proof}

As a consequence of these propositions, we give  the easy  proof of  Theorem \ref{theo0cyclesflpsh} for cycles of dimension at most 3, and more generally of the following result.
 \begin{theo} \label{theo012} For any smooth projective variety $X$, we have
\begin{eqnarray}\label{eqinclusionweak} (d-2)!{\rm CH}_d(X)\subset {\rm CH}_d(X)_{\rm fl_*Ch},
\end{eqnarray}
with the convention that $(d-2)!=1$ if $d\leq 2$. In particular, for $d\leq 3$, we have ${\rm CH}_d(X)= {\rm CH}_d(X)_{\rm fl_*Ch}$.
\end{theo}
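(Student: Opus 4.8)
The plan is to reduce to the case where $Z$ is smooth and sits inside an ambient of arbitrarily large dimension, and then to realize a factorial multiple of $[Z]$ as a Chern class inside a smooth complete bundle-section containing $Z$, to which Theorem~\ref{theopourfilt}(ii) applies. First I would dispose of singularities: desingularizing $Z$ and embedding the resolution into $X\times\mathbb{P}^N$ produces a smooth subvariety $Z'\subset X\times\mathbb{P}^N$ with $\mathrm{pr}_{X*}[Z']=[Z]$. Since $\mathrm{pr}_X$ is smooth and proper, Remark~\ref{remastable} gives $\mathrm{pr}_{X*}({\rm CH}(X\times\mathbb{P}^N)_{\rm fl_*Ch})\subset {\rm CH}(X)_{\rm fl_*Ch}$, so it suffices to prove $(d-2)![Z']\in {\rm CH}(X\times\mathbb{P}^N)_{\rm fl_*Ch}$. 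Renaming, I may assume $Z\subset X$ is smooth of dimension $d$ with $\dim X$ as large as I wish.

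The engine is the following. If $Y\subset X$ is a smooth complete bundle-section containing $Z$ with $c:={\rm codim}_Y Z$, then formula~(\ref{eqformulap1fact}) applied on $Y$ gives $\pm (c-1)![Z]=c_c(\mathcal{O}_Z)\in {\rm CH}_d(Y)_{\rm Ch}\subset{\rm CH}_d(Y)_{\rm fl_*Ch}$ by Lemma~\ref{remaalgrat}(i), and Theorem~\ref{theopourfilt}(ii) pushes this to $\pm(c-1)![Z]\in {\rm CH}_d(X)_{\rm fl_*Ch}$. To build such a $Y$ I take $E=\bigoplus_i \mathcal{O}_X(D_i)$ with the $D_i$ very ample and a general section $\sigma$ of $E\otimes\mathcal{I}_Z$; its leading term along $Z$ is a bundle map $\overline{\sigma}\colon N_{Z/X}\to E|_Z$, and $Y=\{\sigma=0\}$ is smooth of the expected codimension along $Z$ precisely where $\overline{\sigma}$ is surjective. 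A degeneracy-locus count shows the non-surjectivity locus has expected codimension ${\rm codim}_Y Z+1$ in $Z$, hence is empty as soon as ${\rm codim}_Y Z\geq d$, while away from $Z$ smoothness and transversality follow from Bertini. Choosing $\dim Y=2d$, i.e. $c=d$, therefore always produces a smooth $Y$ and yields the bound $(d-1)!\,{\rm CH}_d(X)\subset {\rm CH}_d(X)_{\rm fl_*Ch}$. For $d\leq 2$ this is already the asserted equality since $(d-1)!=(d-2)!$ there (and the $0$-cycle case is handled directly, a point being a zero-cycle on a smooth complete-intersection curve through it).

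The sharpening from $(d-1)!$ to $(d-2)!$ forces $c=d-1$, i.e. $\dim Y=2d-1$, and here the count is critical: the degeneracy locus of $\overline{\sigma}$ has expected dimension $0$, so the best complete bundle-section $Y\supset Z$ one can produce is smooth away from a finite set $\Sigma\subset Z$, with, for general $\sigma$, an ordinary double point at each point of $\Sigma$. I would resolve these by blowing up $X$ at the points of $\Sigma$ one at a time, each being a smooth complete bundle-section, so that Theorem~\ref{theopourfilt}(iii) lets me descend classes; the proper transform of $Y$ is then cut out by the twisted sections $\tau^*\sigma_i(-G)$ and $\tau^*\sigma_{c_0}(-2G)$, hence is again a complete bundle-section, and a single blow-up smooths each ordinary double point. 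The proper transform $\widetilde{Z}\to Z$ is birational, $\widetilde{Z}$ is smooth of codimension $d-1$ in the resolved $\widetilde{Y}$, and applying the engine on $\widetilde{Y}$ gives $(d-2)![\widetilde{Z}]=\pm c_{d-1}(\mathcal{O}_{\widetilde{Z}})$; pushing down via Theorem~\ref{theopourfilt}(iii) and using $\Pi_*[\widetilde{Z}]=[Z]$ yields $(d-2)![Z]\in {\rm CH}_d(X)_{\rm fl_*Ch}$. Note this is exactly the limit of the method: for $c=d-2$ the singular locus of $Y$ would be positive-dimensional and no longer resolvable by finitely many point blow-ups.

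The main obstacle is precisely this singularity analysis. One must check that for general $\sigma$ the singular set $\Sigma$ is finite, that each singularity is a nondegenerate (ordinary) one resolved by a single blow-up of the ambient point, and that the proper transform of a complete bundle-section under such a blow-up is again a complete bundle-section of the expected codimension, so that Theorem~\ref{theopourfilt}(iii) genuinely applies and $\widetilde{Y}$ is smooth. Once these local facts are in place, everything else—the Grothendieck--Riemann--Roch identity, the Bertini transversality for the construction of $Y$, and the pushforward stability—is routine given Theorem~\ref{theopourfilt}, and the count of the degeneracy locus is exactly what pins the coefficient at $(d-2)!$.
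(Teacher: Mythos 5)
Your overall strategy coincides with the paper's: reduce to a smooth $Z$ in a large ambient variety, trap $Z$ inside a bundle-section of dimension $2d-1$ having only finitely many ordinary double points, resolve these by blowing up points, and apply formula (\ref{eqformulap1fact}) in codimension $d-1$ together with Theorem \ref{theopourfilt} to descend. There is, however, a genuine gap at the resolution step. You blow up $X$ ``at the points of $\Sigma$ one at a time, each being a smooth complete bundle-section'' — but a single point of a smooth projective variety is \emph{not} in general a complete bundle-section. If $\{x\}$ were the transverse zero locus of a section of a vector bundle $E$ of rank $\dim X$, then $[x]=c_{\dim X}(E)$ would lie in ${\rm CH}_0(X)_{\rm Ch}$, and the paper's own examples (Lemma \ref{leexample}, the abelian varieties of \cite{debarre}, the flag varieties of Section \ref{secexamples}) show that this fails; indeed this failure is the reason the whole machinery of flat pushforwards is needed. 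So Theorem \ref{theopourfilt}(iii) does not apply to your blow-down $\widetilde X\to X$ as written. The repair is exactly the move the paper makes: enlarge $\Sigma$ to a $0$-dimensional complete intersection $W'=\Sigma\cup\Sigma''$ of very ample divisors, chosen generally so that $\Sigma''$ is disjoint from $Y$; then $W'$ \emph{is} a complete bundle-section, and the superfluous centers do not affect the proper transform of $Y$.

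A secondary soft spot: to invoke Theorem \ref{theopourfilt}(ii) on $\widetilde Y\subset\widetilde X$ you need $\widetilde Y$ to be a smooth complete bundle-section, and your recipe ``twist $\tau^*\sigma_i$ by $-G$ and $\tau^*\sigma_{c_0}$ by $-2G$'' is not literally correct: the corank-one direction of $d\sigma$ at a point of $\Sigma$ need not align with the direct-sum decomposition of $E$, so one must replace $\pi^*\mathcal F(-E)$ by the kernel of its surjection onto a quotient line bundle, as in Lemma \ref{subtle.bu.lem}. The paper avoids both difficulties more economically: it first cuts $Z$ down to a \emph{smooth} complete intersection $Y$ of dimension $2d$ (to which Theorem \ref{theopourfilt}(ii) applies), takes a general ample hypersurface $Y'\subset Y$ through $Z$ of dimension $2d-1$ with isolated ordinary double points (Lemma \ref{letroptropfacile}), blows up $Y$ along the $0$-dimensional complete intersection $W'$, and then uses Theorem \ref{theopourfilt}(i) for the inclusion $\widetilde{Y'}\subset\widetilde Y$ — a statement valid for an arbitrary finite morphism in relative dimension one, with no bundle-section hypothesis on $\widetilde{Y'}$. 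Your degeneracy-locus count correctly identifies $c=d-1$ as the critical codimension and pins the coefficient at $(d-2)!$, so the quantitative heart of the argument is right; it is the descent machinery that needs to be repaired along the lines above.
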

\begin{proof}  Let $X$ be smooth projective of dimension $n$ and let
$Z\subset X$ be a subvariety of dimension $d$. We choose a desingularization
$\widetilde{Z}\rightarrow Z$ of $Z$ and an embedding $\widetilde{Z}\subset X\times \mathbb{P}^m$ over $X$ for some $m$. As the projection $p_X: X\times \mathbb{P}^m \rightarrow X$ to $X$ is flat, it suffices to prove that
\begin{eqnarray} \label{eqd-1factoriel} (d-2)! \widetilde{Z}\in {\rm CH}( X\times \mathbb{P}^m)_{\rm fl_*Ch},
\end{eqnarray}
as it implies by Remark \ref{remastable} that $ (d-2)! {Z}\in {\rm CH}( X)_{\rm fl_*Ch}$, which is the contents of (\ref{eqinclusionweak}). In other words, letting $$Z'=\widetilde{Z},\,X'=X\times \mathbb{P}^m,$$
we reduced to the case of the class of a smooth subvariety $Z'\subset X'$,   which we treat now.  If ${\rm dim}\,X'\leq 2d-1$, this is finished by formula (\ref{eqformulap1fact}) since then ${\rm codim}\,(Z'\subset X')\leq d-1$. If not,  let  $Y$ be a smooth general complete intersection of sufficiently ample hypersurfaces  in $X'$  containing  $Z'$, with $Y$ of dimension $2d$. Such $Y$ exists by   Lemma \ref{letroptropfacile} below,  since $Z'$ is smooth of dimension $d$ and ${\rm dim}\,X'\geq 2d$. Let $j: Y\hookrightarrow  X'$ be the inclusion of  $Y$.  As $Y$ is a smooth complete bundle-section in $X'$, we have by Proposition \ref{coronewdu1709}
$$j_*({\rm CH}(Y)_{\rm fl_*Ch})\subset  {\rm CH}(X')_{\rm fl_*Ch}.$$

In order to prove Theorem \ref{theo012},  it thus suffices  to prove that the class $z'$ of $Z'$ in $Y$ satisfies \begin{eqnarray}\label{eqpourYzerofinalameliored-2}(d-2)!z'\in {\rm CH}_d(Y)_{\rm fl_*Ch}.
\end{eqnarray}
Let $Y'\subset Y$ be  a general sufficiently ample hypersurface containing $Z'$.
\begin{lemm}\label{letroptropfacile} Let $N$ be a smooth variety  and  $ M\subset N$ be a smooth subvariety of dimension $d$ and codimension $c$.  Then a general sufficiently ample hypersurface $H\subset N$ containing $M$ has  ordinary quadratic singularities along  a smooth subvariety $D\subset M$ of dimension $d-c$. In particular $H$ is smooth if $d<c$ and,  if $d=c$, $H$ has isolated ordinary quadratic singularities.
\end{lemm}
\begin{proof}  (See also \cite[Theorem 2.1 and Corollary 2.5]{diazharbater}.) By Bertini, the singularities of $H$ are on $M$, and they correspond to the zeroes of the differential
\begin{eqnarray}
\label{eqsectionetsadiff} d\sigma\in H^0(M,N_{M/N}^*(H))
\end{eqnarray}
along $M$ of the defining equation $\sigma$ of $H$. When $H$ is sufficiently ample, the section (\ref{eqsectionetsadiff}) is a  general section of the bundle $N_{M/N}^*(H)$ and this bundle  is globally  generated, so the zero-locus of $d\sigma$  is transverse, hence the singular locus of $Y'$ is smooth of dimension $d-c$ (and empty if $d-c<0$). In fact, the transversality of the  section  $d\sigma$ also implies that the singularities are ordinary quadratic as a local computation shows.
\end{proof}
By Lemma \ref{letroptropfacile}, the hypersurface $Y'$ above  is desingularized by a single blow-up along the finite  set $W\subset Y'$ of  its  singular points. We choose now a general $0$-dimensional complete intersection $W'\subset Y$ containing $W$. We have $W'=W\cup W''$, where the set $W''$ is disjoint from $Y'$. It follows that the blow-up $\widetilde{Y}$ of $Y$ along $W'$ contains the blow-up $\widetilde{Y'}$ of $Y'$ along $W$ as a smooth hypersurface.
Let $\widetilde{Z'}\subset \widetilde{Y'}$ be the proper transform of $Z'$ and denote by $\tilde{z}'$ its class in ${\rm CH}_d(\widetilde{Y'})$. As the codimension of $\widetilde{Z'}$ in $\widetilde{Y'}$ is $d-1$, we have $(d-2)!\tilde{z}'\in{\rm CH}_d(\widetilde{Y'})_{\rm Ch}$ by (\ref{eqformulap1fact}). We now apply Proposition \ref{prohypersurafce} to the inclusion $i$ of
$\widetilde{Y'}$ in $ \widetilde{Y}$ and conclude that $$(d-2)!i_*\tilde{z}'\in {\rm CH}_d(\widetilde{Y})_{\rm fl_*Ch}.$$  As the morphism $\tau: \widetilde{Y}\rightarrow Y$ blows-up the smooth complete bundle-section $W'$ in $Y$, we finally get
$$(d-2)!z'=\tau_*((d-2)!i_*\tilde{z}')\in {\rm CH}_d(Y)_{\rm fl_*Ch}$$
by Proposition \ref{problowup}.
\end{proof}

 \section{The cbs resolution  Theorem\label{secmainproofofmain}}

The main result of this section  is Theorem~\ref{cibu.14.thm}, which says that
a  smooth  subvariety of a  smooth  variety becomes an irreducible  component of a smooth
complete bundle-section after a suitable sequence of blow-ups,  whose centers are also
smooth complete bundle-sections.

In this section we work over an infinite perfect field. All
 varieties are allowed to be reducible, but assumed pure dimensional.

  {\bf Blow-up sequences.}
  A  {\it blow-up sequence}  is a sequence of morphisms
  \begin{eqnarray}\label{bus.say1}
  Y_r\stackrel{\pi_{r-1}}{\longrightarrow} Y_{r-1}
  \stackrel{\pi_{r-2}}{\longrightarrow}\cdots
  \stackrel{\pi_{0}}{\longrightarrow} Y_0,
  \end{eqnarray}
  where each $\pi_i:Y_{i+1}\to Y_i$ is the blow up of a subscheme
  $C_i\subset Y_i$, called the {\it center } of the blow-up.

  Let $W_0\subset Y_0$ be a subscheme. If the images of the centers $C_i$ are nowhere dense in $W_0$, then we let $W_i\subset Y_i$ denote the {\it  birational  transform} of $W_0$  (also called {\it proper transform} of $W_0$).

  Here we only deal with blow-up sequences where $Y_0$ is smooth, and the $C_i$ are smooth and pure dimensional. In this case all the $Y_i$ are smooth.

  We say that a blow-up sequence as in (\ref{bus.say1}) is a {\it complete bundle-section blow-up sequence} (abbreviated as {\it cbs blow-up sequence}),
    if the $C_i\subset Y_i$ are all complete bundle-sections.

  We consider the following.

\begin{question}\label{cibu.ques}
  Let $Z\subset Y$ be smooth projective  varieties.
  Is there a  cbs blow-up sequence  $Y_r\to\cdots\to Y_0:=Y$
  with centers $C_i\subset Y_i$ such that $\dim C_i<\dim Z$ for every $i$, and
  $Z_r\subset Y_r$ is an irreducible component (that is, a connected component) of a smooth, complete bundle-section?
\end{question}

Most likely the answer is yes, but we prove this only when
$\dim Z<\frac14 \dim Y$; this is sufficient for our purposes, as explained in the introduction.

\begin{theo}\label{cibu.14.thm}
  Let $Z\subset Y$ be smooth projective  varieties such that $\dim Z<\frac14 \dim Y$.
  Then there is a  cbs blow-up sequence  $Y_r\to\cdots\to Y_0:=Y$
  with centers $C_i\subset Y_i$, such that
    $\dim C_i<\dim Z$ for every $i$, and  $Z_r\subset Y_r$ is a connected component of a smooth complete bundle-section  $Z_r^*\subset Y_r$.
\end{theo}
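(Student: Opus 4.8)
The plan is to leave $Z$ untouched and instead realize it, after blow-ups performed only away from a neighbourhood of $Z$, as a transverse connected component of the zero scheme of a section of a rank-$c$ bundle, where $c=\dim Y-\dim Z$. Since such blow-ups do not affect $Z$ or its normal bundle $N:=N_{Z/Y}$, throughout the construction $Z_r\cong Z$ and $N_{Z_r/Y_r}\cong N$. Recall that the transverse zero scheme of a section of a rank-$c$ bundle is, by definition, a smooth complete bundle-section, and that a smooth subvariety of the same dimension contained in a smooth variety is automatically open and closed in it, hence a union of connected components. Thus it suffices to produce, on some $Y_r$ obtained by a cbs blow-up sequence with centres of dimension $<\dim Z$, a rank-$c$ bundle $E$ and a section $s$ such that $s$ vanishes along $Z$ with $ds\colon N\to E|_Z$ an isomorphism and such that $\{s=0\}$ is smooth; then $Z_r^*:=\{s=0\}$ and $Z_r=Z$ answer the theorem.

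Granting the existence of $E$ with $E|_Z\cong N$, the section is produced by a positivity-plus-Bertini argument. The apparent obstruction, that $E|_Z$ is pinned to $N$ so that $E$ cannot simply be made ample, is resolved by the blow-ups themselves: twisting $E$ by $\mathcal{O}(\sum a_iE_i)$, where the $E_i$ are the exceptional divisors and hence disjoint from $Z$, leaves $E|_Z\cong N$ unchanged while making $E$ as positive as desired away from $Z$. For suitable $a_i$ the restriction map from $H^0(Y_r,E)$ to the space of $1$-jets along $Z$ is surjective, so one may prescribe $s|_Z=0$ and $ds$ equal to a fixed isomorphism $N\xrightarrow{\sim}E|_Z$; a general such $s$ then has $\{s=0\}$ smooth away from $Z$ by Bertini, transversality along $Z$ being built into the prescribed $1$-jet. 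As $\{s=0\}$ is smooth of dimension $\dim Z$ and contains the smooth subvariety $Z$, the latter is open and closed in it, i.e. a connected component.

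The heart of the matter is therefore the extension of $N$ to a vector bundle on some $Y_r$. I would proceed through the normal sequence $0\to T_Z\to T_Y|_Z\to N\to 0$, which exhibits $N$ as the quotient of the globally defined bundle $T_Y|_Z$ by the rank-$d$ subbundle $T_Z$, where $d=\dim Z$. A rank-$d$ subbundle of $T_Y$ is the same datum as a section of the Grassmann bundle $\mathrm{Gr}(d,T_Y)\to Y$, and over $Z$ such a section is already given by $z\mapsto T_{Z,z}\subset T_{Y,z}$. Using that $Z$ is smooth and that $\dim Z$ is small, one extends this to a rational section of $\mathrm{Gr}(d,T_Y)$ that is regular in a neighbourhood of $Z$ and restricts to $T_Z$ there; its indeterminacy locus is then disjoint from $Z$, so all blow-up centres lie away from $Z$. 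After resolving this indeterminacy, the resulting section of $\mathrm{Gr}(d,\Pi^*T_Y)\to Y_r$ (with $\Pi\colon Y_r\to Y$ the composite) determines an honest rank-$d$ subbundle $T\subset\Pi^*T_Y$ with $T|_Z=T_Z$, and we set $E:=\Pi^*T_Y/T$, a locally free sheaf of rank $c$ with $E|_Z\cong N$.

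The principal obstacle is that the theorem requires every blow-up centre to be a \emph{smooth complete bundle-section} of dimension $<\dim Z$, not merely a smooth subvariety; one cannot simply blow up the raw indeterminacy locus of the Grassmann-bundle section. Instead one must, at each stage, keep that locus inside a smooth complete bundle-section of dimension $<d$ and verify that blowing such a centre up both decreases the indeterminacy and preserves the bundle-section nature of the remaining centres. Controlling the dimensions of the successive indeterminacy loci and of the auxiliary complete bundle-sections that must contain them, together with the general-position conditions ensuring that the extended section is transverse and that the final $\{s=0\}$ is smooth with $Z$ a single component, is exactly what forces the hypothesis $4\dim Z<\dim Y$: the relevant incidence loci, in the spirit of Lemma~\ref{lemtranswhitney} but now recording the first-order data of the section of $\mathrm{Gr}(d,T_Y)$, must have codimension exceeding a fixed multiple of $\dim Z$, and tracking these bounds through the construction yields the factor $\tfrac14$. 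I expect this simultaneous control of the indeterminacy loci and of the complete bundle-section condition on the centres to be the decisive difficulty, the extension of $N$ and the concluding Bertini step being comparatively formal once it is in hand.
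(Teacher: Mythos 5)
Your strategy is genuinely different from the paper's and, as written, has two concrete gaps, one of which is an outright error. The paper never attempts to extend $N_{Z/Y}$ to a bundle on a modification of $Y$; instead it runs a double induction on $d=\dim Z$ (Properties ${\cbs}_d$ and ${\cbs}'_d$ of \ref{cbs.1.prop}--\ref{cbs.2.prop}), cutting $Z$ out one hypersurface at a time inside successively smaller complete bundle-sections. Each general hypersurface through $Z$ acquires ordinary double points along a smooth $D\subsetneq Z$ of dimension $\dim Z - c$ (Lemma \ref{letroptropfacile}); the induction hypothesis in dimension $d-1$ makes $D$ a component of a smooth cbs, one blows that up, and Lemma \ref{subtle.bu.lem} (the kernel-bundle construction $\mathcal{F}'=\mathrm{Ker}\,(\pi^*\mathcal{F}(-E)\to\pi_E^*\mathcal{L}(-E))$) shows the birational transform is again a complete bundle-section. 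In other words, the bundle whose section cuts out $Z_r^*$ is built incrementally from line bundles and blow-up corrections, precisely because no direct extension of the normal bundle is available.

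The first gap in your argument is the assertion that the section $z\mapsto T_{Z,z}$ of $\mathrm{Gr}(d,T_Y)$ over $Z$ extends to a rational section regular in a neighbourhood of $Z$ and restricting to $T_Z$ there. This is not a formal extension statement: it amounts to extending the subbundle $T_Z\subset T_Y|_Z$ (equivalently, the quotient $N_{Z/Y}$) across a neighbourhood of $Z$, which is exactly the obstruction the whole theorem is designed to get around, and you give no mechanism for it. (Rational sections of $\mathrm{Gr}(d,T_Y)$ certainly exist, but controlling both the indeterminacy locus away from $Z$ and the restriction to $Z$ simultaneously is the hard part; note that $d$ general sections of a globally generated rank-$d$ bundle on a $d$-fold already degenerate in codimension $1$, so the naive span-of-sections construction fails to reproduce $T_Z$.) The second gap is the positivity step: $\mathcal{O}(\sum a_iE_i)$ is trivial on the complement of the exceptional locus (and $\mathcal{O}(E_i)|_{E_i}$ is negative), so twisting by exceptional divisors of blow-ups disjoint from $Z$ cannot make $E$ ``as positive as desired away from $Z$'', cannot make the restriction to $1$-jets along $Z$ surjective, and gives you no Bertini statement off the exceptional locus. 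The only way to gain positivity globally is to twist by $\Pi^*A$ with $A$ ample on $Y$, which destroys the identification $E|_Z\cong N$. Finally, you explicitly defer the requirement that every centre be a smooth complete bundle-section of dimension $<\dim Z$, which is where the paper spends most of its effort (the auxiliary $\bar X$ construction and the full-intersection bookkeeping in the proof of ${\cbs}_{d-1}\Rightarrow{\cbs}'_d$); as it stands the proposal is a programme with its decisive steps either missing or incorrect.
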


We will prove this theorem as an almost immediate consequence of  Property~${\cbs}_d$ stated in \ref{cbs.1.prop}.
In the inductive proof of Theorem~\ref{cibu.14.thm}  we need a stronger version, where the centers $C_i$ are in `general position'
with respect to some other subvarieties.
To understand what we need,
 consider the  blow-up of
$H:=(xy+z^2)\subset \a^4$ along the line $L:=(x=z=t=0)$. In one chart we get the equation $H'=(x_1y+z_1^2t_1=0)$. Thus $H'$ does not have ordinary double points.   Here $L\subset H$, and it is transversal to the singular set of $H$, which is $(x=y=z=0)$.

This leads to the following definition.

\begin{Defi} {\rm (Full intersection property) \label{fip.say}
  Let
  $Z\subset Y$ be  schemes.
  A closed subset $U\subset Y$ has {\it full intersection} with $Z$, if
  $Z\cap U$ is a union of connected components of $U$.
  A blow-up sequence  $Y_r\to \cdots \to Y_0=Y$ has
  {\it full intersection} with $Z$ if
the birational transforms  $Z_i\subset Y_i$ are defined, and
  each blow-up center  $C_i$ has
  full intersection with  $Z_i$.}
\end{Defi}

Let $Y$ be a smooth variety and $Z,C$ smooth  subvarieties. We will say that $Z$ has normal crossings with $C$, if  the intersection $Z\cap C$ is smooth.

\begin{lemm} (Elementary blow-up lemmas) \label{bu.lems}
   Let $\pi:Y':=B_CY\to Y$ be the blow-up and $Z'\subset Y'$ the birational transform.

\medskip

  (i) If $Z$ has normal crossings with $C$, then $Z'$ is smooth.

(ii) If $Z$ is a complete bundle-section in $Y$ and $C$ has full intersection with $Z$, then $Z'$ is a complete bundle-section in $Y'$.

  \medskip

  In addition, let  $H\subset Y$ be a hypersurface that
  has only  ordinary double points along some smooth
  $D\subsetneq H$.

\medskip

   (iii) If $C=D$, then  $H'$ is smooth.

  (iv) If $C$ has full intersection with $D$ and normal crossings with $H\setminus D$,  then $H'$ has only ordinary double points   along the proper transform $D'$ of $D$.
\end{lemm}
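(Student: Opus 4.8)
The plan is to prove all four parts by reducing to explicit local models in the blow-up charts, the global assertions following because the hypotheses (normal crossings, full intersection) are precisely what make the local pictures patch. Throughout I write $\pi:Y'=B_CY\to Y$ and choose étale-local coordinates $x_1,\dots,x_n$ in which $C=\{x_1=\dots=x_c=0\}$, with the charts $x_i=x_1 y_i$.

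For (i), I would first invoke the base-change formula for blow-ups, $B_CY\times_Y Z=\mathrm{Bl}_{I_C\mathcal{O}_Z}(Z)$. The ideal $I_C\mathcal{O}_Z$ is exactly the ideal of the scheme-theoretic intersection $Z\cap C$, which by the normal-crossings hypothesis defines a smooth subvariety of $Z$. Hence the total transform $B_CY\times_Y Z=\mathrm{Bl}_{Z\cap C}Z$ is the blow-up of a smooth variety along a smooth center, so it is smooth; and $Z'$, being the union of those connected components that dominate $Z$, is smooth as well. For (ii) I would argue locally, using full intersection to reduce to the two clean cases: on a component $C_\beta$ of $C$ disjoint from $Z$ the blow-up leaves $Z$ untouched, so only a component $C_\alpha\subset Z$ needs attention. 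Writing $Z=Z(\sigma)$ for a transverse section $\sigma$ of a bundle $V$ with $\operatorname{rk}V=\operatorname{codim}Z$, the inclusion $C_\alpha\subset Z$ forces $\sigma_i=\sum_{j\le c}x_j a_{ij}$, and transversality of $\sigma$ forces the matrix $(a_{ij})$ to have maximal rank $\operatorname{rk}V$ along $C$. In the chart $x_j=x_1y_j$ this gives $\pi^*\sigma=x_1\cdot\tilde\sigma$, and globally $\pi^*\sigma=s_{E_Z}\cdot\tau$, where $E_Z$ is the (well-defined, thanks to full intersection) sum of the exceptional divisors lying over the components of $C$ contained in $Z$, and $\tau\in H^0(Y',\pi^*V(-E_Z))$. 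The maximal-rank condition then shows $\tau$ is transverse with $Z(\tau)=Z'$, so $Z'$ is a complete bundle-section for the bundle $\pi^*V(-E_Z)$, whose rank equals $\operatorname{codim}Z'$.

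For (iii) and (iv) I would put $H$ into the normal form of an ordinary double point along the smooth locus $D=\{x_1=\dots=x_k=0\}$: the equation reads $f=Q(x_1,\dots,x_k;x')+(\text{order}\ge 3\text{ in }x_1,\dots,x_k)$, where the quadratic form $Q$ in the normal variables has coefficient matrix that is nondegenerate for all values of the tangential coordinates $x'$. For (iii), with $C=D$, blowing up and pivoting on a normal coordinate gives $\pi^*f=x_1^2\cdot g$ with $g|_{E}=Q(1,w_2,\dots,w_k;x')$ a nondegenerate quadric; thus $H'=Z(g)$ meets $E$ in a smooth quadric bundle over $D$ and is smooth everywhere. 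For (iv) I would split $C$ by full intersection: a component disjoint from $D$ meets only the smooth part $H\setminus D$, with which $C$ has normal crossings, so $H'$ stays smooth there by (i); for a component $C_\alpha\subseteq D$ with $C_\alpha\subsetneq D$ I would pivot on a tangential coordinate $x_{k+1}$, obtaining $\pi^*f=x_{k+1}^2\cdot g$ with $g=Q(w_1,\dots,w_k;x')+x_{k+1}(\cdots)$ still nondegenerate quadratic in $w_1,\dots,w_k$, whose gradient vanishes exactly along $D'=\{w_1=\dots=w_k=0\}$ — that is, $H'$ has ordinary double points precisely along $D'$ (the case $C_\alpha=D$ reduces to (iii), with $D'=\emptyset$).

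The substantive steps are (ii) and (iv). In (ii) the delicate point is that a \emph{single} division by the exceptional divisor yields a genuinely transverse section; this is exactly where the maximal-rank consequence of transversality, combined with full intersection (which makes $E_Z$ a sum of \emph{entire} exceptional divisors rather than pieces of them), is indispensable. In (iv) the main obstacle is the phenomenon illustrated by the example $H=(xy+z^2)$: a center that meets $D$ without full intersection pivots the quadratic form partly into the exceptional direction and degrades the double-point structure to something worse. The full-intersection and normal-crossings hypotheses are designed precisely to exclude this, and the heart of the argument is the verification that pivoting on a tangential coordinate preserves the nondegeneracy of $Q$ modulo the exceptional equation, so that no new singularities appear off $D'$ and the singularities along $D'$ remain ordinary.
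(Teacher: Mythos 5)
Your treatment of (iv) --- the only part the paper actually proves, the other three being dismissed there as ``completely standard'' --- follows essentially the same route as the paper: use full intersection to reduce to a center contained in $D$, put $H$ in quadratic normal form along $D$, and compute chart by chart in the blow-up (the paper takes the exact normal form $f=\sum z_i^2$ rather than carrying the higher-order remainder, but this is cosmetic). Two minor corrections. First, in (i) the identity $B_CY\times_YZ=\mathrm{Bl}_{I_C\mathcal{O}_Z}(Z)$ is false (take $Z=C$: the left side is the exceptional divisor, the right side is empty); the standard fact you want is that the \emph{strict} transform of $Z$ equals $\mathrm{Bl}_{Z\cap C}Z$, which still yields smoothness of $Z'$ when $Z\cap C$ is smooth. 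Second, in (iv) you only analyze the charts obtained by pivoting on a coordinate tangential to $D$; you must also check the charts pivoting on a coordinate normal to $D$, where the strict transform has local equation $1+\sum_{i} w_i^2+(\text{higher order})=0$ and is smooth --- this is exactly the second case ($d+1\le k\le n$) in the paper's computation, and it is needed to conclude that no singularities of $H'$ appear outside $D'$. Neither point affects the substance of the argument.
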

\begin{proof} We only prove (iv), as the other statements are completely standard. Using (i), we only have to check what happens over the components of $C$ contained in $D$. Let $r:={\rm dim}\,C$, $d:={\rm dim}\,D$, so $r\leq d$, and $n:={\rm dim}\,Y={\rm dim}\,H+1$. By assumption, we can construct  local analytic coordinates $z_1,\ldots,\,z_n$ on $Y$ such that $D$ is defined by $z_i=0,\,i\geq d+1$, $C$ is defined by $z_i=0,\,i\geq r+1$, and $H$ is defined by
\begin{eqnarray}\label{eqpourlebuexpl} f(z):=\sum_{i=d+1}^{n}z_i^2=0.\end{eqnarray}
The blow-up $\widetilde{Y}$ of $Y$ along $C$ is defined inside
$$\mathbb{P}^{n-r-1}\times Y$$ by the equations
$$Y_iz_j=Y_jz_i$$
for $i,\,j\geq r+1$, the $Y_i$'s being homogeneous coordinates on $\mathbb{P}^{n-r-1}$.
On the open set where $Y_k\not=0$, we have local coordinates
$$ z_l,\,\,\,\,1\leq l\leq r,\,\,\,\,\,\,\,\,y_j,\, \,\,j\geq r+1,\,j\not= k,\,\,\,\,\,\,\,{\rm and}\,\,z_k$$
on $\widetilde{Y}$ and the blow-up map is given by
$$ z_j=z_k y_j,\,\,\,{\rm for}\,\,j\geq r+1,\,j\not= k,$$
the exceptional divisor being defined by $z_k=0$.
We examine separately  the two   cases $k\leq d$  and  $ d+1\leq k\leq n$.

If $k\leq d$, the local equations for $\widetilde{H}$ is
\begin{eqnarray}\label{eqpourlebuexplcase1} \sum_{i=d+1}^{n}y_i^2=0,\end{eqnarray}
and $\widetilde{D}$ is defined by $y_i=0$ for any $i\geq d+1$.
It follows that $\widetilde{H}$ has  ordinary quadratic singularities along $\widetilde{D}$.

If $d+1\leq k\leq n$, the local equation for $\widetilde{H}$ is
\begin{eqnarray}\label{eqpourlebuexplcase1} 1+\sum_{i=d+1,i\not=k}^{n}y_i^2=0,\end{eqnarray}
and $\widetilde{D}$ is defined by $y_i=0$ for any $i\geq d+1$.
It follows that $\widetilde{H}$ is smooth in this open set.
\end{proof}
We also need the following subtler variant.  This is the main point in the proof where going from  complete intersections to complete bundle-sections becomes necessary.

\begin{lemm} \label{subtle.bu.lem} Let $Y$ be smooth projective and let $M\subset Y$ be a complete bundle-section. Assume that $M$ has only  ordinary double points along some smooth subvariety
   $D\subsetneq M$. Let $\pi:Y':=B_DY\to Y$ be the blow-up of $Y$ along $D$ and $M'$ the  birational transform of $M$ in $Y'$. Then  $M'\subset Y'$ is a smooth complete bundle-section.
   \end{lemm}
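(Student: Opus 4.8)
The plan is to prove Lemma~\ref{subtle.bu.lem} by a local analytic computation, just as in Lemma~\ref{bu.lems}(iv), but with a crucial twist coming from the bundle-section structure. The key new phenomenon, already flagged by the example $H = (xy+z^2) \subset \a^4$ blown up along $L = (x=z=t=0)$, is that blowing up the singular locus $D$ of an ordinary-double-point hypersurface does \emph{not} in general produce a complete \emph{intersection}; the proper transform acquires the structure of a complete bundle-section instead. So the heart of the argument is to track, in suitable local coordinates, both the smoothness of $M'$ and the fact that its defining equations assemble into a section of a vector bundle of the correct rank.

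Concretely, I would set $d := \dim D$, $n := \dim Y$, and write $c := \operatorname{codim}(M \subset Y)$. Near a point of $D$, since $M$ has ordinary double points along the smooth $D$, I can choose local analytic coordinates $z_1,\ldots,z_n$ so that $D = (z_{d+1}=\cdots=z_n=0)$ and $M$ is cut out by the $c$ equations realizing it as a bundle-section, with the ordinary-double-point condition meaning that modulo higher order the defining section looks like a nondegenerate quadratic form in the normal coordinates $z_{d+1},\ldots,z_n$. The blow-up $Y' = B_D Y$ is covered by charts; on the chart where the $k$-th homogeneous coordinate is nonzero ($d+1 \le k \le n$) the map is $z_j = z_k y_j$ for $j \ge d+1$, $j \ne k$, the exceptional divisor being $(z_k=0)$. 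The plan is to substitute these into the local defining equations of $M$, factor out the appropriate power of $z_k$ coming from the blow-up of each defining section, and check that the resulting equations for the proper transform $M'$ are both transverse (hence $M'$ smooth) and globally organized as a section of a rank-$c$ bundle on $Y'$ — namely a suitable twist of $\pi^*E$ by the exceptional divisor.

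The step I expect to be the main obstacle is precisely the passage from \emph{complete intersection} to \emph{complete bundle-section}: after blowing up, the $c$ local equations of $M$ no longer factor out a common power of $z_k$ in a way that leaves a complete intersection. Instead, the section of $\pi^*E$ defining the birational transform has its zero-scheme cut out by a twisted section $\sigma'$ of $\pi^*E \otimes \mathcal{O}(-m E')$ for an appropriate multiplicity $m$, where $E'$ is the exceptional divisor; showing that this $\sigma'$ is genuinely transverse (so that $M'$ is smooth of the right codimension) and that it is honestly a section of a \emph{bundle} — not merely that $M'$ is locally a complete intersection — is where the care is needed. This is exactly the point the authors emphasize as forcing the move from complete intersections to complete bundle-sections. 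I would carry out the verification chart by chart: in the charts with $k \le d$ one reduces to the normal-crossings case already handled by Lemma~\ref{bu.lems}(i), while in the charts with $d+1 \le k \le n$ the quadratic part of the defining section, after dividing by $z_k^2$, either becomes a unit (so $M'$ is smooth and the bundle-section equations restrict correctly) or produces the smooth quadric cone resolution. Assembling the local bundle data into a global vector bundle on $Y'$ via the projection formula and the identification of $\pi^* E$ twisted by the exceptional divisor then completes the proof that $M' \subset Y'$ is a smooth complete bundle-section.
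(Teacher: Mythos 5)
There is a genuine gap at the decisive step: the identification of the vector bundle on $Y'$ whose section cuts out $M'$. You propose that $M'$ is the transverse zero locus of a section of a uniform twist $\pi^*\mathcal{F}\otimes\mathcal{O}(-mE)$ of the pulled-back bundle by a multiple of the exceptional divisor (in your notation, $\pi^*E\otimes\mathcal{O}(-mE')$). No such $m$ exists once the rank $c$ of the bundle is at least $2$, which is exactly the case needed in the paper. The reason is that the components of the defining section $s$ vanish to different orders along $D$: the differential $ds:N_{D/Y}\to \mathcal{F}_{\mid D}$ has corank $1$, so $c-1$ components of $\pi^*s$ vanish to order exactly $1$ along $E$, while the component in the residual quotient line $\mathcal{L}={\rm coker}(ds)$ vanishes to order $2$. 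Concretely, in the local model $s=(z_{d+1},\dots,z_{d+c-1},\sum_{i\geq d+c}z_i^2)$ with blow-up chart $z_j=z_ky_j$, dividing by $z_k$ (i.e.\ $m=1$) leaves the last component equal to $z_k(1+\sum y_i^2)$, so the zero scheme is $M'$ together with the excess component $E\cap\{y_{d+1}=\cdots=y_{d+c-1}=0\}$ of the same dimension $n-c$ as $M'$; dividing by $z_k^2$ (i.e.\ $m=2$) makes the first $c-1$ components non-regular. Hence the chart-by-chart computation you describe produces local equations that do not glue into a section of any twist of $\pi^*\mathcal{F}$, and ``assembling the local bundle data via the projection formula'' does not supply the missing global object.

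The paper resolves this with an elementary modification rather than a twist: it sets $\mathcal{F}':=\ker\bigl(\pi^*\mathcal{F}(-E)\to\pi_E^*\mathcal{L}(-E)\bigr)$, where $\mathcal{L}$ is the corank-$1$ quotient of $\mathcal{F}_{\mid D}$ by the image of $ds$. This $\mathcal{F}'$ is locally free, agrees with $\pi^*\mathcal{F}(-E)$ away from $E$, imposes order-$2$ vanishing along $E$ precisely in the $\mathcal{L}$-direction, receives $\pi^*s$ as a section $s'$, and the zero locus of $s'$ is exactly $M'$. This elementary-modification construction is the essential content of the lemma --- it is exactly why the induction must be run with complete bundle-sections rather than complete intersections --- and it is the idea missing from your proposal; your local analysis is a reasonable way to verify the claim once $\mathcal{F}'$ has been defined, but it cannot substitute for defining it. (A minor further point: since the blow-up center here is $D$ itself, every chart has $d+1\leq k\leq n$; there are no charts with $k\leq d$ in which to invoke the normal-crossings case of Lemma \ref{bu.lems}.)
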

\begin{proof}
 We know  that $M\subset Y$ is the zero-set of a transverse section $s$ of a vector bundle $\mathcal{F}$ on $Y$. Let $E$ be the exceptional divisor of the blow-up map $\pi:Y'\rightarrow Y$.  We construct  a vector bundle $\mathcal{F}'$ on  $Y'$ by modifying $\pi^*\mathcal{F}(-E)$ along $E$. As $D\subset M$, the section $s$ vanishes along $D$ and has a differential
$$ds: N_{D/Y}\rightarrow \mathcal{F}_{\mid D}.$$
This differential has corank $1$, as follows from the fact that $M$ is singular with hypersurface singularities along $D$. We thus have a quotient line bundle $\mathcal{L}$ of $\mathcal{F}_{\mid D}$, and denoting $\pi_{E}:E\rightarrow D$ the restriction of $\pi $ to $E$, we get a quotient map constructed as the composition
$$q:\pi^*\mathcal{F}(-E)\rightarrow \pi_{E}^*\mathcal{F}_{\mid D}(-E)\rightarrow \pi_{E}^*\mathcal{L}(-E).$$

We set

$${\mathcal{F}'}:={\rm Ker}\,q.$$
This is a vector bundle  on $Y'$. Furthermore, we observe that, by construction, the section $\pi^*s$ of $\pi^*\mathcal{F}$ provides  a section ${s'}$ of ${\mathcal{F}'}\subset \pi^*\mathcal{F}$. One then  checks, using the fact  that the  singularities of $M$ are ordinary quadratic along $D$, that the vanishing locus of ${s'}$ is exactly the proper transform $M'$.
\end{proof}

\medskip

We can now state the inductive forms of
Theorem~\ref{cibu.14.thm}. Consider the following statements \ref{cbs.1.prop} and \ref{cbs.2.prop} depending on dimension $d$.

\begin{say} {\bf Property ${\cbs}_d$}.\label{cbs.1.prop}
     Let $Z\subsetneq X\subset Y$ be smooth, projective  varieties, $\dim Z\leq d$ and $\dim Y>4d$.  Assume that $X\subset Y$ is a smooth complete bundle-section, and
  $\dim X<\frac12 \dim Y$. Let $Z\subset W^j\subset Y$ be  smooth subvarieties such that  $\dim W^j<\frac12 \dim Y$.

  Then there is a cbs blow-up sequence  $\Pi:Y_r\to\cdots\to Y_0:=Y$
  with centers $C_i\subset Y_i$, such that
  \begin{enumerate}
  \item   $\dim C_i<\dim Z$ for every $i$,
  \item  each $C_i$ has full intersection with $Z_i, X_i$ and the  $W^j_i$, and
  \item   $Z_r$ is a union of  irreducible components of a smooth complete bundle-section  $Z_r^*\subset X_r\subset Y_r$.
  \end{enumerate}
      \end{say}
\begin{rema}\label{remanewlabel} {\rm
    Note that we do not claim that $Z_r^*$ has
    full intersection with  the  $W^j_r$.
    In our construction,  $Z_r^*$ is essentially the birational transform of a   complete intersection $Z^*$ of the same dimension as $Z$, and  that contains  $Z$ as an irreducible component.
      We can thus guarantee that   $Z_r^*$ is in general position away from
      $\Pi^{-1}(Z)$. Note, however, that the blow-up sequence depends on  $Z^*$ in a complicated way,
      so it is unlikely that  we can guarantee that $Z_r^*$ is also in general position along  $\Pi^{-1}(Z)\setminus Z_r$. This will cause some difficulties in the proof below.}
      \end{rema}

\begin{say}{\bf Property ${\cbs}'_d$.} \label{cbs.2.prop}  Let $Z\subset X\subset Y$ be smooth, projective  varieties, with $\dim Z\leq d$  and $\dim Y>4d$.  Assume that $X\subset Y$ is a smooth complete bundle-section and that
  $\dim X<\frac12 \dim Y$, ${\rm dim}\,Z<{\rm dim}\,X$.
   Let $Z\subset W^j\subset Y$ be smooth  subvarieties such that  $\dim W^j<\frac12 \dim Y$.

  Then there is a cbs blow-up sequence  $Y_r\to\cdots\to Y_0:=Y$
  with centers $C_i\subset Y_i$, such that
  \begin{enumerate}
  \item   $\dim C_i<\dim Z$ for every $i$,
  \item  each $C_i$ has full intersection with $Z_i, X_i$ and the  $W^j_i$, and
  \item   there is a smooth complete bundle-section $X_{r}^{(1)}\subset Y_r$, such that
    $Z_{r}\subset X_{r}^{(1)}\subset X_{r}$ and
    $\dim X_{r}^{(1)}<\dim X_r$.
  \end{enumerate}
\end{say}
\begin{rema}{\rm    In the construction below, $X_{r}^{(1)}$ is  a subset of $X_r$ of codimension 1. If $\dim X\geq \dim Z+2$, then  every irreducible component of $X_r$ contains a unique irreducible component of $X_{r}^{(1)}$.
  If $\dim X=\dim Z+1$, then  $Z_{r}$ is a union of  irreducible components of $X_{r}^{(1)}$, but usually there are other irreducible components as well.}
\end{rema}

\begin{theo}\label{cbs.cbs.thm}
  ${\cbs}_d$ and ${\cbs}'_d$ hold for every $d$.
\end{theo}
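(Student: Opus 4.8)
The plan is to prove both statements simultaneously by induction on $d$, via two implications: the iteration $\cbs'_d\Rightarrow\cbs_d$ within a fixed $d$, and the descent $\cbs_{d-1}\Rightarrow\cbs'_d$. The base case $d=0$ is immediate: in $\cbs'_0$ one has $\dim Z=0<\dim X$, so a general sufficiently ample hypersurface section $M\subset X$ through the finite set $Z$ has, by Lemma~\ref{letroptropfacile}, singular locus of dimension $2\cdot 0-\dim X<0$, hence is smooth; realizing $M$ as $X\cap H_Y$ for a general ample $H_Y\supset Z$ in $Y$ exhibits it as a smooth complete bundle-section $X^{(1)}$, and no blow-up is needed.

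For the implication $\cbs'_d\Rightarrow\cbs_d$, if $\dim Z=\dim X$ then $Z$ is already a union of connected components of the smooth complete bundle-section $X$ and we finish with $r=0$. If $\dim Z<\dim X$, I would apply $\cbs'_d$ to descend from $X$ to a smooth complete bundle-section $X^{(1)}$ with $Z\subset X^{(1)}\subset X$ and $\dim X^{(1)}<\dim X$, and then iterate, each time taking the new ambient complete bundle-section to be $X^{(1)}$ and adjoining the previous ambient $X$ to the list of auxiliary subvarieties so that full intersection is preserved. Since $\dim X^{(1)}<\dim X<\tfrac12\dim Y$ at every stage, the hypotheses of $\cbs'_d$ remain in force, and after finitely many steps the ambient complete bundle-section reaches dimension $\dim Z$; its birational transform is then the desired $Z_r^*$, having $Z_r$ as a union of connected components.

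The heart of the argument is the descent $\cbs_{d-1}\Rightarrow\cbs'_d$. Given $Z\subset X$ with $\dim Z<\dim X$, I would choose a general sufficiently ample hypersurface section $M\subset X$ containing $Z$, realized as a complete bundle-section in $Y$ by writing $M=X\cap H_Y$ for a general ample $H_Y\supset Z$. By Lemma~\ref{letroptropfacile} applied to $Z\subset X$, $M$ has only ordinary double points along a smooth $D\subset Z$ of dimension $2\dim Z-\dim X<\dim Z$. When $\dim X>2\dim Z$ one has $D=\emptyset$, $M$ is already a smooth complete bundle-section, and $X^{(1)}:=M$ finishes the proof. Otherwise $\dim D\le d-1$, and I would invoke $\cbs_{d-1}$ for $D$ (inside $X\subset Y$, with $Z,X,M$ and the $W^j$ adjoined as auxiliary subvarieties) to realize, after a cbs blow-up sequence with centers of dimension $<\dim D<\dim Z$, the birational transform of $D$ as a union of connected components of a smooth complete bundle-section $D^*$, chosen so that $D^*\setminus D$ is disjoint from $Z$ and in general position with respect to $M$. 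Taking these auxiliary centers in full intersection with $D$ and in normal crossings with $M\setminus D$, Lemma~\ref{bu.lems}(iv) keeps $M$ of ordinary-double-point type along the transform of $D$ throughout. Since $D^*\cap Z=D$ is a union of components of $D^*$, blowing up $D^*$ is a legitimate cbs step in full intersection with $Z$; along $D\subset M$ it resolves $M$ by Lemma~\ref{subtle.bu.lem}, while along $D^*\setminus D$ it is the transverse case of Lemma~\ref{bu.lems}(i). The outcome is a smooth complete bundle-section $M'=X^{(1)}$ with $Z_r\subset X^{(1)}\subset X_r$ and $\dim X^{(1)}=\dim X-1$.

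The main obstacle is precisely this resolution of the singularities of $M$ along $D$ without damaging $Z$. Two points require care. First, although $D\subset Z$ automatically has full intersection with $Z$, one must arrange \emph{simultaneously} that the auxiliary centers supplied by $\cbs_{d-1}$ keep full intersection with $Z,X$ and the $W^j$, have dimension $<\dim Z$, and preserve the ordinary-double-point type of $M$ — the last via the normal-crossings hypothesis of Lemma~\ref{bu.lems}(iv), which is not literally part of the conclusion of $\cbs_{d-1}$ and must be secured by a general-position choice inside the full-intersection framework. Second, as flagged in Remark~\ref{remanewlabel}, $M$ is controlled as the birational transform of a fixed complete intersection only away from $\Pi^{-1}(Z)$; along $\Pi^{-1}(Z)\setminus Z_r$ its singularities are governed by the earlier blow-ups in a nontransparent way, so the verification that $M$ has ordinary double points exactly along the transform of $D$ and is smooth elsewhere is the delicate technical core of the proof.
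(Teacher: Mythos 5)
Your overall architecture --- induction on $d$ via ${\cbs}'_d\Rightarrow{\cbs}_d$ and ${\cbs}_{d-1}\Rightarrow{\cbs}'_d$ with base case $d=0$, the iteration of ${\cbs}'_d$ to descend to a complete bundle-section of dimension $\dim Z$, and the singular hyperplane section $M=X\cap H$ with ordinary double points along $D\subsetneq Z$ resolved by blowing up a smooth cbs $D^*\supset D$ supplied by ${\cbs}_{d-1}$, using Lemmas \ref{bu.lems} and \ref{subtle.bu.lem} --- coincides with the paper's. However, there is a genuine gap at exactly the point you flag as ``the delicate technical core'': you assert that $D^*$ can be ``chosen so that $D^*\setminus D$ is disjoint from $Z$ and in general position with respect to $M$,'' to be ``secured by a general-position choice inside the full-intersection framework.'' This is precisely what \emph{cannot} be secured by genericity: as Remark \ref{remanewlabel} explains, $D^*_r$ is the birational transform of an auxiliary complete intersection whose behaviour along the preimage of $D$ depends on the already-performed blow-ups in an uncontrolled way, so one cannot arrange that the extra components of $D^*_r$ avoid $X_r$, $H_r$ and the $W^j_r$ there. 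Without full intersection of $D^*_r$ with these, the blow-up of $D^*_r$ is not a legitimate step of the induction and Lemma \ref{bu.lems}(iv) need not apply, so the ordinary-double-point structure of $M_r$ may be destroyed. Identifying the obstacle is not the same as overcoming it, and your proof stops short of the one idea that does.

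The paper's device is a containment argument rather than a transversality argument: before invoking ${\cbs}_{d-1}$, choose an auxiliary general complete intersection $\bar X\subset Y$ containing $D$ with $\dim \bar X<\frac12\dim Y$, arranged (using $\dim X<\frac12\dim Y$ and $\dim W^j<\frac12\dim Y$) so that $\bar X\cap X=D$ and $\bar X\cap W^j=D$ scheme-theoretically. Then apply ${\cbs}_{d-1}$ to $D\subset\bar X\subset Y$, adjoining the original $X$ to the auxiliary list as $\bar W^0$. The conclusion of ${\cbs}_{d-1}$ places $\bar D^*_r$ inside $\bar X_r$, whence $\bar D^*_r\cap X_r\subset\bar X_r\cap X_r\subset D_r$ and likewise for the $W^j_r$; the unwanted components $\bar D^*_r\setminus D_r$ are therefore automatically disjoint from $X_r$ and the $W^j_r$, and full intersection holds for free. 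This is the step your proposal is missing, and it is also where the hypotheses $\dim Y>4d$ and $\dim W^j<\frac12\dim Y$ are actually consumed.
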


For the proof,   we use  induction on $d$, and  show that
$
{\cbs}_{d-1} \Rightarrow {\cbs}'_d  \Rightarrow {\cbs}_d.
$
Note that ${\cbs}_0$ is clear.

\medskip
\begin{proof}[Proof of  ${\cbs}'_d  \Rightarrow {\cbs}_d$]
 Since $\dim Z<\frac14 \dim Y$, there exists  by Lemma \ref{letroptropfacile} a smooth complete intersection $X\subset Y$ containing $Z$, such that $\dim X<\frac12 \dim Y$.

  We are done if $\dim Z=\dim X$. Otherwise,
  using Property ${\cbs}'_d$  for $d=\dim Z$,
    there is a
smooth cbs blow-up sequence
  $Y_{r_1}\to \cdots \to  Y_0= Y$, whose centers
have full intersections with $Z, X$, and a
smooth complete bundle-section $X_{r_1}^{(1)}\subset Y_{r_1}$ such that
$Z_{r_1}\subset X_{r_1}^{(1)}\subset X_{r_1}\subset Y_{r_1}$ and
$\dim X_{r}^{(1)}<\dim X_r$.

We now  replace  $Z\subset X\subset Y$  by
$Z_r\subset X_r^{(1)}\subset Y_r$ and repeat the argument to get
$$
Z_{r_i}\subset X_{r_i}^{(i)}\subset X_{r_i}^{(i-1)}\subset Y_{r_i}, \qtq{for} i=2,\dots
$$
With each step we lower the dimension of the  smooth complete bundle-section
$X_{r_i}^{(i)}\subset Y_{r_i}$, until we reach
$$
Z_{r_m}\subset X_{r_m}^{(m)}\subset Y_{r_m},
$$
such that $\dim Z_{r_m}=\dim X_{r_m}^{(m)}$. \end{proof}

\begin{proof}[Proof of  ${\cbs}_{d-1} \Rightarrow {\cbs}'_d$]
Take a general hypersurface
$Z\subset H\subset Y$. Then  $H\cap X$  has ordinary double points along some smooth
$D\subsetneq Z$ by Lemma \ref{letroptropfacile}. We apply ${\cbs}_{d-1}$ to $D$ to get   $D_r$ which is a union of
irreducible components of a smooth cbs  $D^*_r$.
Next we should blow up $D^*_r$.  Using  Lemma \ref{bu.lems}(iii) we get that
the birational transform of $(H\cap X)_r$ is now smooth over $D_r$, and it is a  complete bundle-section  by Lemma~\ref{subtle.bu.lem}.
However, we also need to guarantee that the other components
$D^*_r\setminus D_r$  have full intersection with  $H_r$ and $X_r$.
As we noted in  Remark~\ref{remanewlabel}, this is not clear.

We go around this problem by
creating an auxiliary  general complete intersection
$\bar X \subset Y$   that contains $D$ and has   dimension $<\frac12 \dim Y$.
Using  $\dim W^j<\frac12 \dim Y$, we can achieve  that
$\bar X\cap X=D$ and
$\bar X\cap W^j=D$ for every $j$, scheme theoretically.

Now we apply ${\cbs}_{d-1}$ to $\bar D:=D\subset \bar X\subset Y$
 and
$\bar W^j:=W^j$,  with the original $X$ playing the role of a new $\bar W^0$.
We then get  $\bar D^*_r\supset \bar D_r$, which is contained in
$\bar X_r$.  In particular,
$$
\bar D^*_r\cap \bar W^j_r\subset \bar X_r\cap \bar W^j_r\subset  \bar D_r=D_r.
$$
For $j=0$ this gives that $\bar D^*_r\cap X_r\subset D_r$.
%% Since $X_r=\bar W^0_r$, we see that $X_r\cap \bar X_r=\bar D_r=D_r$.
%% In particular, the intersections of $\bar D^*_r$ with
%% $D_r, X_r$ and $W^j_r$ are all contained in  $\bar D_r=D_r$.
Thus
$\bar D^*_r\setminus \bar D_r$ is disjoint from $X_r$ and the $W^j_r$, hence
 $\bar D^*_r$ has full intersections with   $Z_r, X_r$ and $W^j_r$, as needed.
\end{proof}

\begin{proof}[Proof of Theorem~\ref{cibu.14.thm}]
Since $\dim Z<\frac14 \dim Y$, there is a smooth complete intersection
$Z\subset X\subset Y$ such that $\dim X<\frac12 \dim Y$.
We can now apply Property~${\cbs}_d$ stated in  \ref{cbs.1.prop}  with $W^j=\emptyset$;
the latter is shown to hold in Theorem~\ref{cbs.cbs.thm}.
\end{proof}

 \section{The case of  homogeneous  varieties \label{sechomogeneous}}
 This section is devoted to the case of cycles on homogeneous varieties, for which stronger results are available.
 \subsection{Cycles on abelian varieties \label{sectheoab}}
We start with the proof of Theorem \ref{theoab}(ii).  The result in this case  is  stronger than Theorem \ref{theo0cyclesflpsh} since it states that \begin{eqnarray}\label{eqnewpourabsm} {\rm CH}_d(A)={\rm CH}_d(A)_{\rm sm_*Ch}\end{eqnarray} for any abelian variety $A$ and any integer $d$.
 \begin{proof} [Proof of Theorem \ref{theoab}(ii)]  Let $z\in{\rm CH}_d(A)$. We want to prove that
 \begin{eqnarray}\label{eq26sept} z\in{\rm CH}_d(A)_{\rm sm_*Ch}. \end{eqnarray}
 We can assume $z=[Z]$ for some subvariety $Z\subset A$ of dimension $d$. We denote by $\tau:\widetilde{Z}\rightarrow A$   a desingularization of $Z$. Consider the morphism
 $$\phi: A\times  \widetilde{Z}\rightarrow A$$
 $$ (x,\tilde{z})\mapsto x+\tau(\tilde{z}).$$
 Obviously $\phi$ is smooth, since it is $A$-equivariant. Furthermore, we have, denoting $0_A\in A$ the origin
\begin{eqnarray}\label{eqpourzab} z=\phi_*([0_A\times \widetilde{Z}]).
\end{eqnarray}
If $[0_A]\in{\rm CH}_0(A)$ belongs to the subring ${\rm CH}^*(A)_{\rm Ch}$ of ${\rm CH}^*(A)$ which is generated by Chern classes of coherent sheaves  on $A$, so does ${\rm pr}_1^*([0_A])=[0_A\times \widetilde{Z}]\in {\rm CH}(A\times  \widetilde{Z})$, hence (\ref{eqpourzab}) implies (\ref{eq26sept}) in this case.  It is proved however  in \cite{debarre}  that for a very general abelian variety $A$ with sufficiently divisible polarization degree and high dimension, the class of a point does not belong to ${\rm CH}^*(A)_{\rm Ch}$, so we cannot apply the argument directly to $A$.  Nevertheless, Debarre also proves in {\it loc. cit.}  that, if $J$ is the Jacobian of a curve $C$ of genus $g$, then for  any point $x$ of $J$, there exists a rank $g$ vector bundle on $J$ with a section whose zero locus is  $\{x\}$ (with its reduced structure).  In particular, the class $[x]$  belongs to  ${\rm CH}^*(J)_{\rm Ch}$ (a result that was also proved by Mattuck in \cite{mattuck}).
Let now $j:C\hookrightarrow  A$ be the inclusion of a smooth curve of genus $g$ which is a complete intersection of ample hypersurfaces in $A$. Then by Lefschetz theorem on hyperplane sections, we have a surjective (hence smooth) morphism $\psi=j_*:J=JC\rightarrow A$ of abelian varieties, and $\psi_*([0_J])=[0_A]$. Let
$$\phi_J:J\times \widetilde{Z}\rightarrow A$$
be the composite $\phi\circ (\psi,Id)$. Then $\phi_J$ is smooth and we have
\begin{eqnarray}\label{eqpourzab2} z=\phi_{J*}([0_J\times \widetilde{Z}]).
\end{eqnarray}
As $[0_J]$ belongs to  ${\rm CH}^*(J)_{\rm Ch}$, $[0_J\times \widetilde{Z}]$ belongs to  ${\rm CH}^*(J\times \widetilde{Z})_{\rm Ch}$, so (\ref{eq26sept}) follows from  formula (\ref{eqpourzab2}).
 \end{proof}
 We easily get the following consequence (we refer to the introduction for the definition of ``strongly smoothable"):

\begin{theo}\label{theopourabstrogn} (i) Let $A$ be an abelian variety of dimension $g$. Then for any integer $d$ such that $2d<g$, cycles $z\in {\rm CH}_d(A)$ are strongly smoothable.

(ii) Let $X$ be a smooth projective variety of dimension $n$ and $W_i\subset X$ be a finite set of smooth subvarieties. Then if $2d<n$, any cycle $z\in {\rm CH}_d(X)$ is an integral combination of classes of smooth subvarieties which intersect all $W_i$ in a proper way.
\end{theo}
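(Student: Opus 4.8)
The plan is to deduce both parts from the explicit smooth (resp.\ flat) pushforward presentations already available, combined with a single moving step in which the general position of Proposition~\ref{letranswithflat} is imposed \emph{simultaneously} with transversality (resp.\ proper intersection) to the pullbacks of the $W_i$. For part (i) I would start from the proof of Theorem~\ref{theoab}(ii), which presents $z=[Z]$ as $z=\phi_{J*}([0_J\times\widetilde Z])$ with $\phi_J\colon J\times\widetilde Z\to A$ smooth and $[0_J\times\widetilde Z]\in{\rm CH}(J\times\widetilde Z)_{\rm Ch}$. Applying Lemma~\ref{propoursegrechern} I would rewrite this as $z=f_*w$, where $f\colon P\to A$ is a smooth proper morphism, $P$ is smooth, and $w$ is a $d$-dimensional combination of intersections of divisors on $P$. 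The decisive gain in having $f$ \emph{smooth} is that each $f^{-1}(W_i)\subset P$ is then itself smooth.

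I would then represent $w$ by a cycle $\sum_j n_j[Z'_j]$ in which every $Z'_j$ is a smooth complete intersection of general very ample divisors on $P$. By Remark~\ref{rema1} such a general complete intersection is in general position in the sense of Proposition~\ref{letranswithflat} relative to $f$; and by Bertini it may at the same time be taken transverse to each of the finitely many smooth subvarieties $f^{-1}(W_i)$, since each requirement is a dense open condition on the linear systems involved and finitely many such conditions are compatible. Because $2d<g$, Proposition~\ref{letranswithflat} then shows that $f|_{Z'_j}\colon Z'_j\to f(Z'_j)$ is an isomorphism onto a smooth $d$-dimensional subvariety, so that $z=\sum_j n_j[f(Z'_j)]$. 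Transversality of $Z'_j$ with $f^{-1}(W_i)$ makes $Z'_j\cap f^{-1}(W_i)$ smooth of dimension $d+{\rm dim}\,W_i-g$, and the injectivity of $f|_{Z'_j}$ gives a scheme-theoretic identification $f(Z'_j)\cap W_i=f\bigl(Z'_j\cap f^{-1}(W_i)\bigr)\cong Z'_j\cap f^{-1}(W_i)$. Hence each $f(Z'_j)$ meets every $W_i$ in a smooth, proper intersection, which is precisely strong smoothability.

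For part (ii) I would run the same argument, but starting from the flat presentation underlying Theorems~\ref{theo0cyclesflpsh} and~\ref{conjsmoothing}: there one has $z=f_*w$ with $f\colon Y\to X$ merely flat and proper, $Y$ smooth, and $w$ a combination of intersections of divisors of dimension $d$. Flatness still guarantees that each $f^{-1}(W_i)$ has pure codimension equal to ${\rm codim}_X\,W_i$ in $Y$. Choosing $Z'_j$ to be a general complete intersection that is both in general position for Proposition~\ref{letranswithflat} and meets each $f^{-1}(W_i)$ properly (again a dense open condition), I would conclude from $2d<n$ and the injectivity of $f|_{Z'_j}$ that the $f(Z'_j)$ are smooth and that $f(Z'_j)\cap W_i\cong Z'_j\cap f^{-1}(W_i)$ has the expected dimension $d+{\rm dim}\,W_i-n$. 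This yields proper intersection with all the $W_i$, and $z=\sum_j n_j[f(Z'_j)]$.

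The main obstacle, and the reason (i) is stronger than (ii), is the passage from \emph{proper} to \emph{smooth} intersection. This rests entirely on the smoothness of $f$ in the abelian variety case: only then is $f^{-1}(W_i)$ smooth, so that Bertini transversality of the moving cycle with $f^{-1}(W_i)$ upgrades, via the embedding $f|_{Z'_j}$, to smoothness of $f(Z'_j)\cap W_i$. For a general $X$ the pushforward furnished by Theorem~\ref{theo0cyclesflpsh} is only flat, $f^{-1}(W_i)$ can be singular, and the best the moving step can deliver is the correct dimension of the intersection. A secondary point to check, though routine, is that the infinitesimal general position hypotheses of Proposition~\ref{letranswithflat} and the transversality (resp.\ properness) to the fixed subvarieties $f^{-1}(W_i)$ are met by one and the same general complete intersection; this follows since each is the complement of a proper closed subset in the relevant parameter space.
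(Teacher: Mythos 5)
Your proposal is correct and follows essentially the same route as the paper: both parts are reduced to a presentation $z=f_*z'$ with $f$ smooth (resp.\ flat) and $z'$ an integral combination of intersections of divisors, the representing complete intersections are moved into general position simultaneously for Proposition~\ref{letranswithflat} and for the preimages $f^{-1}(W_i)$, and $2d<n$ then yields smooth images meeting the $W_i$ smoothly (resp.\ properly). The only cosmetic difference is at the last step of (i): the paper's Lemma~\ref{lesupstrong} reapplies Proposition~\ref{letranswithflat} to the restricted smooth morphisms $f_{\mid f^{-1}(W_i)}\colon f^{-1}(W_i)\to W_i$ using $2(d-c_i)<\dim W_i$, whereas you deduce smoothness of $f(Z'_j)\cap W_i$ directly from Bertini transversality inside $Y$ together with the isomorphism $f_{\mid Z'_j}$; both are valid.
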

Statement (i) follows from the equality (\ref{eqnewpourabsm}) proved above and from the following
\begin{lemm}\label{lesupstrong} Let $X$ be smooth projective of dimension $n$. Then, if $2d<n$, cycles in
${\rm CH}_d(X)_{\rm sm_*Ch}$ are strongly smoothable.
\end{lemm}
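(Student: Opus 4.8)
The plan is to unwind the definition of ${\rm CH}_d(X)_{\rm sm_*Ch}$ so as to reduce to a single generator, and then to rerun the general position argument of Proposition \ref{letranswithflat} while imposing one extra transversality condition for each $W_i$. First I would reduce to a generator of the form $z=\pi_*(D_1\cdots D_c)$, where $\pi:P\rightarrow X$ is a smooth proper morphism with $P$ smooth projective of dimension $m$, the $D_i$ are divisor classes on $P$, and $c=m-d$; this reduction is legitimate by Definition \ref{defiflpshch} together with Remark \ref{remaintdiv} and Lemma \ref{propoursegrechern}, using that a composite of smooth proper morphisms is again smooth proper. Writing each $D_i$ as a difference $A_i-B_i$ of very ample divisor classes and expanding the product, it then suffices to treat a cycle of the form $\pm\,\pi_*[Z]$, where $Z\subset P$ is a complete intersection of $c$ very ample divisors, hence of dimension $d$, and where $Z$ is to be chosen general.

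Next I would choose $Z$ general enough to satisfy three simultaneously open conditions: (a) $Z$ is smooth, by Bertini; (b) $Z$ is in general position in the sense required by Proposition \ref{letranswithflat}, which holds for a general complete intersection of very ample divisors by Remark \ref{rema1}; and (c) $Z$ meets each of the finitely many smooth subvarieties $\pi^{-1}(W_i)\subset P$ transversally, again by Bertini, where $\pi^{-1}(W_i)$ is smooth because $\pi$ is smooth. Since $2d<n={\rm dim}\,X$ and $\pi$ is flat, Proposition \ref{letranswithflat} then guarantees that $\pi_{\mid Z}:Z\rightarrow\pi(Z)$ is an isomorphism onto a smooth closed subvariety $\pi(Z)\subset X$. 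Thus $z$ is rationally equivalent to a signed sum of classes of such smooth $\pi(Z)$, which already recovers ordinary smoothability; the remaining task is to check the transversality with the $W_i$.

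The key step, and the only one not directly quoted from earlier results, is to verify that transversality of $Z$ with $\pi^{-1}(W_i)$ upstairs descends to transversality of $\pi(Z)$ with $W_i$ downstairs. For a point $z\in Z\cap\pi^{-1}(W_i)$ with image $x=\pi(z)$, smoothness of $\pi$ gives that $d\pi_z:T_{P,z}\rightarrow T_{X,x}$ is surjective with $T_{\pi^{-1}(W_i),z}=d\pi_z^{-1}(T_{W_i,x})$, while Proposition \ref{letranswithflat} gives that $\pi_{\mid Z}$ is an immersion, so that $d\pi_z(T_{Z,z})=T_{\pi(Z),x}$. Applying $d\pi_z$ to the transversality relation $T_{Z,z}+T_{\pi^{-1}(W_i),z}=T_{P,z}$ and using surjectivity of $d\pi_z$ yields $T_{\pi(Z),x}+T_{W_i,x}=T_{X,x}$, so $\pi(Z)$ meets $W_i$ transversally; since $\pi_{\mid Z}$ is an isomorphism, the identification $\pi(Z)\cap W_i\cong Z\cap\pi^{-1}(W_i)$ exhibits this intersection as smooth of the expected dimension. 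Summing over all generators of $z$ and over the terms of the expansions then presents $z$ as an integral combination of classes of smooth subvarieties, each having a proper smooth intersection with every $W_i$, which is exactly strong smoothability. I expect this tangent-space descent to be the main obstacle, as it relies on simultaneously using surjectivity of $d\pi$ (from smoothness of $\pi$) and the fact that $\pi$ restricts to an embedding on $Z$ (from $2d<n$); once these are in place the verification is a routine diagram chase, and the compatibility of the two genericity requirements in step (b)--(c) follows since each is an open dense condition on the chosen divisors.
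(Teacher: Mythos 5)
Your proposal is correct and follows the same overall strategy as the paper: reduce via Lemma \ref{propoursegrechern} and multilinearity to the class of a general complete intersection $Z$ of very ample divisors on the source $P$ of the smooth proper morphism, invoke Remark \ref{rema1} and Proposition \ref{letranswithflat} to get that $\pi(Z)$ is smooth, and then deal with the $W_i$ upstairs via the smooth preimages $\pi^{-1}(W_i)$. The only place where you diverge is the last verification: the paper establishes smoothness of $\pi(Z)\cap W_i$ by applying Proposition \ref{letranswithflat} a second time, namely to the smooth morphism $\pi_{\mid \pi^{-1}(W_i)}:\pi^{-1}(W_i)\rightarrow W_i$ and the general-position subvariety $Z\cap\pi^{-1}(W_i)$ of dimension $d-c_i$, the relevant numerical hypothesis being $2(d-c_i)<n-c_i={\rm dim}\,W_i$, which follows from $2d<n$; you instead impose Bertini transversality of $Z$ with $\pi^{-1}(W_i)$ and push the tangent-space decomposition down through the surjection $d\pi_z$. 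Both arguments are valid; yours yields the slightly stronger conclusion that $\pi(Z)$ meets $W_i$ transversally (not merely that the intersection is proper and smooth), at the cost of the extra descent computation, while the paper's reuse of Proposition \ref{letranswithflat} makes the dimension count $2(d-c_i)<{\rm dim}\,W_i$ the visible reason the Whitney-type condition propagates to the intersections.
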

\begin{proof} A cycle $z\in {\rm CH}_d(X)_{\rm sm_*Ch}$ is of the form
$$z=f_* z'$$
where $f:Y\rightarrow X$ is smooth projective and $z'$ belongs to the subring of ${\rm CH}^*(Y)$ generated by divisor classes. Let $W_i\subset X$ be a finite number of smooth subvarieties of codimension $c_i$. As $f$ is smooth, the inverse images $f^{-1}(W_i)$ are smooth of  codimension $c_i$.
The  cycle $z'$ is a combination with integral coefficients of classes of  general complete intersections $Z''$ of very ample hypersurfaces, which are smooth  in general position and such that their intersections $Z''\cap f^{-1}(W_i)$ are  also smooth  in general position, and of dimension $d-c_i$. If $2d <n$, then $2(d-c_i)< n-c_i={\rm dim}\,W_i$. Hence Proposition \ref{letranswithflat} applies to the smooth morphisms $f_{\mid f^{-1}(W_i)}\rightarrow W_i$, showing that $f(Z''\cap f^{-1}(W_i))=f(Z'')\cap W_i$ is smooth of dimension $d-c_i$.
\end{proof}

The proof of statement (ii) follows from the equality (\ref{eqzeroflpsh}) and the following
\begin{lemm}\label{lastle} Let $Y,\,X$ be smooth projective and $f:Y\rightarrow X$ be a flat morphism, and let $z=f_* z'$,
where  $z'$ belongs to the subring of ${\rm CH}^*(Y)$ generated by divisor classes. Let $W_i\subset X$ be a finite number of  subvarieties of codimension $c_i$. Then, if $2d<n$, $z$ is represented by a  cycle of smooth subvarieties  which intersect each $W_i$ in a proper way.
\end{lemm}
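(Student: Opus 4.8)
The plan is to mirror the proof of Lemma~\ref{lesupstrong}, but since $f$ is only flat and the $W_i$ need not be smooth, I will extract the weaker conclusion of proper (rather than transverse) intersection from a dimension count instead of from a second application of Proposition~\ref{letranswithflat}. By linearity of the conclusion I may assume that $z'=[Z'']$ is the class of a single complete intersection of $m-d$ very ample hypersurfaces on $Y$, where $m=\dim Y$, so that $Z''$ is a general member of such a linear system and $z=f_*[Z'']$. The key input from flatness is that, for each $i$, the preimage $f^{-1}(W_i)\subset Y$ is of pure codimension $c_i$, hence of dimension $m-c_i$, since $f$ is a flat morphism between smooth varieties (here $n=\dim X$).

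First I would choose $Z''$ general enough to satisfy simultaneously two open dense conditions: (a) $Z''$ is in general position in the sense required by Proposition~\ref{letranswithflat}, which holds for a general complete intersection of very ample divisors by Remark~\ref{rema1}; and (b) $Z''$ meets each of the finitely many closed subsets $f^{-1}(W_i)$ properly, i.e.
\[
\dim\bigl(Z''\cap f^{-1}(W_i)\bigr)=d-c_i
\]
(with the convention that the intersection is empty when $d-c_i<0$). Condition (b) is a standard Bertini / general-position statement about cutting the fixed subvariety $f^{-1}(W_i)$ of dimension $m-c_i$ by $m-d$ general very ample hypersurfaces, and is compatible with (a) since each holds on a Zariski-dense open subset of the linear system. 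As $2d<n$ and $Z''$ is in general position, Proposition~\ref{letranswithflat} applies to $Z''$ and the flat morphism $f$ (the restriction $f|_{Z''}$ being proper as a morphism of projective varieties), giving that $f|_{Z''}:Z''\to f(Z'')$ is an isomorphism; in particular $f(Z'')\subset X$ is smooth of dimension $d$.

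It remains to read off the proper intersection with the $W_i$. Because $f|_{Z''}$ is injective with image $f(Z'')$, for each $i$ one has the set-theoretic identity
\[
f(Z'')\cap W_i=f\bigl(Z''\cap f^{-1}(W_i)\bigr),
\]
so this intersection has dimension $d-c_i$ by condition (b); since $f(Z'')$ has codimension $n-d$ and $W_i$ has codimension $c_i$, this is exactly the expected dimension, whence $f(Z'')$ meets each $W_i$ properly. Summing over the components of $z'$ then exhibits $z$ as an integral combination of classes of smooth subvarieties meeting each $W_i$ properly. The main point to verify is not any single step but the compatibility of the two genericity requirements (a) and (b): one must check that the transversality hypothesis feeding Proposition~\ref{letranswithflat}, which involves $f$ together with its diagonal and infinitesimal behaviour (cf.\ Lemma~\ref{lemtranswhitney}), can be imposed at the same time as proper meeting of the \emph{fixed} subvarieties $f^{-1}(W_i)$. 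This is harmless because each is a generic condition, but it is the step that genuinely uses flatness, through the pure-codimension statement for $f^{-1}(W_i)$.
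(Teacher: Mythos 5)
Your proof is correct and follows essentially the same route as the paper, which simply says the lemma ``is proved exactly as above'' (i.e.\ as Lemma~\ref{lesupstrong}) using that flatness forces $f^{-1}(W_i)$ to have codimension $c_i$: write $z'$ as a combination of general complete intersections $Z''$, apply Proposition~\ref{letranswithflat} to get smoothness of $f(Z'')$, and replace the transversality conclusion by the dimension count $\dim\bigl(Z''\cap f^{-1}(W_i)\bigr)=d-c_i$ to obtain proper intersection. Your explicit discussion of the compatibility of the two genericity conditions is a reasonable filling-in of details the paper leaves implicit.
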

Lemma \ref{lastle} is proved exactly as above, using the fact that, by flatness of $f$, the varieties $f^{-1}(W_i)\subset Y$ have codimension $c_i$.
 \subsection{More general homogeneous varieties \label{sectheohomog}}
 We prove in this section Theorem \ref{theoab}(i), which is the following statement
 \begin{theo} \label{theohomog} Let $X$ be a homogeneous variety under a group $G$.
  If  there exists a smooth projective $G$-equivariant completion $\overline{G}$ of $G$ which satisfies ${\rm CH}_0(\overline{G})= {\rm CH}_0(\overline{G})_{\rm sm_*Ch}$, then
 \begin{eqnarray}\label{eqnewdupourth2}  {\rm CH }_d(X)_{\rm sm_*Ch}={\rm CH}_d(X)
 \end{eqnarray}
 for all $d$.
  \end{theo}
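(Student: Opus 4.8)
The plan is to imitate the proof of Theorem~\ref{theoab}(ii), replacing the translation family $\phi\colon A\times\widetilde Z\to A$ by the ``action family'' attached to the transitive $G$-action on $X$, and to use the completion $\overline{G}$ only in order to make this family proper. Write $X=G/H$ with $H$ the stabilizer of a base point $x_0\in X$, and reduce by linearity to the case $z=[Z]$ for an integral subvariety $Z\subset X$ of dimension $d$. Choose a projective desingularization $\tau\colon\widetilde Z\to X$ of $Z$ and consider the morphism $\phi\colon G\times\widetilde Z\to X$, $(g,\tilde z)\mapsto g\cdot\tau(\tilde z)$. Since $G$ acts transitively on $X$, for fixed $\tilde z$ the orbit map $g\mapsto g\cdot\tau(\tilde z)$ is a smooth surjection onto $X$, so $\phi$ is smooth; moreover $\phi$ restricts to $\tau$ on $\{e_G\}\times\widetilde Z$, whence $\phi_*[\{e_G\}\times\widetilde Z]=[Z]=z$. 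As in the abelian case, we would like to read $[\{e_G\}\times\widetilde Z]$ as the pullback of the point class $[e_G]$ under the projection to the group and then invoke the hypothesis on $\overline{G}$. The only obstruction is that $G$ is not proper, so $\phi$ is not proper.

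To fix this I would compactify inside $\overline{G}$. The left $G$-action on the equivariant completion $\overline{G}$ induces an action $g\cdot(\bar h,\tilde z,x)=(g\bar h,\tilde z,gx)$ on $\overline{G}\times\widetilde Z\times X$, and a direct check shows that the graph $\Gamma_\phi=\{(g,\tilde z,g\cdot\tau(\tilde z))\}$ is $G$-invariant. Let $\overline{\Gamma}$ be its closure, a $G$-invariant projective subvariety, and let $\rho\colon\overline{\Gamma}'\to\overline{\Gamma}$ be a $G$-equivariant resolution of singularities, available in characteristic $0$. Set $\Phi:={\rm pr}_X\circ\rho\colon\overline{\Gamma}'\to X$ and $P:={\rm pr}_{\overline{G}}\circ\rho\colon\overline{\Gamma}'\to\overline{G}$. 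The key point is that $\Phi$ is $G$-equivariant with smooth projective source and homogeneous target, so $\overline{\Gamma}'\cong G\times_H\Phi^{-1}(x_0)$ is a fibre bundle over $X=G/H$; since $\overline{\Gamma}'$ and $X$ are smooth, the fibre $\Phi^{-1}(x_0)$ is smooth, and therefore $\Phi$ is a smooth proper morphism between smooth projective varieties.

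It remains to produce the cycle. Because the graph of a morphism to the separated variety $X$ is closed, $\overline{\Gamma}$ coincides with $\Gamma_\phi$ over the open orbit $G\subset\overline{G}$, and $\rho$ is an isomorphism there. Hence $\Sigma:=P^{-1}(e_G)$ equals $\{e_G\}\times\widetilde Z$ with its reduced structure, the morphism $P$ is smooth in a neighbourhood of $\Sigma$, so $[\Sigma]=P^*[e_G]$ in ${\rm CH}_d(\overline{\Gamma}')$, while $\Phi_*[\Sigma]=[Z]=z$ as computed above. Now the hypothesis gives $[e_G]\in{\rm CH}_0(\overline{G})_{\rm sm_*Ch}$; by Lemma~\ref{lebasicpi*}(i) the pullback $P^*[e_G]$ lies in ${\rm CH}(\overline{\Gamma}')_{\rm sm_*Ch}$, and since $\Phi$ is smooth, Remark~\ref{remastable} yields $z=\Phi_*\bigl(P^*[e_G]\bigr)\in{\rm CH}_d(X)_{\rm sm_*Ch}$, as desired.

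The main obstacle is the second step: the graph closure $\overline{\Gamma}$ is genuinely singular in general, since the fibres of $P$ over the boundary $\overline{G}\setminus G$ are closures of $H$-cosets in $\overline{G}$, which need not be smooth, so one cannot avoid the resolution $\rho$. The observation that makes the argument work is that smoothness of the pushforward map $\Phi$ is \emph{forced} by $G$-equivariance over the homogeneous space $X$, rather than by any control on the singularities of $\overline{\Gamma}$; and that $\rho$ is an isomorphism over the open orbit, so the identity $[\Sigma]=P^*[e_G]$ that feeds the hypothesis on $\overline{G}$ into the conclusion is untouched by the resolution. Note finally that this strategy produces only \emph{smooth} pushforwards, which is exactly why it gives the stronger statement ${\rm CH}_d(X)={\rm CH}_d(X)_{\rm sm_*Ch}$ unavailable by the general cbs method.
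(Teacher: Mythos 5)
Your proposal is correct and is essentially the paper's own argument: the paper also compactifies the action map $G\times\widetilde Z\to X$ via $G$-equivariant resolution of indeterminacies of $\overline{G}\times\widetilde Z\dashrightarrow X$ (which is the same as resolving your graph closure $\overline{\Gamma}$), uses $G$-equivariance over the homogeneous target to get smoothness of the resulting proper morphism, and pushes forward the pullback of $[e_G]$. The only cosmetic difference is that you invoke Lemma~\ref{lebasicpi*}(i) and Remark~\ref{remastable} to transport the hypothesis on $\overline{G}$, whereas the paper unwinds this by explicitly forming the fibre product $W\times_{\overline{G}}Y$ with the smooth family $W\to\overline{G}$ computing $[e_G]$ --- which is exactly the proof of that lemma specialized to this situation.
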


 \begin{proof}[Proof of Theorem \ref{theohomog}]   Let $z\in {\rm CH}_d(X)$ be the class of a subvariety $Z\subset X$ and let $\tau:\widetilde{Z}\rightarrow X$ be a desingularization of $Z$. We consider the morphism
 $$ f: G\times \widetilde{Z}\rightarrow X,$$
 $$(g,\tilde{z})\mapsto g\cdot \tau(\tilde{z}).$$
 The morphism $f$ is obviously smooth  since it is $G$-equivariant. It is however not proper, but it provides  a $G$-equivariant rational map
 \begin{eqnarray}\label{eqmorphet} F:\overline{G}\times \widetilde{Z}\dashrightarrow X,\end{eqnarray}
  where $\overline{G}$ is any smooth projective completion of $G$ on which $G$ acts (which exists by $G$-equivariant resolution of singularities \cite{kollarreso}).  The action of $G$ on the left  hand side of (\ref{eqmorphet}) is via its action on  $\overline{G}$.
 By $G$-equivariant resolution of indeterminacies \cite{reichtein}, there exist a smooth projective variety $Y$ on which $G$ acts, a $G$-equivariant birational morphism $\eta: Y\rightarrow \overline{G}\times \widetilde{Z}$, and a morphism
 $$\widetilde{F}: Y\rightarrow X,$$
 such that $F\circ \eta=\widetilde{F}$ as rational maps to $X$. The morphism $\widetilde{F}$ is proper since $Y$ is projective, and it is again smooth  because it is $G$-equivariant.  Furthermore we have
 \begin{eqnarray} \label{eqdu26aout} z= \widetilde{F}_*( \eta^*([e\times \widetilde{Z}]))\,\,{\rm in}\,\,{\rm CH}_d(X),
 \end{eqnarray}
 where $e\in G\subset \overline{G}$ is the neutral element. We choose now $\overline{G}$ as in Theorem \ref{theohomog}. Then there exist
 a smooth projective variety $W$ (nonnecessarily connected but that we can assume  equidimensional)   and a smooth  proper morphism $\phi: W\rightarrow  \overline{G}$ such that $e$  can be written as
 \begin{eqnarray} \label{eqpouredu 2-aout}  e=\phi_*(D_1\cdot\ldots \cdot D_N)\,\,{\rm in} \,\,{\rm CH}_0(\overline{G}),\,\,N={\rm dim}\,W,
 \end{eqnarray}
 for some divisors $D_l\in {\rm CH}^1(W)$.
 Let $Y':=W\times_{\overline{G}} Y$, with first projection $q: Y'\rightarrow W$ and  second projection $p: Y'\rightarrow Y$. Then $Y'$ is smooth projective and $p:Y'\rightarrow Y$ is smooth. Denoting $\widetilde{F}':= \widetilde{F}\circ p: Y'\rightarrow X$, $\widetilde{F}'$ is  also smooth. Moreover, by (\ref{eqdu26aout}) and (\ref{eqpouredu 2-aout}), $z$  can be written as
 \begin{eqnarray}\label{eqzequal}  z=\widetilde{F}'_*(  q^*D_{1}\cdot \ldots \cdot  q^*D_{N})\,\,{\rm in}\,\,{\rm CH}_d(X).\end{eqnarray}

 Formula  (\ref{eqzequal}) shows that
 $z\in {\rm CH}_d(X)_{\rm sm_*Ch}$.
 \end{proof}

    \end{document}